\documentclass[12pt,english,fleqn,liststotoc,bibtotoc,idxtotoc,BCOR7.5mm,tablecaptionabove]{amsart}
\setcounter{tocdepth}{4}
\setcounter{secnumdepth}{4}
\usepackage[T1]{fontenc}
\usepackage[latin1]{inputenc}
\usepackage{geometry}
\geometry{verbose,tmargin=2.2cm,bmargin=2.2cm,lmargin=2.2cm,rmargin=2.2cm}
\pagestyle{headings}
\usepackage{color}
\usepackage{amsthm}
\usepackage{amstext}
\usepackage{amssymb}
\usepackage{amsmath}
\usepackage{graphicx}
\usepackage{young}
\usepackage{xypic}
\usepackage{amscd}
\usepackage[section]{placeins}
\usepackage[svgnames]{xcolor}
\usepackage[unicode=true,
bookmarks=true,bookmarksnumbered=true,bookmarksopen=false,
breaklinks=false,pdfborder={0 0 1},backref=false,colorlinks=true]
{hyperref}
\hypersetup{pdftitle={},
	pdfauthor={Nick Early},
	pdfsubject={},
	pdfkeywords={},
	linkcolor=black, citecolor=black, urlcolor=blue, filecolor=blue,  pdfpagelayout=OneColumn, pdfnewwindow=true,  pdfstartview=XYZ, plainpages=false, pdfpagelabels}
\usepackage{breakurl}

\makeatletter



\theoremstyle{plain}
\newtheorem{thm}{\protect\theoremname}
\theoremstyle{plain}
\newtheorem{conjecture}[thm]{\protect\conjecturename}
\theoremstyle{plain}

\theoremstyle{remark}

\theoremstyle{plain}
\newtheorem{lem}[thm]{\protect\lemmaname}
\theoremstyle{plain}
\newtheorem{prop}[thm]{\protect\propositionname}
\theoremstyle{remark}

\theoremstyle{remark}
\newtheorem{rem}[thm]{\protect\remarkname}
\theoremstyle{definition}
\newtheorem{defn}[thm]{\protect\definitionname}
\theoremstyle{definition}
\newtheorem{example}[thm]{\protect\examplename}
\theoremstyle{plain}
\newtheorem{cor}[thm]{\protect\corollaryname}
\theoremstyle{plain}

\numberwithin{thm}{section}


\usepackage{ifpdf}
\ifpdf

\IfFileExists{lmodern.sty}
{\usepackage{lmodern}}{}

\fi 

\usepackage{multicol}


\newenvironment{nouppercase}{%
	\renewcommand{\uppercasenonmath}[1]{}}{}

\makeatother

\providecommand{\claimname}{\inputencoding{latin9}Claim}
\providecommand{\conjecturename}{\inputencoding{latin9}Conjecture}
\providecommand{\corollaryname}{\inputencoding{latin9}Corollary}
\providecommand{\definitionname}{\inputencoding{latin9}Definition}
\providecommand{\examplename}{\inputencoding{latin9}Example}
\providecommand{\lemmaname}{\inputencoding{latin9}Lemma}
\providecommand{\notename}{\inputencoding{latin9}Note}
\providecommand{\propositionname}{\inputencoding{latin9}Proposition}
\providecommand{\questionname}{\inputencoding{latin9}Question}
\providecommand{\remarkname}{\inputencoding{latin9}Remark}
\providecommand{\theoremname}{\inputencoding{latin9}Theorem}
\providecommand{\problemname}{\inputencoding{latin9}Problem}

\newcommand{\symm}{\mathfrak{S}}

\newcommand\twoheaduparrow{\mathrel{\rotatebox{90}{$\twoheaduparrow$}}}
\newcommand\twoheaddownarrow{\mathrel{\rotatebox{270}{$\twoheaddownarrow$}}}

\makeatletter
\let\@wraptoccontribs\wraptoccontribs
\makeatother

\begin{document}
	\author{Nick Early}
	\contrib[with an appendix by]{Donghyun Kim}
		\thanks{Institute for Advanced Study. \\
		email: \href{mailto:earlnick@ias.edu}{earlnick@ias.edu}}
	\title[Honeycomb Tessellations and Graded Permutohedral Blades]{Honeycomb Tessellations and Graded Permutohedral Blades}
\begin{nouppercase}
	\maketitle
\end{nouppercase}
	\begin{abstract}
	This paper investigates enumerative aspects of permutohedral blades, which provide a generalization of the notion of the tropical hyperplane arrangement.  Blade provide the combinatorial underpinning of generalized biadjoint scalar scattering amplitudes in work of Cachazo, Early, Guevara and  Mizera (CEGM).  We construct a graded basis for a vector space of indicator functions of blades, with the grading determined by the dimension of the support of the function.  We prove a Minkowski sum decomposition law into lines and tripods and we explore connections to the cohomology ring of certain moduli spaces and a non-planar analog of the square move for plabic graphs.
	
	We give a closed formula for the graded dimension of the basis.  It is shown in an Appendix by Donghyun Kim that the coefficients appearing in the numerator of the generating function for the graded dimension are symmetric, and that they sum to $\frac{(2j)!}{j!}$.  Unimodality is still open.  
	\end{abstract}

	\begingroup
	\let\cleardoublepage\relax
	\let\clearpage\relax
	\tableofcontents
	\endgroup

\section{Introduction}\label{sec: blade honeycombs}
We initiate the study of permutohedral blades, which lie at the intersection of matroid theory, tropical geometry, topology and scattering amplitudes.  In this section we provide motivation for the problem which we study, by describing first the construction of tessellations of $\mathbb{R}^{n-1}$ with generic (weight) permutohedra, then connections to matroid theory, and finally  tropical geometry.

Throughout this paper, the field for the vector space generated by linear combinations of indicator functions will be the rational numbers $\mathbb{Q}$.

Consider the open simplex $W\subset \mathbb{R}^n\slash (1,1,\ldots, 1)\mathbb{R}$ which is characterized by the $n$ facet inequalities $ y_1<y_2<\cdots <y_n<y_1+1$; this is often called the (fundamental) \textit{Weyl alcove}.  Following (\cite{PostnikovPermutohedra}, Proposition 16.6), by reflecting a point $y\in W$ across the affine hyperplanes $y_i-y_j \in\mathbb{Z}$, one obtains the set of vertices of a honeycomb tessellation of $\mathbb{R}^n\slash (1,1,\ldots, 1)\mathbb{R}$ with generalized (weight) permutohedra; the property to note here is that the permutohedra have disjoint interiors and intersect only on common facets.  

\begin{figure}[h!]
	\centering
	\includegraphics[width=0.4\linewidth]{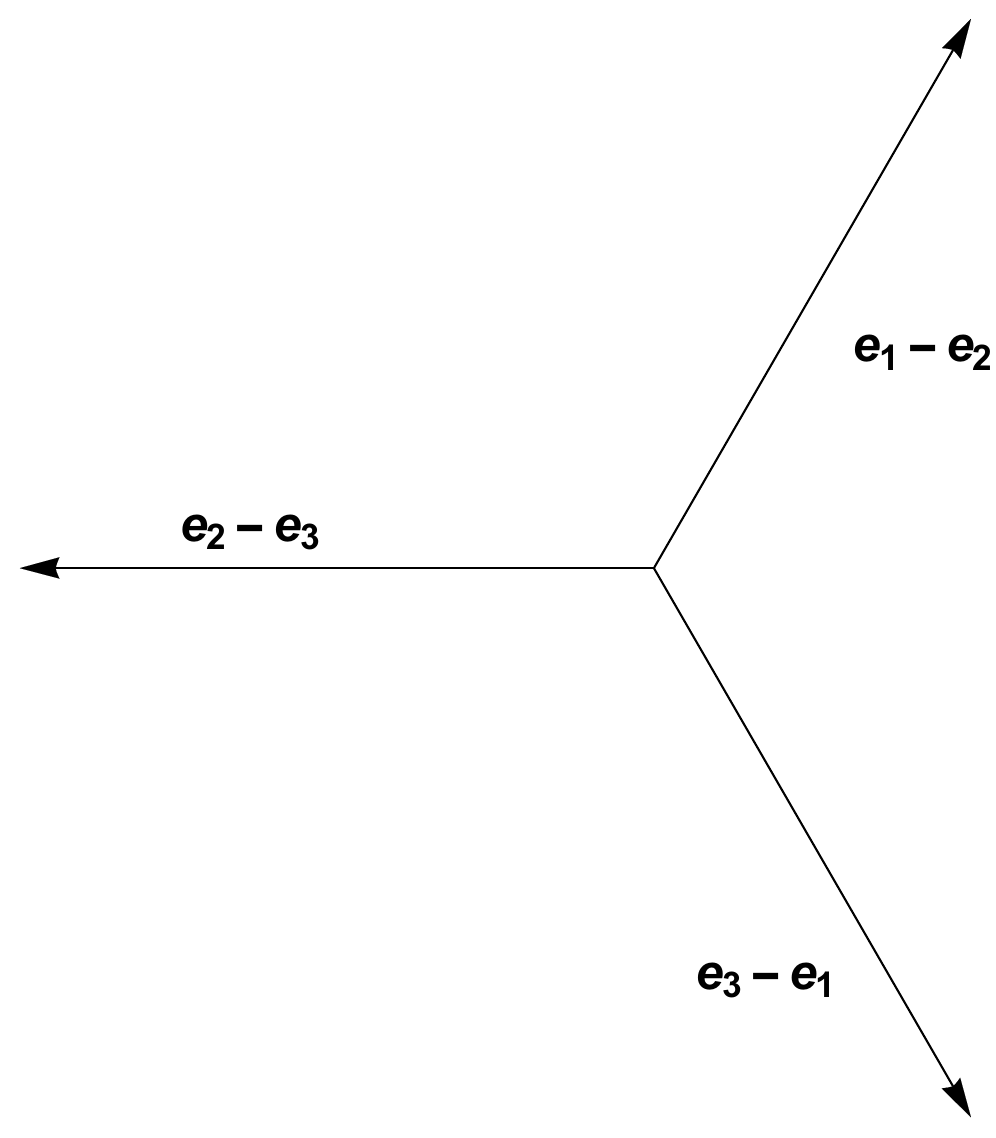}
	\includegraphics[width=0.5\linewidth]{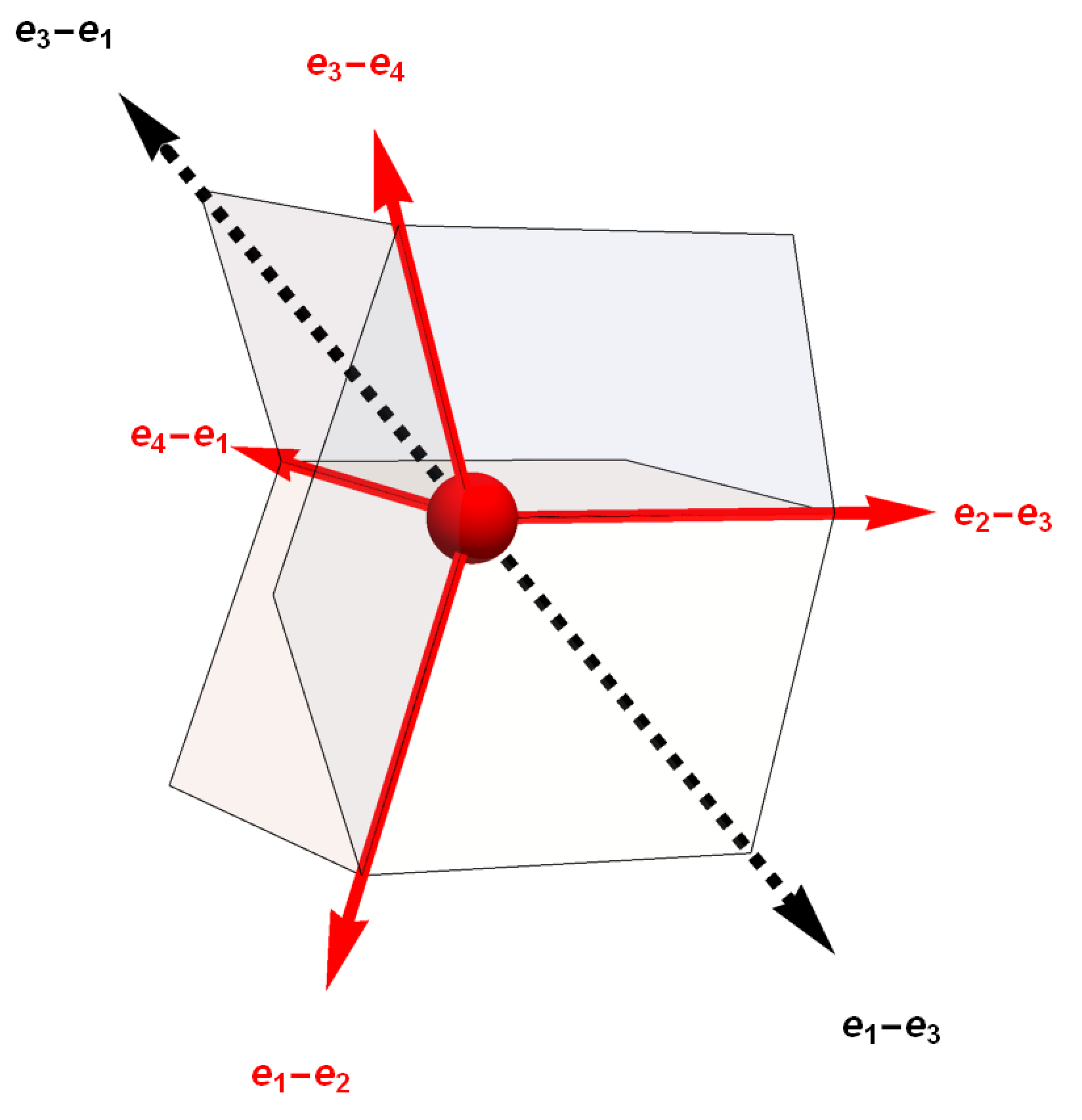}
	\caption{Left: the tripod $((1,2,3))$.  Right: the blade $((1,2,3,4))$ as a Minkowski sum of the two tripods $((1,2,3))$ and $((1,3,4))$.}
	\label{fig:blade3coordinates000}
\end{figure}
Let us pass to an (affine) section of the quotient $\mathbb{R}^n\slash (1,1,\ldots, 1)\mathbb{R}$, say $\left\{x\in\mathbb{R}^n: \sum_{i=1}^n x_i=r \right\}$ for some integer $r$.  In particular, let us introduce the notation $V_0^n = \{x\in \mathbb{R}^n: \sum_{i=1}^n x_i=0 \}$.

We are interested in the geometry of the affine ``shell'' $\mathcal{H}_y$ which consists of the union of the facets of the permutohedra in the tessellation; it partitions a neighborhood of a generic point $y$ into $n$ chambers whose tangent cones at $y$ are the $n$ cyclically related sets
$$\lbrack i,i+1,\ldots, i-1\rbrack_y :=\left\{y+\sum_{j=i,i+1,\ldots, i-2} t_i (e_i-e_{i+1}):t_i\ge 0\right\},$$
intersecting at their common point $y$, such that $\mathcal{H}_y$ coincides locally with the union of their facets.  See Figure \ref{fig:blade3coordinates000} for the neighborhoods of a vertex for $n=3,4$.  The tessellation itself is given below with several fundamental regions chosen, starting in Figure \ref{fig:weightpermtessellation}.  The set of these cones forms a complete \textit{simplicial fan} with base point at $y$ with $\mathcal{H}_y$ locally the (n-2)-skeleton.  The $(n-2)$-skeleton of the fan equals the union of the $\binom{n}{2}$ codimension-2 cones
$$((1,2,\ldots, n))_y := \bigcup_{1\le i< j\le n}\left\{y+ \left(\sum_{\ell\not\in\{i,j\}}t_\ell(e_\ell-e_{\ell+1})  \right): t_\ell\ge0  \right\}.$$

Now we come to the moral of the story about honeycombs:

\begin{quote}\label{quotation: permutohedral tessellation}
	\textit{Choose a small open set $\mathcal{Y}_y$ about the (generic) point $y$ such that, for any $x\in \mathcal{Y}_y$ we have $x_i-x_j\not\in\mathbb{Z}$ for all $i\not=j$; this guarantees that the image of $\mathcal{Y}_y$ under reflection across any affine hyperplane $x_i-x_j\in\mathbb{Z}$ is disjoint from $\mathcal{Y}_y$.  Then on the neighborhood $\mathcal{Y}_y$ of $y$, the sets $\mathcal{H}_y$ and $((1,2,\ldots, n))_y$ coincide.}
\end{quote}
For the case $n=4$ see Figure \ref{fig:BladePermutohedralSingularityInTetrahedron}, where $((1,2,3,4))_{(1,1,1,1)}$ is framed by the tetrahedron with vertices the four permutations of $(4,0,0,0)$.  

Figures \ref{fig:weightpermtessellation} and \ref{fig:weightpermtessellation2} describe the passage from the honeycomb tessellation to the fundamental parallelepiped, where the generalized permutohedra have two out $n=3$ possible edge lengths; see also \cite{OcneanuYoutubeVideos,PostnikovPermutohedra}.
\begin{figure}[!htb]
	\minipage{0.5\textwidth}
		\includegraphics[width=.75\linewidth]{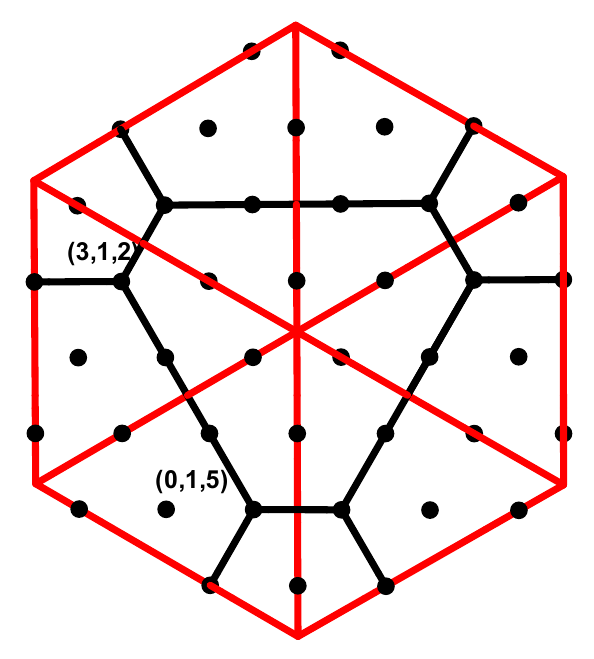}
			\label{fig:hexagonalbladetessellation}
			\caption{One choice of a (hexagonal) period; opposite edges of the red hexagon are identified. Red edges are segments of affine hyperplanes, placed at $x_1-x_2 \in 3+6\mathbb{Z},\ x_2-x_3 \in 1+6\mathbb{Z},\ x_3-x_1\in 2+6\mathbb{Z}$.  Black edges in the tiling are segments parallel to the three root directions $e_i-e_j$.  Near the vertices of the (three) weight permutohedra (with black edges of lengths respectively 1,2,3) the black shell coincides with a blade.  The blade $((1,2,3))$ is translated to for instance the point $(3,1,2)$, while the blade $((1,3,2))$ is translated to for instance $(0,1,5)$; compare to Figures \ref{fig:weightpermtessellation} and \ref{fig:weightpermtessellation2}.}
	\endminipage\hfill
	\minipage{0.4\textwidth}
	\includegraphics[width=\linewidth]{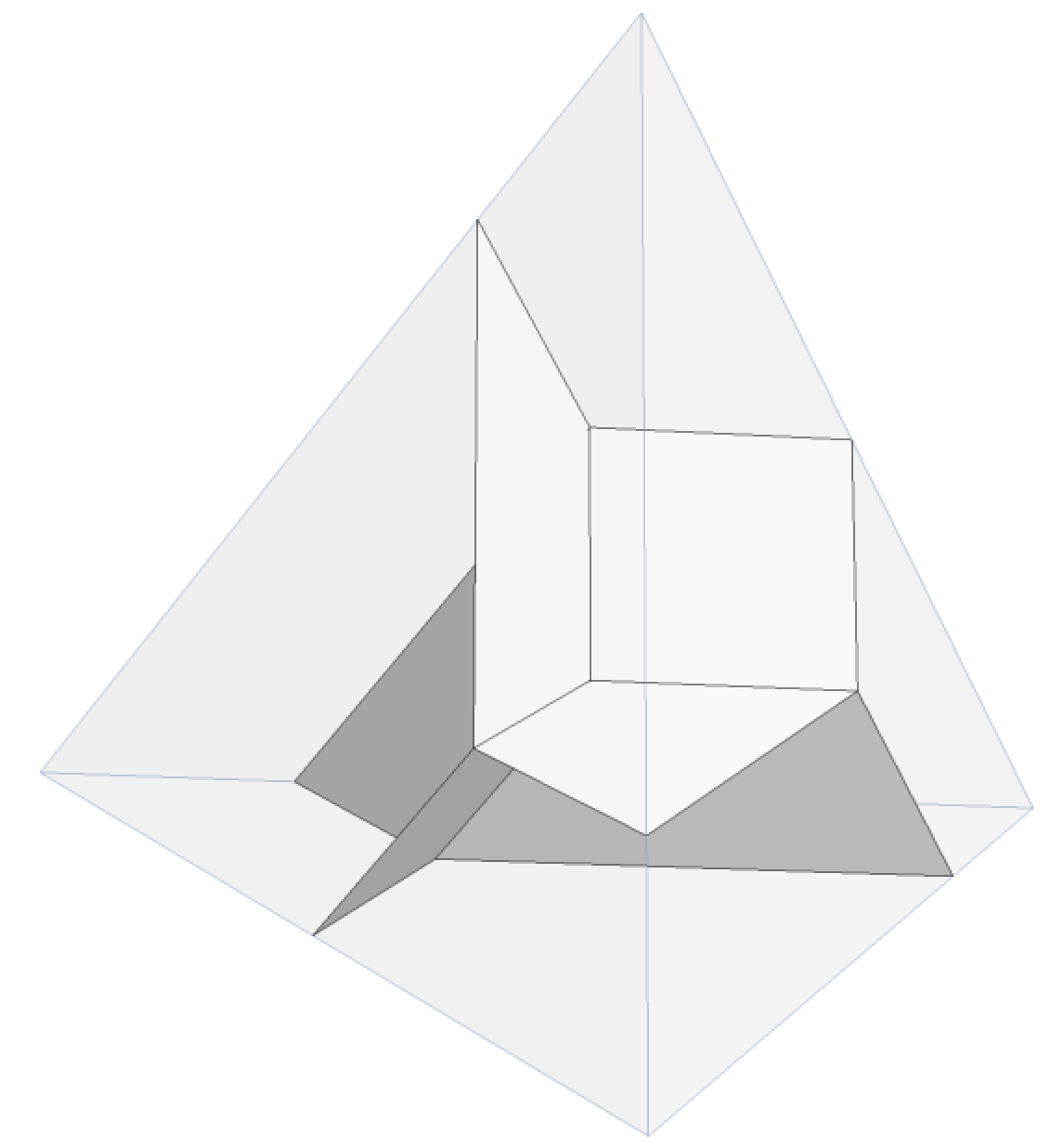}
	\caption{Part of the blade $((1,2,3,4))$.  It is the 2-skeleton of a complete fan, here viewed inside a tetrahedral frame.  Note that here the edges in the 1-skeleton of the blade are not perpendicular to the facets of the tetrahedron, but are rather parallel to its edges.}\label{fig: triangulation independence complete blade}
	\label{fig:BladePermutohedralSingularityInTetrahedron}
	\endminipage\hfill
\end{figure}

\begin{figure}[h!]
	\centering
	\includegraphics[width=0.45\linewidth]{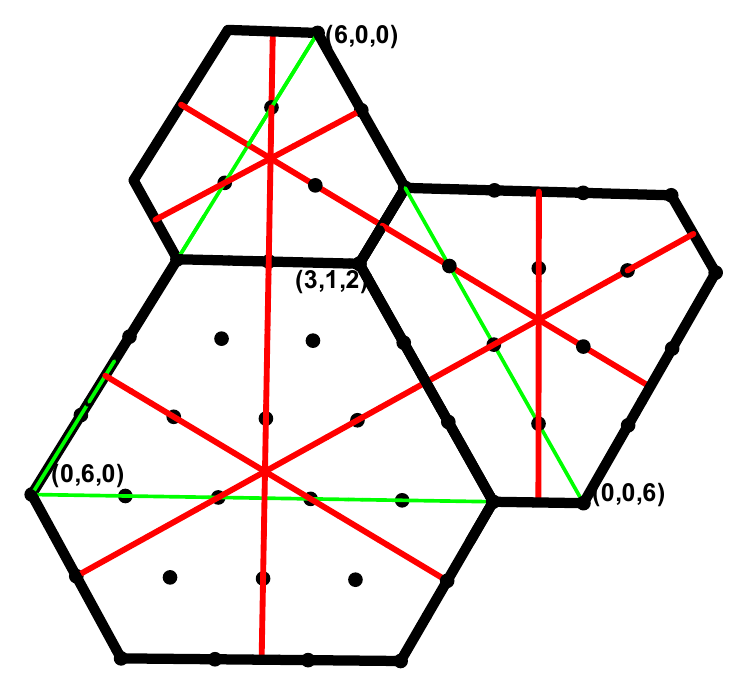}
	\caption{Tessellating with three generalized (weight) permutohedra.  Here rather than taking rational-valued coordinates for the point $y$, we have dilated the Weyl alcoves by a factor 6; then $y=(3,1,2)$ is reflected across the (red) affine reflection hyperplanes placed at $x_1-x_2 \in 3+6\mathbb{Z},\ x_2-x_3 \in 1+6\mathbb{Z},\ x_3-x_1\in 2+6\mathbb{Z}$.
  }
	\label{fig:weightpermtessellation}
		\includegraphics[width=0.6\linewidth]{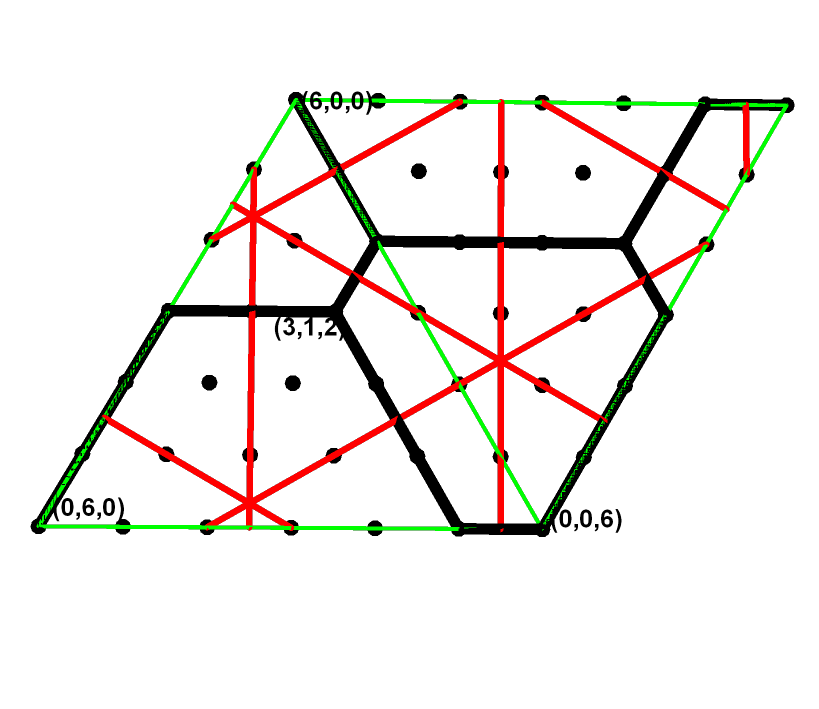}
	\caption{Same as in Figures \ref{fig:hexagonalbladetessellation} and \ref{fig:weightpermtessellation}, but shifted into the fundamental parallelepiped.  Then the shell (outlined in black) coincides locally with translations of either the blade $((1,2,3))$ at for instance the points $(3,1,2)$  or $(4,-3,5)$, or with the blade $((1,3,2))$ at for instance the point $(4,0,2)$. The lengths of the arms extending from the point $(3,1,2)$ add to the edge length $(=6)$ of the ambient simplex (outlined in green on bottom left).  Such subdivisions have appeared in the work of Ocneanu \cite{OcneanuVideo}.}
	\label{fig:weightpermtessellation2}
\end{figure}

\begin{rem}
	Since the first version of this preprint was posted, blades have appeared in various contexts, in the study of generalized scattering amplitudes and planar bases of kinematic invariants, which are dual to rays in the positive tropical Grassmannian $\text{Trop}^+G(k,n)$.  See \cite{Early19WeakSeparationMatroidSubdivision} for a connection between certain matroidal blade arrangements on hypersimplices and weakly separated collections.  For constructions of a distinguished set of codimension 1 singularities of scattering amplitudes in theoretical physics, see \cite{Early2019PlanarBasis}.  An embedding of the positive tropical Grassmannian $\text{Trop}^+G(k,n)$ into the set of weighted blade arrangements on the vertices of $\Delta_{k,n}$ was constructed in \cite{Early2020WeightedBladeArrangements}.  See also closely related work in \cite{CachazoBorges,CGUZ} on collections and arrays of Feynman diagrams, where each Feynman diagram is assigned a metric and the collections of metrics are glued together.  See also \cite{GuevaraZhang} for further study of rays and further progress in constructing of generalized biadjoint scalar amplitudes.
\end{rem}

\subsection{What is a blade?}\label{sec: what is a blade}  
Blades, defined by A. Ocneanu in \cite{OcneanuVideo}, are objects which are closely related to matroid polytopes and their subdivisions, and tropical geometry, with connections to the cohomology ring of certain configuration spaces \cite{EarlyReiner}.

Section \ref{sec: blades motivation matroid subdivisions} provides the link to matroid subdivisions: we show that blades in $n$ coordinates induce matroid subdivisions of the middle hypersimplex $\Delta(n,2n)$.  Here, by a \textit{matroid subdivision} we mean a decomposition of a hypersimplex\footnote{However, in general, the ambient polytope for a matroid subdivision does not have to be the hypersimplex, the matroid polytope for the uniform matroid with $d$ elements and rank $k$.}, into a collection of finitely many matroid polytopes $P_1,\ldots, P_d$ such that every (nonempty) intersection $P_{j_1} \cap \cdots \cap P_{j_\ell}$ is itself a face of $P_{j_1},\ldots, P_{j_\ell}$.

We move quickly to be explicit about the link to tropical geometry: in Section \ref{sec: bladesInTropicalGeometry} we exhibit blades as tropical hypersurfaces.

In Section \ref{sec: configuration space circle} we use what we learned about blades to define parametrizations of all possible sets of distinct collision classes of (labeled) particles on the circle modulo simultaneous rotation, in the configuration space $\mathcal{O}^n := U(1)^n \slash U(1)$.  

In general blades may be degenerate.  For this it is helpful to proceed using the Minkowski sum operation $\oplus$, defined for polyhedra $\pi_1,\pi_2$ by 
$$\pi_1\oplus \pi_2 = \{v_1+v_2:v_1\in \pi_1,\ v_2\in\pi_2\}.$$

Given disjoint (nonempty) subsets $S_1,S_2\subsetneq\{1,\ldots, n\}$, define the half space
$$\lbrack S_1,S_2\rbrack := \left\{x\in V_0^n: x_{S_1}\ge 0, x_i = 0\text{ for all }i\in (S_1\cup S_2)^c \right\}.$$
For any ordered set partition $(S_1,\ldots, S_k)$ of $\{1,\ldots, n\}$, define the cone
$$\lbrack S_1,\ldots, S_k\rbrack = \lbrack S_1,S_2\rbrack \oplus \lbrack S_2,S_3\rbrack \oplus \cdots \oplus \lbrack S_{k-1},S_k\rbrack.$$

Definition \ref{defn:blade}, reproduced here, is due to A. Ocneanu.
\begin{defn}[\cite{OcneanuVideo}]
	For an ordered set partition $(S_1,\ldots, S_k)$ of $\{1,\ldots, n\}$ with $k\ge 3$, the \textit{blade} $((S_1,\ldots, S_k))$ is the set theoretic \textit{union} of the Minkowski sums of cones,
	$$((S_1,\ldots, S_k))=\bigcup_{1\le i<j\le k} \lbrack S_1,S_2\rbrack\oplus \cdots\oplus \widehat{\lbrack S_i,S_{i+1}\rbrack} \oplus \cdots \oplus\widehat{\lbrack S_j,S_{j+1}\rbrack} \oplus \cdots\oplus \lbrack S_k,S_1\rbrack,$$
	where the hat means that that corresponding term has been omitted, and where we adopt the convention $S_{k+1}=S_1$.  When $k=2$ and $S_1\sqcup S_2 = \{1,\ldots, n\}$, then put 
	$$((S_1,S_2)) = \lbrack S_1\rbrack \oplus \lbrack S_2 \rbrack = \left\{\sum_{i\in S_1\cup S_2}t_i e_i: t_{S_1} = 0,\ t_{S_2}=0 \right\}.$$
	Finally, when $k=1$ we set $$((12\cdots n)) =V_0^n.$$
\end{defn}

To each blade $((S_1,\ldots, S_k))$ we shall assign two piecewise-constant (surjective) functions, namely its \textit{characteristic} function 

$$\Gamma_{S_1,\ldots, S_k}:\left\{x\in \mathbb{R}^n: \sum_{i=1}^n x_i=0 \right\}\rightarrow \{0,1\},$$
which takes the value 1 on $((S_1,\ldots, S_k))$ and zero on its complement, and a \textit{graduated} function
$$\lbrack (S_1,\ldots, S_k)\rbrack: \left\{x\in \mathbb{R}^n: \sum_{i=1}^n x_i=0 \right\}\rightarrow \{1,\ldots, k\},$$
where the level set corresponding to the height $j\in \{1,\ldots, k\}$ has \textit{co}dimension $j-1$ in $V_0^n$.

These two families of functions capture essentially complementary aspects of the blades $((S_1,\ldots, S_k))$; both will be useful in what follows and we believe deserve further study.  In Section \ref{sec: concluding remarks} we describe which properties of these two families remain to be established.

The main construction involves characteristic functions of certain embeddings of the type $A_2$ root system.  The characteristic functions of elementary (one-dimensional) tripods, are denoted by $\gamma_{i,j,k}$ and satisfy the cyclic index relation $\gamma_{i,j,k}=\gamma_{k,i,j}$.  More generally, lumped tripods are labeled by cyclic classes of 3-block ordered set partitions $(S_1,S_2,S_3)$, of the subset $S_1\cup S_2\cup S_3$ of $\{1,\ldots, n\}$.  Additionally we have characteristic functions $1_{S}$ of subspaces $\{x\in V_0^n: x_i=0\text{ whenever }i\not\in S\}$ for any proper nonempty subset $S$ of $\{1,\ldots, n\}$.  Together, these satisfy the fundamental relation
\begin{eqnarray*}
	\gamma_{S_1,S_2,S_3} + \gamma_{S_1,S_3,S_2} & = &  1_{S_1\cup S_2} + 1_{S_2\cup S_3} + 1_{S_3\cup S_1} - 1_{S_1}1_{S_2}1_{S_3}.
\end{eqnarray*}
\begin{figure}[h!]
	\centering
	\includegraphics[width=1\linewidth]{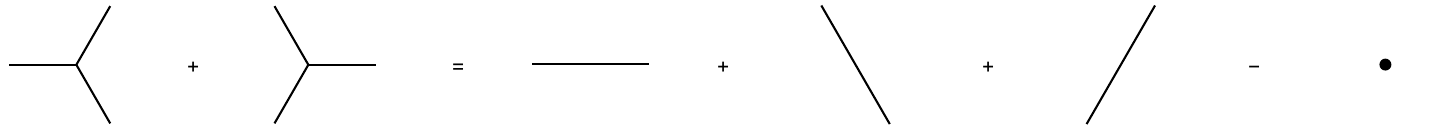}
	\caption{The blade relation
		$\gamma_{1,2,3} + \gamma_{1,3,2} = 1_{12}+1_{23}+1_{31} -1$.}
	\label{fig:bladerelationdim3}
\end{figure}

In the case of singlet blocks (as is the case in Figure \ref{fig:bladerelationdim3}), since all of the functions $1_{1},\ldots,  1_{n}$ are equal to the identity with respect to convolution, we could (and usually will) abuse notation write the fundamental blade relation as
$$\gamma_{1,2,3} + \gamma_{1,3,2} = 1_{12}+1_{23}+1_{31} -1,$$
rather than the full expression
$$\gamma_{1,2,3} + \gamma_{1,3,2} = 1_{12}1_3+1_1 1_{23}+1_{31}1_2 -1_1 1_2 1_3.$$

\subsection{Blades induce matroid subdivisions}\label{sec: blades motivation matroid subdivisions}

For integers $1\le d\le m-1$, denote by $\Delta(d,m) = \left\{x\in \lbrack 0,1\rbrack^m:\sum_{j=1}^m x_j=d \right\}$ the $d^\text{th}$ hypersimplex, considered here to be an affine section of the unit cube. 

In what follows, we shall give a natural lifting of blades $((S_1,\ldots, S_k))\subset V_0^n$ to matroid subdivisions (of the middle hypersimplex $\Delta(n,2n)$).  Matroid subdivisions have appeared prominently in algebraic geometry and configuration spaces (Hacking, Keel and Tevelev \cite{HackingKeelTevelev} and Kapranov \cite{Kapranov}), in tropical geometry \cite{SpeyerSturmfels}, and in particular more recently in (Fink \cite{Fink Thesis} and Schroter \cite{Schroter}).  The subdivisions which we obtain here specialize to those derived from Catalan matroid polytopes from \cite{Ardila2003,BidkoriSullivant}. 

Let us fix the affine-linear map $\psi: \mathbb{R}^{2n}\rightarrow\mathbb{R}^n$, defined componentwise by 
$$(x_1,\ldots, x_{2n}) \mapsto (x_1+x_2-1,x_3+x_4-1\ldots, x_{2n-1}+x_{2n}-1).$$
Schubert matroids were first studied by Crapo \cite{Crapo} under the name of nested matroids and have been rediscovered in various contexts, for example \cite{DerksenFink} in the context of Hopf algebras, in the context of permutations \cite{OcneanuPermutations}, and in the context of positivity and scattering amplitudes \cite{Early19WeakSeparationMatroidSubdivision}.  We record the definition here.


\begin{defn}
	A decorated ordered set partition $((S_1)_{s_1},\ldots, (S_k)_{s_k})$ of $(\{1,\ldots, m\},d)$ is an ordered set partition $(S_1,\ldots, S_k)$ of $\{1,\ldots, m\}$ together with a list of (not necessarily positive, in general) integers $(s_1,\ldots, s_k)$ with $\sum_{j=1}^k s_j=d$.  It is said to be of type $\Delta(d,n)$ in the case that $\sum_{j=1}^k s_j=d$ and $1\le s_j\le\vert S_j\vert-1 $, for $d$ a positive integer.  In this case we write $((S_1)_{s_1},\ldots, (S_k)_{s_k}) \in \text{OSP}(\Delta(d,m))$, and we denote by $\lbrack (S_1)_{s_1},\ldots, (S_k)_{s_k}\rbrack$ the convex polytope in $\Delta(d,m)$ that is cut out by the inequalities
	\begin{eqnarray}\label{eq:hypersimplexPlate}
		x_{S_1} & \ge & s_1 \nonumber\\
		x_{S_1\cup S_2} & \ge & s_1+s_2\nonumber\\
		& \vdots & \\
		x_{S_1\cup\cdots \cup S_{k-1}} & \ge & s_1+\cdots +s_{k-1}\nonumber\\
		\sum_{j=1}^m x_j & = & d.\nonumber
	\end{eqnarray}
\end{defn}
\begin{rem}
	It is easy to see by direct comparison of the equations that when $((S_1)_{s_1},\ldots, (S_k)_{s_k}) \in \text{OSP}(\Delta(d,m))$, then $\lbrack (S_1)_{s_1},\ldots, (S_k)_{s_k}\rbrack$ is the truncation of a (translated) permutohedral cone from Definition \ref{defn: permutohedral cone} (and \cite{EarlyCanonicalBasis}) to a unit cube, where $d\in\{1,\ldots, n-1\}$.
		
	That Equation \eqref{eq:hypersimplexPlate} defines a matroid polytope can be seen by reversing the directions of the inequalities in Equation \eqref{eq:hypersimplexPlate} and observing that we recover the matroid polytope for a nested matroid of rank $d$.  See for instance Example 3.3 of \cite{SchroterMultisplits}.
\end{rem}

There is a natural involution on the set of hypersimplices, $\iota:\Delta(d,m) \mapsto \Delta(m-d,m)$ given coordinate-wise by $x_i\mapsto 1-x_i$ for each $i=1,\ldots, m$.  In particular, it follows that $\iota$ preserves the middle hypersimplices $\Delta(n,2n)$ for $i=1,\ldots, 2n$.

Moreover, from equations \eqref{eq:hypersimplexPlate} we see that $\iota: \Delta(n,2n)\rightarrow \Delta(n,2n)$ acts by 
$$\lbrack (S_1)_{s_1},\ldots, (S_k)_{s_k}\rbrack \mapsto \lbrack (S_1)_{\vert S_1\vert - s_1},\ldots, (S_k)_{\vert S_k\vert - s_k}\rbrack. $$
Call $((S_1)_{s_1},\ldots, (S_k)_{s_k})$ (and respectively the hypersimplex plate $\lbrack(S_1)_{s_1},\ldots, (S_k)_{s_k}\rbrack$) \textit{central} if $\vert S_i\vert =2s_i$.

\begin{cor}
	We have
	$$\iota\left(\lbrack (S_1)_{s_1},\ldots, (S_k)_{s_k}\rbrack\right) = \lbrack (S_1)_{s_1},\ldots, (S_k)_{s_k}\rbrack$$
	if and only if $\lbrack  (S_1)_{s_1},\ldots, (S_k)_{s_k}\rbrack $ is central.
\end{cor}
 It follows that central hypersimplex plates are exactly those obtained as inverse images under $\psi$ of plates in $V_0^n$.  Now, as shown in Corollary \ref{cor: cyclic sum identity}, the set of $k$ cyclic block rotations of a plate $\lbrack T_1,\ldots, T_k\rbrack$ have union all of $V_0^n$ and intersect only on common facets, and it follows that together they lift via $\psi$ to induce a matroid subdivision of $\Delta(n,2n)$, partitioning $\Delta(n,2n)$ into $k$ matroid polytopes.  We say that the blade $((T_1,\ldots, T_\ell))$, which is the union of the facets of the cyclic block rotations of $\lbrack T_1,\ldots, T_\ell\rbrack$ which are in the interior of $\Delta(n,2n)$, induces the matroid subdivision 
$$\left\{\psi^{-1}\left(\lbrack T_1,\ldots, T_\ell\rbrack\right),\psi^{-1}\left(\lbrack T_2,T_3,\ldots, T_1\rbrack\right),\psi^{-1}\left(\lbrack T_\ell, T_1,\ldots, T_{\ell-1}\rbrack\right)\right\}$$
of $\Delta(n,2n)$.

Let us specialize still further.

In the case when all integers $s_i=1$, then after reversing the inequalities the polytope $\lbrack (12)_1,(34)_1,\ldots, (2n-1\ 2n)_1\rbrack$
is the same as the Catalan matroid polytope from \cite{Ardila2003,BidkoriSullivant}.  Indeed, again invoking Corollary \ref{cor: cyclic sum identity} below, we obtain Theorem 2.2 of \cite{BidkoriSullivant}, according to which the $n$ cyclically block-rotated matroid polytopes 
$$\lbrack(12)_1,(34)_1,\ldots, (2n-1\ 2n)\rbrack, \lbrack(34)_1,(56)_1,\ldots, (12)\rbrack,\ldots, \lbrack (2n-1\ 2n),(12)_1,\ldots,(2n-3\ 2n-2)\rbrack$$
have disjoint interiors and have union the whole hypersimplex $\Delta(n,2n)$.  In other words, they form a matroid subdivision of the hypersimplex $\Delta(n,2n)$.  

Now taking the union of the $\binom{n}{2}$ facets of the block rotations of $\lbrack 12_1 34_1\cdots (2n-1\ 2n)_1\rbrack$ which are interior to $\Delta(n,2n)$ and projecting the result with $\psi$, we obtain (a finite truncation of) the blade $((1,2,\ldots, n))$.
\begin{example}
	The tripod $((1,2,3))$ corresponds (after switching the directions of the inequalities) to the image under $\psi$ of the $4$-skeleton of the Catalan matroid subdivision from \cite{BidkoriSullivant}; that is, the tripod $((1,2,3))$ induces a subdivision of $\Delta(3,6)$ into three matroid polytopes
	$$\lbrack 12_1 34_1 56_1\rbrack,\lbrack 34_1 56_1 12_1 \rbrack,\lbrack 56_1 12_1 34_1 \rbrack.$$
\end{example}
	\begin{figure}[h!]
	\centering
	\includegraphics[width=0.4\linewidth]{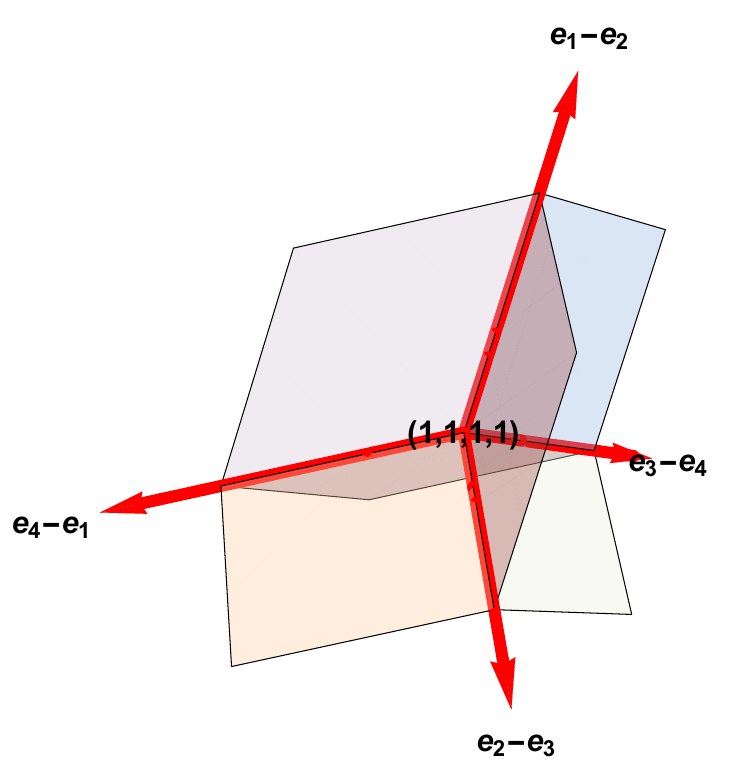}
	\includegraphics[width=0.4\linewidth]{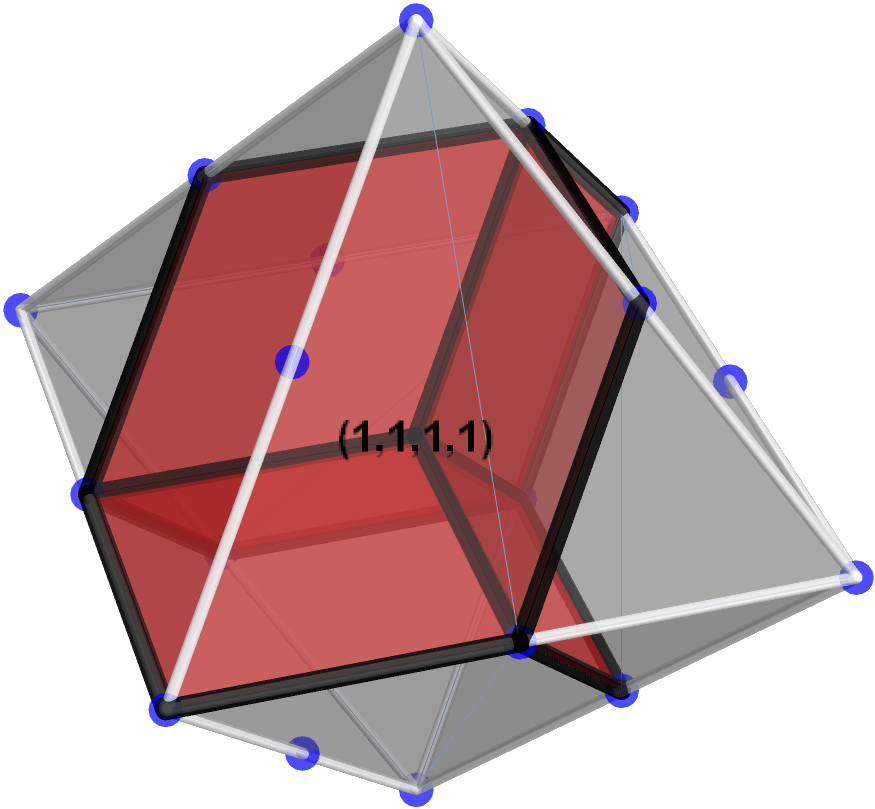}	
	\caption{For Example \ref{example: matroid subdivision 4 coords}: the blade $((1,2,3,4))$ (left) induces the matroid subdivision $((12_1 34_1 56_178_1))\in\text{OSP}(\Delta(4,8))$ (seen projected into the second dilation of the octahedron $\Delta(2,4)$) of the hypersimplex $\Delta(4,8)$  (right).}
	\label{fig:4-9-2019-octahedron-in-blade}
\end{figure}
\begin{example}\label{example: matroid subdivision 4 coords}
	The four chambers of the matroid subdivision induced in $\Delta(4,8)$ by the hypersimplicial blade $((12_1 34_1 56_1 78_1 ))$ are cut out by the facet inequalities respectively
	$$y_{12}\ge 1,\ y_{1234}\ge 1+1,\ y_{123456}\ge 1+1+1$$
	$$y_{34}\ge 1,\ y_{3456}\ge 1+1,\ y_{345678}\ge 1+1+1$$
	$$y_{56}\ge 1,\ y_{5678}\ge 1+1,\ y_{567812}\ge 1+1+1$$
	$$y_{78}\ge 1,\ y_{7812}\ge 1+1,\ y_{781234}\ge 1+1+1.$$
	These chambers project onto the four chambers in the subdivision $((1_1 2_1 3_1 4_1))$ of the second dilation of the octahedron $\left\{x\in \lbrack 0,2\rbrack^4: \sum_{i=1}^4 x_i=4 \right\}$ in Figure \ref{fig:4-9-2019-octahedron-in-blade}, with facet inequalities respectively
	
	$$x_{1}\ge 1,\ x_{12}\ge 1+1,\ x_{123}\ge 1+1+1$$
	$$x_{2}\ge 1,\ x_{23}\ge 1+1,\ x_{234}\ge 1+1+1$$
	$$x_{3}\ge 1,\ x_{34}\ge 1+1,\ x_{341}\ge 1+1+1$$
	$$x_{4}\ge 1,\ x_{41}\ge 1+1,\ x_{412}\ge 1+1+1.$$
	Here we have used different variables to emphasize that one set of polytopes is in $\Delta(4,8)$, while the other set is in the second dilation of the octahedron $\Delta(2,4)$.
\end{example}

\subsection{Blades in tropical geometry}\label{sec: bladesInTropicalGeometry}
Following \cite{Sturmfels Speyer Tropical intro}, a \textit{tropical polynomial function} $p:\mathbb{R}^n\rightarrow\mathbb{R}$ is the minimum of a finite set of linear functions,
$$p(x) = \text{min} \left\{f_1(x),f_2(x),\ldots, f_m(x)\right\},$$
whose graph $Z(p)$ is a piecewise-linear hypersurface in $\mathbb{R}^{n+1}$.  As such, the normal vector to $Z(p)$ changes direction only across a set of codimension 1 ``cracks'' in $\mathbb{R}^n$.  Translating from \cite{Sturmfels Speyer Tropical intro}, the collection of such cracks is called a \textit{tropical hypersurface} $\mathcal{H}_p$, consisting of points in $\mathbb{R}^n$ such that the minimum value is achieved by at least two of the linear forms $f_i$.

Here we show that the blade $((S_1,\ldots, S_k))\subset V_0^n$ is equal to the tropical hypersurface defined by the tropical polynomial
$$\text{min} \left\{L_1(x),\ldots, L_k(x)\right\},$$
where $L_i(x) = -(x_{S_i} +x_{S_i\cup S_{i+1}}+\cdots +x_{S_i\cup S_{i+1}\cup \cdots \cup S_{i-2}})$, with the convention that its indices are cyclic with $S_{i+k}=S_i$.  We remind that $x_{S_i\cup S_{i+1}\cup\cdots\cup S_{i-1}}=0$.
\begin{prop}
		The blade $((S_1,S_2,\ldots,S_k))$ is a tropical hypersurface in $V_0^n$.
\end{prop}

\begin{proof}
Supposing we are given a point $x\in V_0^n$ that is simultaneously a minimum of some $L_i$ and $L_j$, with value say $c_x$.  Taking without loss of generality $(1 = i)<j\le n$, then 
	$$c_x=-(x_{S_1} +x_{S_1\cup S_{2}}+\cdots +x_{S_1\cup S_{2}\cup \cdots \cup S_{k-1}})=-(x_{S_j} +x_{S_j\cup S_{j+1}}+\cdots +x_{S_j\cup S_{j+1}\cup \cdots \cup S_{j-2}}),$$
	or equivalently
	$$(k-(j-1))x_{S_1\cup \cdots \cup S_{j-1}} = (j-1)x_{S_j\cup\cdots \cup S_k}.$$
	But since in $V_0^n$ we have $x_{S_1\cup\cdots \cup S_{j-1}} + x_{S_{j}\cup\cdots \cup S_k} = \sum_{i=1}^n x_i=0$, it follows that
	$$(x_{S_1\cup S_{2}\cup\cdots \cup S_{j-1}})=0,\  \text{ and }\ (x_{S_{j}\cup S_{j+1}\cup\cdots \cup S_{k}})=0.$$
Now, supposing that $c_x = L_i(x) = L_j(x)$ is a minimum value, then for all $p \in\{1,\ldots, n\}\setminus\{i,j\}$,
\begin{eqnarray*}
	x_{S_p} +x_{S_p \cup S_{p+1}}+\cdots + x_{S_p \cup S_{p+1}\cup\cdots \cup S_{p -1}} & \le & -c_x.
\end{eqnarray*}
In particular, for each $p=1,2,\ldots, j-1$, 
\begin{eqnarray*}
(x_{S_1} +x_{S_1\cup S_{2}}+\cdots +x_{S_1\cup S_{2}\cup \cdots \cup S_{k-1}}) - (x_{S_p} +x_{S_p \cup S_{p+1}}\cdots + x_{S_p \cup S_{p+1}\cup\cdots \cup S_{p -2}}) & \ge & 0\\
 (k-(p-1))x_{S_1\cup \cdots \cup S_{p-1}} - (p-1)x_{S_{p}\cup\cdots \cup S_{k}}& \ge  & 0,
\end{eqnarray*}
and adding $(p-1)x_{S_1\cup\cdots \cup S_k}(\equiv 0)$ to both sides we find 
$$x_{S_1\cup\cdots \cup S_{p-1}}\ge 0.$$
Similarly we find $x_{S_j\cup\cdots \cup S_{q-1}}\ge 0$ for each $q=j,j+1,\ldots, k$.  Repeating for each $1\le i<j\le n$ and putting everything together, we recover the blade $((S_1,\ldots, S_k))$, as 
\begin{eqnarray*}
&&\bigcup_{1\le i<j\le k} \left\{x: x_{S_i}\ge 0,x_{S_i\cup S_{i+1}}\ge 0,\ldots, \ x_{S_j}\ge 0,x_{S_j\cup S_{j+1}}\ge 0,\ldots; \ x_{S_i\cup\cdots \cup S_{j-1}} = 0= x_{S_j\cup\cdots \cup S_{i-1}} \right\}\\
& = & \bigcup_{1\le i<j\le k} \lbrack S_i,S_{i+1},\ldots, S_{j-1}\rbrack \oplus \lbrack S_{j},S_{j+1},\ldots, S_{i-1}\rbrack\\
& = & \bigcup_{1\le i<j\le k} \lbrack S_1,S_{2}\rbrack \oplus \cdots  \lbrack \widehat{S_{i},S_{i+1}} \rbrack \oplus \cdots \oplus \widehat{\lbrack S_{j},S_{j+1\rbrack}}\oplus \cdots \oplus \lbrack S_k,S_1\rbrack,
\end{eqnarray*}
where we remind that here $\oplus$ denotes the Minkowski sum.  This last line is in agreement with Definition \ref{defn:blade}.
\end{proof}

\subsection{Discussion of results}

Our main technical results are Theorem \ref{thm: blade flag factorization} and Theorem \ref{thm: canonical basis cyclic sum plate}; we use the latter to introduce what we call the canonical\footnote{As opposed to the given basis of graduated functions, where all blades (except the whole space) are unions of cones of dimension $n-2$, the canonical basis is graded naturally by dimension.} basis for the graduated blades.  In Theorem \ref{thm: blade flag factorization}, we prove that the characteristic function $\Gamma_{S_1,S_2,\ldots, S_k}$ of the blade $((S_1,\ldots, S_k))$ with $k$ blocks factors as a convolution product of $k-2$ tripods, using the \textit{flag} factorization: $$\Gamma_{S_1,S_2,\ldots, S_k} = \Gamma_{S_1,S_2,S_3} \Gamma_{S_1,S_3,S_4}\cdots \Gamma_{S_1,S_{k-1},S_k}.$$
We deduce in Corollary \ref{cor: blade Minkowski sum decomposition} the set-theoretic identity for the Minkowski sum: we prove that any blade can be expressed (not uniquely) as a Minkowski sum of tripods and one-dimensional subspaces.  The proof is algebraic: we prove that characteristic functions of blades factorize with respect to the convolution product; however graduated functions of blades do not have the factorization property!

\begin{figure}[h!]
	\centering
	\includegraphics[width=0.5\linewidth]{BladesStickFramed3A}
	\caption{The $n=4$ blade $((1,2,3,4))$ as the Minkowski sum of two tripods $((1,2,3)),\ ((1,3,4))$.  The dotted black line has an apparent multiplicity in the computation of the Minkowski sum; however, upon inspection, for the convolution of characteristic functions of blades the line is smoothed away and has multiplicity 1.  This does not happen when the corresponding graduated functions of tripods are multiplied.
	}
\end{figure}

We prove that the factorization for characteristic functions of is independent of the triangulation; this shows, for example, that factorizations of $((1,2,\ldots, n))$ as Minkowski sums of tripods are in bijection with the set of triangulations of a cyclically oriented $n$-gon with vertex labels $1,2,\ldots n$.  

Theorem \ref{thm: canonical basis cyclic sum plate} derives the expansion of an element in the canonical (candidate) basis as a signed sum of \textit{graduated} functions of blades.  In Theorem \ref{thm: cyclic sum canonical basis} we remove the word ``candidate'' by showing that each can be obtained from an upper-unitriangular transformation from a set of graduated blades which according to Proposition \ref{prop: linear independence cyclic plate sum} is linearly independent.

Remark that there may be other interesting bases for the space spanned by characteristic functions of root cones $\lbrack S_1,\ldots S_k\rbrack$ which are labeled not by ordered set partitions, but by more general directed graphs with nodes $\{S_1,\ldots, S_k\}$; for example, there is the \textit{forkless} monomial basis from \cite{Grinberg} for the so-called subdivision algebra \cite{Meszaros}.  Forkless monomials are in bijection with directed forests $\{(i_1,j_1),\ldots, (i_m,j_m)\}$ with $i_a<j_a$, having the property that no node has two outgoing branches: $i_a\not=i_b$ whenever $a\not=b$.

In Section \ref{section: canonical basis enumeration}, Proposition \ref{prop: graded dimension blades}, we enumerate the canonical bases as $n$ increases, and then ask about possible geometric interpretations of the coefficients of the generating functions for the diagonals.  Indeed, by way of a simple counting argument we have for the enumeration of the canonical basis of graduated functions of blades,
$$T_{n,k}=\sum_{i=1}^n S(n,i)s(i-1,k-1),$$
where $S(n,i)$ is the Stirling number of the second kind, and $s(i,k)$ is the (unsigned) Stirling number of the first kind. 

For $n=1,2,\ldots, 6$ we have
$$\begin{array}{cccccc}
1 & \text{} & \text{} & \text{} & \text{} & \text{} \\
1 & 1 & \text{} & \text{} & \text{} & \text{} \\
1 & 4 & 1 & \text{} & \text{} & \text{} \\
1 & 15 & 9 & 1 & \text{} & \text{} \\
1 & 66 & 66 & 16 & 1 & \text{} \\
1 & 365 & 500 & 190 & 25 & 1 \\
\end{array}$$
Note that the rows sum (as they should) to the necklace numbers, which count ordered set partitions up to cyclic block rotation.

The generating function numerators for its diagonals appear to be symmetric and unimodal, and to have coefficients that sum to $(2n)!/n!$, O.E.I.S. A001813, \cite{oeis}.  The coefficients themselves agree with A142459 for the first four rows, and their sums agree, $(2n)!/n!$, but for $n\ge 5$ the coefficients are different.  See Section \ref{section: canonical basis enumeration} below for more details, as well as a conjecture.

Finally, it turns out that there is an interesting second grading on cyclic ordered set partitions which is in a sense complementary to the grading coming from the canonical blade basis.  Grouping cyclic ordered set partitions by the number of blocks one obtains the sequence given by O.E.I.S. A028246 \cite{oeis}:
$$a(n, k) = S(n, k)*(k-1)!.$$
This gives for $n=1,2,\ldots, 6$, 
$$\begin{array}{cccccc}
1 & \text{} & \text{} & \text{} & \text{} & \text{} \\
1 & 1 & \text{} & \text{} & \text{} & \text{} \\
1 & 3 & 2 & \text{} & \text{} & \text{} \\
1 & 7 & 12 & 6 & \text{} & \text{} \\
1 & 15 & 50 & 60 & 24 & \text{} \\
1 & 31 & 180 & 390 & 360 & 120 \\
\end{array}$$
We shall encounter this enumeration quite naturally in Section \ref{sec: configuration space circle}, where it is observed to count the set of cyclic configurations of $n$ colliding labeled particles on a circle.

The entries of this number triangle also appeared in connection with a certain word Hopf algebra, counting numbers of packed words of length $n$ and supremum $k$, see Remark 4.2 in \cite{Duchamp}.

\subsection{Additional Connections}

We relate in Appendix \ref{sec: combinatorial scattering equations} the linear relations which characterize the subspace spanned by characteristic functions of blades in dimension $1$ to a balancedness condition on weighted directed graphs, as studied by T. Lam and A. Postnikov in their work on polypositroids \cite{LamPostnikov}.  By comparison with the results of \cite{Berget}, blades are also related to zonotopal algebras \cite{ArdilaPostnikov,Berget,HoltzAmos}, through Gale duality for the type $A$ reflection arrangement.  

In terms of representation theory of the symmetric group, modules spanned by characteristic functions of blades are constructed plethystically using the higher Eulerian representations, which arise as restrictions to $\symm_n$ of representations of $\symm_{n+1}$ of dimensions the Stirling numbers of the first kind, as studied by S. Whitehouse in \cite{WhitehouseInduced}.  For further historical discussion we refer to the introduction  of \cite{EarlyReiner}, where together with V. Reiner we studied the cohomology ring, denoted there $\mathcal{V}^n$, of the configuration space of $n$ points in $SU(2)$, modulo the diagonal action of $SU(2)$.  In fact, it turns out that this cohomology ring is related to (a degeneration of) the Minkowski algebra of blades.  The discovery of this geometric connection to blades helped to determine the presentation of  $\mathcal{V}^n$ in \cite{EarlyReiner}.  See also \cite{Brauner}.

A key observation is that as these $n$ (closed) cones can be seen to cover the whole ambient space (see Corollary \ref{cor: cyclic sum identity}), coinciding only on common facets, they form what is called a complete fan.  In the case of the blade $((1,2,\ldots, n))$ it can be obtained as the normal fan to the simplex with vertices $e_1-e_n,e_n-e_{n-1},\ldots, e_2-e_1$.  

In \cite{EarlyCanonicalBasis} and here we study spaces of characteristic functions of plates and blades \textit{modulo} higher codimension faces; it is tempting to ask if there is there a compatible Lie algebra in the direction of \cite{Racinet}, see also \cite{FrancisBrown}, of some rational functions modulo products.  Such products would correspond in our setting to higher codimension faces of cones.

We point out an interesting apparent connection between blades and certain elliptic functions which appear in string theory \cite{MafraSchlotterer}, where the blade relations are analogous to certain limits of the so-called Fay identities\footnote{We thank Oliver Schlotterer for this observation.}.  We would like also to mention another connection, with the related work \cite{MafraSchlotterer2014}.  Here, modulo the Fay identities, the  \textit{canonicalization} relations hold among pseudoinvariants.  These will correspond in our setting to straightening relations into the canonical basis for graduated functions of blades.

The beauty and utility of the exponential map used for the cohomology ring of the configuration space of points in $SU(2)$, in Section \ref{sec: graded cohomology ring}, suggests a connection to some of the combinatorial properties of certain exponential generating functions from \cite{BMNS}.  Finally, the examples and discussion in Appendix \ref{sec:Leading singularity} suggest a new class of identifications for non-planar on-shell diagrams beyond the well-known square move which deserves further attention; see \cite{DeltaAlgebra2019}, where the study was initiated.

	\section{Permutohedral Plates}
	Let us fix some notation.

We denote $x_S = \sum_{i\in S} x_i$ for any proper nonempty subset $S$ of $\{1,\ldots, n\}$, and abbreviate $x_{S_1\cup \cdots \cup S_k}$ as $x_{S_1\cdots S_k}$ for $k$ disjoint subsets $S_1,\ldots, S_k$ of $\{1,\ldots, n\}$.  Set $V_0^n = \{x\in \mathbb{R}^n: \sum_{i=1}^nx_i=0\}$.  Let  $e_1,\ldots, e_n$ the usual basis for $\mathbb{R}^n$.  With $J$ a nonempty subset of $\{1,\ldots, n\}$, set $e_J=\sum_{j\in J} e_j$, and let $\bar{e}_J = \sum_{j\in J} e_j - \frac{\vert J\vert }{n}(\sum_{i=1}^n e_i)$, be the projection of $e_J$ onto the plane $V_0^n$.  Denote by $\pi_1 \oplus \pi_2 = \{u+v:u\in \pi_1,\ v\in \pi_2\}$ the Minkowski sum of the polyhedra $\pi_1,\pi_2\subseteq V_0^n$.  Put
$$\lbrack S_i,S_j\rbrack = \left\{\sum_{a\in S_i\cup S_j} t_a\bar{e}_a\in V_0^n: t_{S_i}\ge 0\right\},$$
that is the upper half of the subspace spanned by $\{\bar{e}_a:a\in S_i\cup S_j\}$.

A polyhedral cone is a subset of some $\mathbb{R}^n$ which is cut out by a set of inequalities of the form $\{x\in V_0^n: \sum_{j=1}^n a_{ij} x_j\ge 0,\ i=1,\ldots, m\}$ for a given coefficient matrix $(a_{ij})\in\mathbb{R}^{m\times n}$.  For a polyhedral cone $\pi$, denote by $\lbrack \pi\rbrack $ its characteristic function, that is, it evaluates to $\lbrack \pi\rbrack(x) = 1$ if $x\in\pi$ and otherwise $\lbrack \pi\rbrack(x) = 0$.  

Recall that an ordered set partition (OSP) is an ordered list $\mathbf{S}=(S_1,\ldots, S_k)$ of disjoint subsets of $\{1,\ldots, n\}$ such that $\bigsqcup_{i=1}^k S_i = \{1,\ldots, n\}$.

\begin{defn}\label{defn: permutohedral cone}
	A polyhedral cone $\Pi$ in $V_0^n$ is \textit{permutohedral} if it can expressed as a Minkowski sum of subspaces $\lbrack T_i\rbrack$ and a sequence of half subspaces $\lbrack S_{a},S_{{a+1}}\rbrack$,
	$$\Pi=\lbrack T_1\rbrack \oplus \cdots \oplus  \lbrack T_\ell\rbrack\oplus \lbrack\mathbf{S}_1\rbrack \oplus \cdots \oplus \lbrack \mathbf{S}_k\rbrack,$$
	for disjoint subsets $T_1,\ldots, T_\ell$ and ordered set partitions $\mathbf{S}_1,\ldots,\mathbf{S}_k$ of respectively disjoint subsets $U_1,\ldots, U_k$ of $\{1,\ldots, n\}\setminus \left(\bigcup_{i=1}^\ell T_i\right)$.  In the terminology from bel\textit{}ow, the cone $\Pi$ is a Minkowski sum of \textit{tangent cones} to the regular permutohedron.
	
	Further, $\Pi$ is a \textit{generalized permuhedral} cone if it is a Minkowski sum 
	$$\Pi=\lbrack T_1\rbrack \oplus \cdots \oplus  \lbrack T_\ell\rbrack\oplus \lbrack S_{i_1},S_{j_1}\rbrack \oplus \cdots \oplus \lbrack S_{i_k},S_{j_k}\rbrack,$$
	where $S_{i_a}\cap S_{j_a} = \emptyset$ for $a=1,\ldots, k$ and where $\{T_1,\ldots, T_\ell\}$ is a set partition of the complement in $\{1,\ldots, n\}$ of the union of all subsets $S_{i_a}, S_{j_a}$,
	$$\{1,\ldots, n\} \setminus \left(\bigcup_{a=1}^k S_{i_a}\cup S_{j_a}\right).$$
\end{defn}
In Definition \ref{defn: permutohedral cone}, without loss of generality we could further require that the Minkowski sum be \textit{reduced}, that is, we have the property that for each pair $\{S_{\ell},S_{m}\}$, the sets $S_{\ell},S_{m}$ are either disjoint or equal.  For without this assumption, we would have the simplification for example 
$$\lbrack 135,246\rbrack \oplus \lbrack 127,345\rbrack = \lbrack 1234567\rbrack.$$
For detailed examples of generalized permutohedral cones and their functional representations in this context, see Appendix A.2 in \cite{EarlyCanonicalBasis}.

Recall that the tangent cone $C$ to a face $F$ of a polyhedron $P$ is defined by
$$C = \{x+\lambda(y-x):x\in F,y\in P, \text{ and }\lambda\ge0\}.$$

We single out the set of tangent cones to faces of the regular permutohedron, which were studied as \textit{plates} by Ocneanu.  See \cite{EarlyCanonicalBasis} for details of our construction and related results for permutohedral cones, as well as a canonical basis for the space spanned by characteristic functions of such tangent cones.  Let $(S_1,\ldots, S_k)$ be an ordered set partition of $\{1,\ldots, n\}$. Recall the notation $\lbrack S_1,\ldots, S_k\rbrack$ for the plate, which is the cone in $V_0^n$ cut out by the system of facet inequalities
\begin{eqnarray*}
	x_{S_1} & \ge & 0\\
	x_{S_1S_2} & \ge & 0\\
	& \vdots &\\	
	x_{S_1S_2\cdots S_{k-1}} & \ge & 0\\
	\sum_{i=1}^n x_i& = & 0.
\end{eqnarray*}
This is a permutohedral cone, since it can be expressed as the Minkowski sum of half spaces
$$\lbrack S_1,S_2\rbrack \oplus \lbrack S_2,S_3\rbrack \oplus\cdots\oplus \lbrack S_{k-1},S_k\rbrack.$$

It is a standard result that this is the tangent cone to the face of the usual permutohedron labeled by the ordered set partition $(S_1,\ldots, S_k)$ of $\{1,\ldots, n\}$, see for example \cite{PostnikovPermutohedra}.  In \cite{EarlyCanonicalBasis} the same was proved for tree graphs.

\begin{prop}
	Let $\Pi$ be a generalized permutohedral cone which is labeled by a directed tree graph with edge set $\{(i_1,j_1),\ldots, (i_{k},j_k)\}$ for some $k\le $n, with Minkowski sum decomposition
	$$\Pi  = \lbrack S_{i_1},S_{j_1}\rbrack \oplus \cdots \oplus \lbrack S_{i_k},S_{j_k}\rbrack.$$
	Then it can be expanded as a signed sum of characteristic functions of tangent cones to faces of the usual permutohedron.
\end{prop}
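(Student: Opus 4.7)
The plan is to proceed by induction on the complexity of the directed tree $T$ labeling $\Pi$, measured by a lexicographic pair combining the total branching $\sum_v (\deg(v) - 2)^+$ and an auxiliary invariant such as the sum of distances from branching vertices to their nearest leaves. The base case is a path, in which case $\Pi$ is itself the plate $\lbrack S_1, \ldots, S_{k+1}\rbrack$ and there is nothing to show. For a tree with a branching vertex I would apply a local identity at such a vertex to reduce the complexity.

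\textbf{The key local identity.} If $v$ is a branching vertex with two outgoing edges to $u_1, u_2$ (the case of two incoming edges is symmetric), then
$$\mathbf{1}_{\lbrack S_v, S_{u_1}\rbrack \oplus \lbrack S_v, S_{u_2}\rbrack} = \mathbf{1}_{\lbrack S_v, S_{u_1}, S_{u_2}\rbrack} + \mathbf{1}_{\lbrack S_v, S_{u_2}, S_{u_1}\rbrack} - \mathbf{1}_{\lbrack S_v, S_{u_1} \cup S_{u_2}\rbrack}.$$
I would prove this by inclusion-exclusion: in $V_0^n$, the constraint $x_{S_v}\ge 0$ together with $\sum_i x_i = 0$ forces $x_{S_{u_1}} + x_{S_{u_2}} \le 0$, so at least one of $x_{S_{u_1}}, x_{S_{u_2}}$ is nonpositive, which gives that the union of the two length-$3$ plates equals the single plate $\lbrack S_v, S_{u_1}\cup S_{u_2}\rbrack$; and their intersection equals the Minkowski sum on the left, verified by splitting any point in the intersection into summands lying in the two half-spaces. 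Combining via $\mathbf{1}_A + \mathbf{1}_B = \mathbf{1}_{A\cup B} + \mathbf{1}_{A\cap B}$ yields the stated identity.

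\textbf{Propagation, induction, and the main obstacle.} Minkowski-summing both sides of the local identity with the remaining half-space factors $R$ of $\Pi$ produces $\mathbf{1}_\Pi = \mathbf{1}_{\Pi_1} + \mathbf{1}_{\Pi_2} - \mathbf{1}_{\Pi_3}$, where each $\Pi_i$ is a tree-labeled generalized permutohedral cone whose labeling tree is strictly simpler than $T$: in $\Pi_3$ the vertices $u_1, u_2$ have been merged into a single vertex labeled $S_{u_1}\cup S_{u_2}$, reducing the vertex count, while in $\Pi_1$ (resp.\ $\Pi_2$) the edge $(v, u_2)$ has been replaced by $(u_1, u_2)$ (resp.\ $(v, u_1)$ by $(u_2, u_1)$), migrating a branch off of $v$ toward the leaves. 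Applying the identity repeatedly, one eventually reaches a branching vertex with at least two leaf neighbors, and then the identity strictly decreases the total branching $\sum_v(\deg(v)-2)^+$; the induction on the complexity pair then closes. By the inductive hypothesis each $\mathbf{1}_{\Pi_i}$ is a signed sum of plate characteristic functions, and so is $\mathbf{1}_\Pi$. The principal technical obstacle is verifying that the local identity actually survives Minkowski sum with $R$, i.e., that $(A\cap B)\oplus R = (A\oplus R)\cap (B\oplus R)$ for the two length-$3$ plates $A, B$. This distributivity fails for arbitrary cones, but holds here: the extra facet inequalities that Minkowski sum with $R$ imposes on $(A\oplus R)\cap (B\oplus R)$ reduce to constraints already implied by those of $(A\cap B)\oplus R$, as one confirms by computing the defining inequalities of the Minkowski sums explicitly and noting that the supports of $R$'s constraints decouple cleanly across $S_v$ from those in $S_{u_1}\cup S_{u_2}$.
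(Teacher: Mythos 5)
Your overall strategy --- resolve a branching vertex with a local inclusion--exclusion identity, then recurse --- is the right one, and the local identity
$$\mathbf{1}_{\lbrack S_v, S_{u_1}\rbrack \oplus \lbrack S_v, S_{u_2}\rbrack} = \mathbf{1}_{\lbrack S_v, S_{u_1}, S_{u_2}\rbrack} + \mathbf{1}_{\lbrack S_v, S_{u_2}, S_{u_1}\rbrack} - \mathbf{1}_{\lbrack S_v, S_{u_1} \cup S_{u_2}\rbrack}$$
is correct (it is a cousin of the triangulation identity in Proposition~\ref{prop: triangulation identity higher codim}). However, the step you flag as the ``principal technical obstacle'' is actually the one that is free, while the step you wave away is the real gap. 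The propagation to $\Pi$ does not require any ad hoc distributivity verification: since the Euler-characteristic convolution $\bullet$ of Theorem~\ref{thm: valuation and convolution} is bilinear and satisfies $\mathbf{1}_{C_1}\bullet\mathbf{1}_{C_2}=\mathbf{1}_{C_1\oplus C_2}$ for polyhedral cones, convolving the displayed identity with $\mathbf{1}_R$ immediately yields $\mathbf{1}_\Pi = \mathbf{1}_{\Pi_1}+\mathbf{1}_{\Pi_2}-\mathbf{1}_{\Pi_3}$ (and since Minkowski sum always distributes over unions of sets, the equality $(A\cap B)\oplus R = (A\oplus R)\cap(B\oplus R)$ \emph{follows} rather than needing to be checked). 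Your ``decoupling'' heuristic is in fact shaky as stated, because $R$ includes factors $\lbrack S_v, S_w\rbrack$ or $\lbrack S_{u_i}, S_w\rbrack$ from the other edges incident to $v, u_1, u_2$, so $R$ is not supported away from $S_v\cup S_{u_1}\cup S_{u_2}$.

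The genuine gap is the termination of the induction. The complexity $\sum_v(\deg v-2)^+$ does \emph{not} decrease in $\Pi_1$ or $\Pi_2$ whenever $u_1$ (resp.\ $u_2$) already has degree at least $2$: the drop at $v$ is exactly cancelled by the rise at $u_1$. Your proposed tiebreaker, ``sum of distances from branching vertices to their nearest leaves,'' is not shown to decrease either, and the assertion that ``applying the identity repeatedly one eventually reaches a branching vertex with at least two leaf neighbors'' is precisely what a correct potential function must demonstrate, not a consequence one may assume. A measure that actually works must be exhibited --- for instance, always choose a branching vertex $v$ whose non-parent subtrees (after rooting at a fixed leaf) are all paths, and use the number of vertices together with a sum of co-depths, noting that in $\Pi_1,\Pi_2$ a nonempty subtree is pushed strictly deeper while in $\Pi_3$ the vertex count drops. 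Relatedly, you should also remark that in $\Pi_3$ the remaining factors of $R$ attached to $u_1$ or $u_2$ must be re-labeled by the merged block $S_{u_1}\cup S_{u_2}$; this simplification (which is easy to check via the dual) is needed for $\Pi_3$ to again be a tree cone in the sense required by the inductive hypothesis. (The paper itself defers the proof to \cite{EarlyCanonicalBasis}, so I cannot compare your argument with an explicit one here, but the above are the points you would need to tighten.)
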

In fact the expansion result can be extended to the expansion of generalized permutohedral cones as signed sums of characteristic functions of permutohedral cones.

Recall from \cite{EarlyCanonicalBasis} the notation $\hat{\mathcal{P}}^n$ for the $\mathbb{K}$-linear span of all characteristic functions $\lbrack\lbrack S_1,\ldots, S_k\rbrack\rbrack$ of plates $\lbrack S_1,\ldots, S_k\rbrack$, as $(S_1,\ldots, S_k)$ ranges over all ordered set partitions of $\{1,\ldots, n\}$.  Here we may take $\mathbb{K} = \mathbb{Q}$ or $\mathbb{K} = \mathbb{C}$.

\begin{defn}\label{defn: duality for cones}
	Let $C$ be a polyhedral cone in $V_0^n$.  The dual cone to $C$, denoted $C^\star$, is defined by the equation 
	$$C^\star=\{y\in V_0^n:y\cdot x\ge 0\text{ for all } x\in C\}.$$
\end{defn}

In \cite{EarlyCanonicalBasis} it was an essential property that the plate $\lbrack S_1,\ldots, S_k\rbrack$ is \textit{dual} (in the sense of convex geometry, see \cite{BarvinokPommersheim}) to the face of the arrangement of type $A_{n-1}$ reflection hyperplanes, given by
$$\{x\in V_0^n: x_{(S_1)}\ge \cdots\ge x_{(S_k)} \},$$
where $x_{(S)}$ is shorthand notation for $x_{i_1}=\cdots =x_{i_{\vert S\vert}}$, given that $S=\{i_1,\ldots, i_{\vert S\vert}\}$

Let us collect two important results from \cite{BarvinokPommersheim}.  It is immediate from Definition \ref{defn: permutohedral cone} that the conical hull of two permutohedral cones is a permutohedral cone; therefore in Theorem \ref{thm: valuation and convolution} the convolution product $\bullet $ in (2) is defined on the subring of characteristic functions of permutohedral cones, $\hat{\mathcal{P}}^n$.

\begin{defn}\label{defn: valuation}
	A linear transformation $\hat{\mathcal{P}}^n\rightarrow V$, where $V$ is a vector space, is called a \textit{valuation}.  
\end{defn}

We shall need Theorem 2.5 of \cite{BarvinokPommersheim}, which defines the convolution of two polyhedral cones with respect to the Euler characteristic, to prove relations with respect to the convolution product $\bullet$.
\begin{thm}[\cite{BarvinokPommersheim}, Theorem 2.5] \label{thm: valuation and convolution}
		There is a unique bilinear operation, denoted here $\bullet:\hat{\mathcal{P}}^n\times \hat{\mathcal{P}}^n\rightarrow \hat{\mathcal{P}}^n$, called convolution, such that 
		$$\lbrack \pi_1\rbrack \bullet \lbrack \pi_2\rbrack = \lbrack \pi_1\oplus \pi_2\rbrack$$
		for any two permutohedral cones $\pi_1,\pi_2$.
\end{thm}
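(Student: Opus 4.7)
The plan is to realize $\bullet$ as the \emph{Euler-characteristic convolution} on Barvinok's polyhedral algebra, restricted to $\hat{\mathcal{P}}^n$, and then to verify three things in order: (i) Minkowski sum preserves the subclass of permutohedral cones, so the target is indeed $\hat{\mathcal{P}}^n$; (ii) the construction is bilinear and well-defined on linear combinations; (iii) bilinearity together with its values on the generators $[\pi]$ forces uniqueness.

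First I would observe that Minkowski sum of two permutohedral cones is permutohedral: concatenating the defining lists of summands $\lbrack T_i\rbrack$ and $\lbrack S_a, S_{a+1}\rbrack$ from Definition \ref{defn: permutohedral cone} for $\pi_1$ and $\pi_2$ yields an expression of $\pi_1\oplus\pi_2$ in the same form (after possibly merging or extending the ambient indexing). Hence $[\pi_1\oplus\pi_2]\in\hat{\mathcal{P}}^n$, and the rule $[\pi_1]\bullet[\pi_2]:=[\pi_1\oplus\pi_2]$ has the correct target.

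For existence, I would use the fact that the combinatorial Euler characteristic $\chi$ is a valuation on the lattice of closed convex polyhedra, and define, on the whole polyhedral algebra,
$$(f\star g)(z)\;=\;\int_{V_0^n} f(x)\,g(z-x)\,d\chi(x),$$
where integration is with respect to $\chi$. Because $\chi$ is a valuation, this operation is automatically bilinear and respects every linear relation among indicator functions; in particular, it descends to a well-defined bilinear pairing on $\hat{\mathcal{P}}^n$. The content of the theorem is then the identity $[\pi_1]\star[\pi_2]=[\pi_1\oplus\pi_2]$ for the special class of permutohedral cones, which one checks by showing that for $z\in\pi_1\oplus\pi_2$ the fiber $\pi_1\cap(z-\pi_2)$ is a pointed polyhedron with $\chi=1$, and the empty set otherwise. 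For uniqueness, any bilinear $\bullet$ on $\hat{\mathcal{P}}^n$ agreeing with Minkowski sum on the spanning set $\{[\pi]:\pi\text{ permutohedral}\}$ is determined: once values on a spanning set are fixed, bilinearity extends them in exactly one way to the whole space, provided the values are consistent with the linear relations among the generators, which is precisely what the Euler-characteristic construction supplies.

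The main obstacle is the presence of lineality subspaces $\lbrack T\rbrack$ in permutohedral cones: the indicator function of a full linear subspace has Euler characteristic $0$, and naive fiber integrals can fail to equal the Minkowski-sum indicator on such factors. I would handle this by first reducing to the pointed case via the decomposition $[\lbrack T\rbrack]=[\lbrack T\rbrack_+]+[\lbrack T\rbrack_-]-[\lbrack T\cap(-T)\rbrack]$ along each one-dimensional lineality direction, verifying the Minkowski identity for pointed cones where $\chi=1$ on nonempty fibers, and then summing back using bilinearity. The consistency of this reassembly is exactly what the valuation property of $\chi$ guarantees, and confirms that the same $\bullet$ recovered from the integral formula agrees set-theoretically with Minkowski sum on every pair of permutohedral cones.
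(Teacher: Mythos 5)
The paper does not prove this result; it imports it verbatim as Theorem~2.5 of Barvinok--Pommersheim and only remarks that the proof there rests on the Euler characteristic being the unique valuation $\mu$ with $\mu([\pi])=1$ for every nonempty closed convex polyhedron $\pi$. Your overall strategy --- closure of permutohedral cones under Minkowski sum, Euler-characteristic convolution $(f\star g)(z)=\int f(x)g(z-x)\,d\chi$, identification of the fiber $\pi_1\cap(z-\pi_2)$, and uniqueness from bilinearity on a spanning set --- is exactly the route the paper indicates and is correct in outline.

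However, your final paragraph rests on a misconception that, if carried out, would break the argument. You assert that ``the indicator function of a full linear subspace has Euler characteristic $0$'' and then propose a decomposition into half-subspaces to repair a supposed failure of the fiber integral. But the valuation $\mu$ in force here (explicitly characterized in the paper's remark following the theorem) satisfies $\mu([\pi])=1$ for \emph{every} nonempty closed convex polyhedron, including unbounded ones and linear subspaces. The fiber $\pi_1\cap(z-\pi_2)$ is always a closed convex polyhedron, nonempty precisely when $z\in\pi_1\oplus\pi_2$, so $\mu$ of the fiber is already $1$ or $0$ with no special handling of lineality required. The value $0$ you have in mind is the compactly supported \emph{topological} Euler characteristic of an odd-dimensional subspace, which is a different invariant; replacing $\mu$ by that invariant would in fact make the convolution identity false (e.g.\ it assigns $-1$ to a line, so $[\,\{0\}\,]\star[\mathbb{R}]$ would not equal $[\mathbb{R}]$). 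Drop the last paragraph entirely: the ``obstacle'' does not exist for the valuation actually used, and the rest of your argument already closes the proof.
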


The proof given in \cite{BarvinokPommersheim} of Theorem \ref{thm: valuation and convolution} relies a linear map called the \textit{Euler characteristic}, which is proven there to be the unique valuation 
$\mu:\hat{\mathcal{P}}^n\rightarrow \mathbb{Q}$, such that $\mu(\lbrack\pi\rbrack)=1$ for any nonempty polyhedron $\pi$.

Finally, we specialize a result from \cite{BarvinokPommersheim}, which is a statement about the wider class of all polyhedral cones, to permutohedral cones.

\begin{thm}[\cite{BarvinokPommersheim}, Theorem 2.7]\label{thm: duality operator}
	There exists a valuation $\mathcal{D}$ on the space of characteristic functions of polyhedral cones such that 
	$$\mathcal{D}(\lbrack \pi\rbrack) = \lbrack \pi^\star\rbrack.$$
\end{thm}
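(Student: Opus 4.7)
The plan is to appeal to a Groemer/McMullen-style extension principle: a function $f$ defined on individual polyhedral cones in $V_0^n$ extends (uniquely) to a valuation on the linear span of their characteristic functions provided that for every hyperplane $H$ through the apex of $\pi$, with the induced decomposition $\pi_\pm=\pi\cap H_\pm$ and $\pi_0=\pi\cap H$, one has the inclusion-exclusion identity $f(\pi)=f(\pi_+)+f(\pi_-)-f(\pi_0)$. I would take $f(\pi):=\lbrack \pi^\star\rbrack$ and check this hypothesis, which would force $\mathcal{D}$ to exist and satisfy $\mathcal{D}(\lbrack\pi\rbrack)=\lbrack\pi^\star\rbrack$ simultaneously.

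To verify the hyperplane-cut identity, I would use the classical polar-duality rules for closed convex polyhedral cones: $(C_1\cap C_2)^\star = C_1^\star + C_2^\star$ (no closure needed, as polyhedrality is preserved), together with the direct computations $H_+^\star = \mathbb{R}_+ n$, $H_-^\star = -\mathbb{R}_+ n$, and $H^\star = \mathbb{R} n$, where $n$ is a normal vector to $H$. These yield
$$\pi_+^\star = \pi^\star + \mathbb{R}_+ n,\qquad \pi_-^\star = \pi^\star - \mathbb{R}_+ n,\qquad \pi_0^\star = \pi^\star + \mathbb{R} n.$$
Since $\pi_0^\star = \pi_+^\star \cup \pi_-^\star$ set-theoretically, the required identity $\lbrack \pi^\star\rbrack = \lbrack\pi_+^\star\rbrack+\lbrack\pi_-^\star\rbrack-\lbrack \pi_0^\star\rbrack$ reduces by standard two-set inclusion-exclusion to the set equality $\pi^\star = \pi_+^\star \cap \pi_-^\star$. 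This latter identity is the core geometric content, and I would prove it by writing $y\in \pi_+^\star\cap \pi_-^\star$ as $y = a + t_1 n = a' - t_2 n$ with $a,a'\in\pi^\star$ and $t_1,t_2\ge 0$, then using the observation that the component of $y$ along $n$ relative to $\pi^\star$ is pinned from above and below, forcing $y$ itself to lie in $\pi^\star$.

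The main obstacle will be the degenerate cases, and these are where I would invest most of my effort. Three situations require care: (i) the cone $\pi$ is lower-dimensional, so $\pi^\star$ contains a full linear subspace (the orthogonal complement of the affine hull of $\pi$); (ii) the hyperplane $H$ is tangent to $\pi$ along a proper face, so that one of $\pi_\pm$ collapses to $\pi_0=\pi$; and (iii) the normal $n$ already belongs to $\pi^\star$ (or its negative does), so that $\pi^\star+\mathbb{R}_+ n = \pi^\star$. In each degenerate configuration I would reverify the set identity $\pi^\star = \pi_+^\star\cap\pi_-^\star$ by hand; alternatively, one could perturb $H$ generically and argue by continuity that the identity of indicator functions in the polyhedral algebra is preserved under limits.

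Finally, once $\mathcal{D}$ is established as a valuation, it is immediate that $\mathcal{D}$ restricts appropriately to the subspace $\hat{\mathcal{P}}^n$ whenever polar duality sends permutohedral cones into an ambient valuation-closed subalgebra; this subtlety however is not needed for Theorem \ref{thm: duality operator} itself, which concerns the wider algebra of all polyhedral cones in $V_0^n$.
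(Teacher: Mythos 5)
This theorem is not proved in the paper: it is quoted with citation as Theorem~2.7 of Barvinok--Pommersheim, so there is no proof of the author's own to compare against. In that reference the valuation is built constructively rather than by an extension criterion: roughly, for each $y$ one forms a closed half-space $H_y$ defined by $\langle x,y\rangle\le -1$, notes that $f\mapsto \mu(f)-\mu(f\cdot\lbrack H_y\rbrack)$ is linear in $f$ (where $\mu$ is the Euler characteristic already used for Theorem~\ref{thm: valuation and convolution}), and checks that this quantity equals $\lbrack\pi^\star\rbrack(y)$ when $f=\lbrack\pi\rbrack$ for a closed polyhedral cone $\pi$, since $\pi\cap H_y$ is empty exactly when $y\in\pi^\star$ and is a nonempty closed polyhedron (Euler characteristic~$1$) otherwise. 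Linearity of $\mathcal{D}$ is then immediate. Your route via a Groemer-style extension criterion is a genuinely different and sound approach: it trades the explicit construction for the verification of a single inclusion--exclusion identity under hyperplane cuts.

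Two comments on the execution. First, your key identity $\pi^\star=\pi_+^\star\cap\pi_-^\star$ has a one-line proof that also makes all of your degenerate cases (i)--(iii) evaporate: since $\pi=\pi_+\cup\pi_-$ as sets regardless of how $H$ meets $\pi$, and since duality sends unions to intersections for arbitrary sets, one has directly $\pi^\star=(\pi_+\cup\pi_-)^\star=\pi_+^\star\cap\pi_-^\star$. The ``pinning'' convexity argument you sketch (that $y$ lies on the segment from $y-t_1n$ to $y+t_2n$, both endpoints in the convex set $\pi^\star$) is also correct, but the paragraph you planned to spend on degeneracies is then unnecessary. Second, the extension principle you invoke is, in the standard references, stated for bounded polytopes; its validity for unbounded polyhedral cones with a common apex is true but should be justified or referenced --- for instance by intersecting with the unit sphere to reduce to spherical polytopes, or by running the common-refinement/simplicial-subdivision argument behind Groemer's theorem directly for conical cells. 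That is the one point at which your argument is not self-contained.
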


We shall usually abuse notation and use $\star$ for the valuation $\mathcal{D}$, so the relation of Theorem \ref{thm: duality operator} becomes
$$\lbrack \pi\rbrack^\star = \lbrack \pi^\star\rbrack.$$

The essential property here is that $\star$ interchanges convolution with pointwise product:
$$(\lbrack C_1\rbrack \bullet \lbrack C_2\rbrack \bullet \cdots\bullet \lbrack C_\ell\rbrack)^\star =(\lbrack C_1\oplus \cdots\oplus C_\ell \rbrack)^\star = \lbrack C_1^\star\cap\cdots \cap  C_\ell^\star\rbrack= \lbrack C_1 \rbrack^\star \cdot \lbrack C_2\rbrack^\star \cdots \lbrack C_\ell\rbrack^\star,$$
where $\bullet$ is convolution and $\cdot$ is the pointwise product of characteristic functions.

We have the following easy but important identity.
\begin{prop}\label{prop:cyclic Minkowski sum identity closed plates}
	For any ordered set partition $(S_1,S_2,\ldots, S_k)$ of $\{1,\ldots, n\}$, we have
	$$\lbrack\lbrack S_1,S_2\rbrack\rbrack \bullet \lbrack\lbrack S_2,S_3\rbrack\rbrack \bullet \cdots \bullet \lbrack\lbrack S_k,S_1\rbrack\rbrack=\lbrack\lbrack S_1\cup \cdots\cup S_k\rbrack\rbrack.$$
\end{prop}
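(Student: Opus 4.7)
The plan is to apply Theorem \ref{thm: valuation and convolution} iteratively to convert the stated identity of characteristic functions into the equivalent set-theoretic identity of Minkowski sums,
$$\lbrack S_1,S_2\rbrack \oplus \lbrack S_2,S_3\rbrack \oplus \cdots \oplus \lbrack S_k,S_1\rbrack \;=\; \lbrack T\rbrack, \qquad T := S_1\cup\cdots\cup S_k,$$
all intermediate Minkowski sums being permutohedral cones in the sense of Definition \ref{defn: permutohedral cone}, and then to check this equality of sets directly. The containment $\subseteq$ is automatic: each factor $\lbrack S_i,S_{i+1}\rbrack$ sits inside the subspace spanned by $\{\bar{e}_a : a\in S_i\cup S_{i+1}\}$, which is a subset of $\lbrack T\rbrack$, and $\lbrack T\rbrack$ is a linear subspace, hence closed under finite Minkowski sums.

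For the reverse containment I would exhibit an explicit cyclic decomposition. Given an arbitrary $v = \sum_{a\in T}c_a\bar{e}_a\in \lbrack T\rbrack$, fix any constant $C\ge 0$ and set
$$v_i \;:=\; C\sum_{a\in S_i}\bar{e}_a \;+\; \sum_{a\in S_{i+1}}(c_a - C)\bar{e}_a$$
for each $i=1,\ldots,k$, with indices read cyclically so that $S_{k+1}=S_1$. Then $v_i$ lies in $\lbrack S_i,S_{i+1}\rbrack$: writing $v_i = \sum_{a\in S_i\cup S_{i+1}} t^{(i)}_a \bar{e}_a$ with $t^{(i)}_a = C$ for $a\in S_i$ and $t^{(i)}_a = c_a - C$ for $a\in S_{i+1}$, the defining inequality is $t^{(i)}_{S_i} = |S_i|C \ge 0$. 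A telescoping check confirms $v_1+\cdots+v_k = v$: a fixed $a\in T$ belongs to exactly one block $S_j$, so $\bar{e}_a$ receives contribution $C$ from $v_j$ and contribution $c_a - C$ from $v_{j-1}$ (and nothing from the remaining $v_i$'s), for a net coefficient of $c_a$.

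I do not expect a serious obstacle here, since the decomposition is completely explicit and the verifications are linear. The only points demanding care are the cyclic index bookkeeping, ensuring that every $a\in T$ is shared between exactly two consecutive factors $\lbrack S_{j-1},S_j\rbrack$ and $\lbrack S_j,S_{j+1}\rbrack$, and the mildly degenerate case in which $S_i\cup S_{i+1}=\{1,\ldots,n\}$, where the $\bar{e}_a$ are linearly dependent and the parametrization of $v_i$ by the $t^{(i)}_a$ is not unique; this causes no problem because membership in $\lbrack S_i,S_{i+1}\rbrack$ only asks for the existence of \emph{some} parametrization realizing $t_{S_i}\ge 0$, and the one supplied above already does the job.
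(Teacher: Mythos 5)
Your overall plan (convert to a Minkowski-sum identity via Theorem~\ref{thm: valuation and convolution}, then verify the equality of sets by exhibiting a decomposition of an arbitrary $v\in\lbrack T\rbrack$) is a genuinely different route from the paper, which instead dualizes: under $\star$ the convolution becomes a pointwise product, $\lbrack S_i,S_{i+1}\rbrack^\star=\{y\in V_0^n:y_{(S_i)}\ge y_{(S_{i+1})}\}$, and the cyclic chain of inequalities collapses to equalities, giving $\lbrack T\rbrack^\star$; dualizing back finishes. Your approach is salvageable and more elementary, but as written it has a concrete gap.

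The issue is the parametrization of $\lbrack S_i,S_j\rbrack$. Comparing with the definitions of $\lbrack S\rbrack$ and $\mu_{S_1,S_2}$ (both written with $e_a$, not $\bar{e}_a$, and with the non-vacuous constraint ``$\in V_0^n$''), one sees that $\lbrack S_i,S_j\rbrack$ is a half of the subspace $\lbrack S_i\cup S_j\rbrack=\{x\in V_0^n: x_b=0\ \forall\, b\notin S_i\cup S_j\}$: in the $\sum_{a}t_a\bar e_a$ parametrization one needs $\sum_a t_a=0$ \emph{in addition to} $t_{S_i}\ge 0$ (equivalently, use $e_a$ and impose membership in $V_0^n$). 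You read it instead as a half of $\mathrm{span}\{\bar e_a:a\in S_i\cup S_j\}$, which is one dimension too big whenever $S_i\cup S_j\subsetneq\{1,\dots,n\}$; already the paper's dual formula $\lbrack S_i,S_{i+1}\rbrack^\star=\{y:y_{(S_i)}\ge y_{(S_{i+1})}\}$ forces the smaller cone. This breaks both directions of your argument. For $\subseteq$: $\mathrm{span}\{\bar e_a:a\in S_i\cup S_{i+1}\}$ is \emph{not} contained in $\lbrack T\rbrack$ (e.g.\ $\bar e_a$ itself has nonzero coordinates outside $T$). For $\supseteq$: your $v_i$ has coefficient sum $\sum_a t^{(i)}_a=(\lvert S_i\rvert-\lvert S_{i+1}\rvert)C+c_{S_{i+1}}$, which is not $0$ for a fixed $C$, so $v_i$ picks up a nonzero coordinate in the $b$-slot for $b\notin S_i\cup S_{i+1}$ and fails to lie in $\lbrack S_i,S_{i+1}\rbrack$. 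Concretely with $n=4$, $S_1=\{1\},S_2=\{2\},S_3=\{3\}$, $v=(1,-1,0,0)$ gives $(v_1)_4=\tfrac14\neq 0$. The fix is to replace the single constant $C$ by block-dependent constants: set $p_i\ge 0$ with $p_i-p_{i-1}=c_{S_i}$ cyclically (solvable since $\sum_i c_{S_i}=0$; choose a large base value to make all $p_i\ge 0$) and put $t^{(i)}_a=p_i/\lvert S_i\rvert$ for $a\in S_i$, $t^{(i)}_a=c_a-p_{i+1}/\lvert S_{i+1}\rvert$ for $a\in S_{i+1}$, using $e_a$; then each $v_i$ is genuinely supported on $S_i\cup S_{i+1}$, sums to zero, has $t^{(i)}_{S_i}=p_i\ge 0$, and the $v_i$ telescope to $v$.
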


\begin{proof}
It is informative to check this by dualizing.  Then, applying $\star$ gives $\lbrack S_i,S_{i+1}\rbrack^\star = \{x\in V_0^n: x_{(S_i)}\ge x_{(S_{i+1})}\}$, where $x_{(S)}$ is shorthand for $x_{i_1}=\cdots  = x_{i_{\ell}}$ if $S=\{i_1,\ldots, i_\ell\}$.  As $\star$ interchanges convolution and pointwise product of characteristic functions, we have
\begin{eqnarray*}
	\left(\lbrack\lbrack S_1,S_2\rbrack\rbrack \bullet \lbrack\lbrack S_2,S_3\rbrack\rbrack\bullet \cdots \bullet \lbrack\lbrack S_k,S_1\rbrack\rbrack\right)^\star & = & \left\{x\in V_0^n: x_{(S_1)}\ge x_{(S_2)}\ge \cdots \ge x_{(S_k)} \ge x_{(S_1)} \right\}\\
	& = & \lbrack \{(0,\ldots, 0)\}\rbrack\\
	& = & \lbrack \lbrack S_1\cup\cdots\cup S_k\rbrack\rbrack^\star,
\end{eqnarray*}
and since $\star$ is an involution for closed convex cones, applying $\star$ again completes the proof.	
\end{proof}

\section{Open plates}

For any ordered pair of disjoint subsets $(S_1,S_2)$ of $\{1,\ldots, n\}$, define 
$$\mu_{S_1,S_2} = \lbrack \lbrack S_1,S_2\rbrack\rbrack - \lbrack \lbrack S_1\rbrack\rbrack \bullet \lbrack\lbrack S_2\rbrack\rbrack,$$
the characteristic function of the set
$$\left\{\sum_{i\in S_1\cup S_2}t_i e_i\in V_0^n: t_{S_1}>0  \right\},$$
where the inequality is strict.  It will be understood that unless indicated otherwise, all products of $\mu$'s are convolution, using the Euler characteristic as the measure, see Theorem \ref{thm: valuation and convolution}.

Further denote by $1_{S}$ the characteristic function of the subspace
$$ \lbrack S\rbrack = \left\{\sum_{i\in S}t_i e_i\in V_0^n: t_i\in\mathbb{R}\right\}.$$
Then for any disjoint nonempty subsets $S_1,\ldots, S_\ell$ of $\{1,\ldots, n\}$, $1_{S_1}1_{S_2}\cdots 1_{S_\ell}$ is the characteristic function of the subspace 
$$\{x\in V_0^n: x_{S_i}=0 \text{ and } x_j=0 \text{ whenever } j\not\in S_1\cup\cdots\cup S_\ell \}.$$

\begin{prop}\label{prop: open plates properties}
	We have
	$$\mu_{S_1,S_2} ^2 = -\mu_{S_1,S_2},\ \text{and }\ \mu_{S_1,S_2} \mu_{S_2,S_1} = 0.$$
	More generally, we have the cycle identities
	$$\mu_{S_1,S_2}\mu_{S_2,S_3} \cdots \mu_{S_{k-1},S_k} \mu_{S_k,S_1} = 0$$
	for any ordered set partition $(S_1,\ldots, S_k)$ of a subset of $\{1,\ldots, n\}$.
\end{prop}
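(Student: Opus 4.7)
My plan is to apply the duality valuation $\star$ of Theorem \ref{thm: duality operator}, which is a linear involution on $\hat{\mathcal{P}}^n$ intertwining the convolution product $\bullet$ with pointwise multiplication $\cdot$. Because $\star^2 = \mathrm{id}$, each of the three claimed identities reduces to a pointwise identity between characteristic functions of dual cones, and the conclusion then follows from a simple cyclic-order obstruction on the coordinates.

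The first step is to compute $\mu_{S_1,S_2}^{\star}$ explicitly. Applying $\star$ linearly and using $(f\bullet g)^\star = f^\star\cdot g^\star$ together with $\lbrack\pi\rbrack^\star=\lbrack\pi^\star\rbrack$, I obtain
\[
\mu_{S_1,S_2}^\star \;=\; \lbrack\lbrack S_1,S_2\rbrack\rbrack^\star \;-\; \lbrack\lbrack S_1\rbrack\rbrack^\star\cdot\lbrack\lbrack S_2\rbrack\rbrack^\star.
\]
Here $[S_1,S_2]^\star = \{y\in V_0^n : y_{(S_1)} \geq y_{(S_2)}\}$ is the tangent cone to the corresponding face of the braid arrangement (as in the proof of Proposition \ref{prop:cyclic Minkowski sum identity closed plates}), while each $[S_i]$ is a linear subspace whose dual is $\{y : y_{(S_i)} = c_i\}$ (the orthogonal complement). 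Their pointwise product is the larger subspace $\{y : y_{(S_1)}=u,\ y_{(S_2)}=v\}$ with $u, v$ unrestricted. Since $[S_1,S_2]^\star$ is precisely the closed half $\{u\geq v\}$ sitting inside this subspace, the difference is
\[
\mu_{S_1,S_2}^\star \;=\; -\,\mathbf{1}_{\{y\in V_0^n\,:\, y_{(S_1)} < y_{(S_2)}\}},
\]
the \emph{negative} indicator of the open opposite cone.

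With this in hand, $\mu_{S_1,S_2}^{\bullet 2} = -\mu_{S_1,S_2}$ follows by dualizing and pointwise-squaring: writing $\mu_{S_1,S_2}^\star = -\mathbf{1}_A$, one gets $(\mu_{S_1,S_2}^\star)^2 = \mathbf{1}_A = -\mu_{S_1,S_2}^\star = (-\mu_{S_1,S_2})^\star$, and $\star$-involutivity finishes. For the general cycle identity (including the $k=2$ case $\mu_{S_1,S_2}\bullet\mu_{S_2,S_1}=0$), I dualize the entire convolution:
\[
\bigl(\mu_{S_1,S_2}\bullet\mu_{S_2,S_3}\bullet\cdots\bullet\mu_{S_k,S_1}\bigr)^\star \;=\; (-1)^k \prod_{i=1}^k \mathbf{1}_{\{y\,:\, y_{(S_i)} < y_{(S_{i+1})}\}}.
\]
A point $y$ in the support would satisfy the cyclic chain $y_{(S_1)} < y_{(S_2)} < \cdots < y_{(S_k)} < y_{(S_1)}$, a contradiction, so the pointwise product vanishes and applying $\star$ once more yields the vanishing convolution.

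The main obstacle is pinning down the correct sign in the formula for $\mu_{S_1,S_2}^\star$: one has to recognize that $[S_1,S_2]^\star$ sits inside $([S_1]\oplus[S_2])^\star$ as a closed half-cone, so that $\mu_{S_1,S_2}^\star$ is the negative indicator of the open opposite half rather than the positive indicator of the closed half itself. Once this sign is fixed, both the cyclic-chain impossibility and the pointwise squaring are essentially immediate, and the duality strategy packages all three claimed identities into a single short argument.
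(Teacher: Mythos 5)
Your proof is correct, and for the heart of the matter—the cycle identities—it follows the same duality strategy as the paper: dualize, observe that the supports of the $\mu_{S_i,S_{i+1}}^\star$ form an impossible strict cyclic chain $y_{(S_1)}<y_{(S_2)}<\cdots<y_{(S_1)}$, hence the pointwise product vanishes, and apply $\star$ once more. What you do differently is to compute the dual explicitly as $\mu_{S_1,S_2}^\star=-\mathbf{1}_{\{y:\ y_{(S_1)}<y_{(S_2)}\}}$ and then run \emph{all three} identities through this single formula. The paper instead proves $\mu_{S_1,S_2}^2=-\mu_{S_1,S_2}$ and $\mu_{S_1,S_2}\mu_{S_2,S_1}=0$ by direct algebraic expansion in $\hat{\mathcal{P}}^n$ (using $\lbrack\lbrack S_1,S_2\rbrack\rbrack\bullet\lbrack\lbrack S_1,S_2\rbrack\rbrack=\lbrack\lbrack S_1,S_2\rbrack\rbrack$ and $\lbrack\lbrack S_1,S_2\rbrack\rbrack\bullet\lbrack\lbrack S_2,S_1\rbrack\rbrack=\lbrack\lbrack S_1\cup S_2\rbrack\rbrack$), reserving duality only for the general cycle. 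Your unified route is arguably cleaner and also more precise on one point the paper glosses over: $\mu_{S_1,S_2}^\star$ is not itself the characteristic function of an open half-space (and when the blocks $S_i$ are not singletons it is only relatively open inside the subspace where $y$ is constant on each block), but rather the \emph{negative} of such an indicator, and this sign is exactly what produces $\mu^2=-\mu$ after pointwise squaring. The tradeoff is that the paper's algebraic expansion of the first two identities is entirely self-contained and doesn't require identifying the dual cone, whereas your approach front-loads the geometric computation but then makes all three claims fall out uniformly.
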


\begin{proof}

	We supply the algebraic proofs for the identities $\mu_{S_1,S_2}^2 = -\mu_{S_1,S_2}$ and $\mu_{S_1,S_2}\mu_{S_2,S_1} = 0$.  We have
	\begin{eqnarray*}
		\mu_{S_1,S_2}^2 & = & (\lbrack\lbrack S_1,S_2\rbrack\rbrack -\lbrack\lbrack S_1\rbrack\rbrack \bullet \lbrack\lbrack S_2\rbrack\rbrack)^2 = (\lbrack\lbrack S_1,S_2\rbrack\rbrack^2 -2\lbrack\lbrack S_1,S_2\rbrack\rbrack +\lbrack\lbrack S_1\rbrack\rbrack^2 \bullet \lbrack\lbrack S_2\rbrack\rbrack^2)\\
		& = & (\lbrack\lbrack S_1,S_2\rbrack\rbrack -2\lbrack\lbrack S_1,S_2\rbrack\rbrack +\lbrack\lbrack S_1\rbrack\rbrack \bullet \lbrack\lbrack S_2\rbrack\rbrack) = -(\lbrack\lbrack S_1,S_2\rbrack\rbrack - \lbrack\lbrack S_1\rbrack\rbrack \bullet \lbrack\lbrack S_2\rbrack\rbrack)\\
		& = & -\mu_{S_1,S_2},
	\end{eqnarray*}
	and
	\begin{eqnarray*}
		\mu_{S_1,S_2}\mu_{S_2,S_1} & = & (\lbrack\lbrack S_1,S_2\rbrack\rbrack -\lbrack\lbrack S_1\rbrack\rbrack \bullet \lbrack\lbrack S_2\rbrack\rbrack)(\lbrack\lbrack S_2,S_1\rbrack\rbrack -\lbrack\lbrack S_1\rbrack\rbrack \bullet \lbrack\lbrack S_2\rbrack\rbrack) \\
		& = & \lbrack\lbrack S_1,S_2\rbrack\rbrack \bullet \lbrack\lbrack S_2,S_1\rbrack\rbrack - \left(\lbrack\lbrack S_1,S_2\rbrack\rbrack + \lbrack\lbrack S_2,S_1\rbrack\rbrack\right) + \lbrack\lbrack S_1\rbrack\rbrack \bullet \lbrack\lbrack S_2\rbrack\rbrack\\
		& = & \lbrack\lbrack S_1 \cup S_2\rbrack\rbrack -\left(\lbrack\lbrack S_1\cup S_2\rbrack\rbrack + \lbrack\lbrack S_1\rbrack\rbrack \bullet \lbrack\lbrack S_2\rbrack\rbrack\right) + \lbrack\lbrack S_1\rbrack\rbrack \bullet \lbrack\lbrack S_2\rbrack\rbrack\\
		& = & 0.
	\end{eqnarray*}
	The cycle identities can be seen in general through an application of duality for polyhedral cones.  Then, as the dual $\mu_{S_i,S_j}^\star$'s are open half-spaces through the origin in $V_0^n$, and the intersection of the supports of the characteristic functions of the duals $\mu_{S_1,S_2}^\star ,\ldots, \mu_{S_{k},S_1}^\star $'s is empty, their (pointwise) product is identically zero.  Further, as $\star$ is an involution on the characteristic functions  $\lbrack\lbrack S_1,S_2\rbrack\rbrack$ and $1_S$, we have
	\begin{eqnarray*}
		(\mu_{S_1,S_2}^\star)^\star & = & (\lbrack\lbrack S_1,S_2\rbrack\rbrack^\star - 1_{S_1}^\star1_{S_2}^\star)^\star \\
		& = & (\lbrack\lbrack S_1,S_2\rbrack\rbrack^\star)^\star - (1_{S_1}^\star)^\star(1_{S_2}^\star)^\star\\
		& = &\lbrack\lbrack S_1,S_2\rbrack\rbrack - 1_{S_1}1_{S_2}\\
		& = & \mu_{S_1,S_2}.
	\end{eqnarray*}
	It follows that
	\begin{eqnarray*}
	\mu_{S_1,S_2}\cdots\mu_{S_{k},S_1} & = & (\mu_{S_1,S_2}^\star \cdots\mu_{S_{k},S_1}^\star)^\star\\
	& = & 0.
	\end{eqnarray*}
\end{proof}
	Denote by $e_j(x_1,\ldots, x_k)$ the $j$th elementary symmetric function in the variables $x_1,\ldots, x_k$, obtained from the generating function
	$$(1+tx_1)(1+tx_2)\cdots (1+tx_k) = \sum_{j=0}^k e_j(x_1,\ldots, x_k)t^j.$$
	\begin{cor}\label{cor: alternating expansion graduated blade}
		We have
		$$\lbrack(S_1,\ldots, S_k)\rbrack = \lbrack\lbrack S_1\cup \cdots\cup S_k\rbrack\rbrack  + \sum_{j=0}^{k-1} (-1)^{(k-2)-j} e_j(\lbrack\lbrack S_1,S_2\rbrack\rbrack , \lbrack\lbrack S_2,S_3\rbrack\rbrack,\ldots, \lbrack\lbrack S_k,S_1\rbrack\rbrack).$$
	\end{cor}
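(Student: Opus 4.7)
The plan is to extract the identity directly from the cycle relation of Proposition \ref{prop: open plates properties}, combined with Proposition \ref{prop:cyclic Minkowski sum identity closed plates}. Write $A_i:=\lbrack\lbrack S_i,S_{i+1}\rbrack\rbrack$ and $B_i:=1_{S_i}1_{S_{i+1}}$, so $\mu_{S_i,S_{i+1}}=A_i-B_i$ and the cycle identity reads $\bullet_{i=1}^k(A_i-B_i)=0$.

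Multilinearity of convolution expands this into
\[
0 \;=\; \sum_{T\subseteq\{1,\ldots,k\}}(-1)^{k-|T|}\bigl(\bullet_{i\in T}A_i\bigr)\bullet\bigl(\bullet_{j\notin T}B_j\bigr),
\]
in which the $T=\{1,\ldots,k\}$ summand yields $\lbrack\lbrack S_1\cup\cdots\cup S_k\rbrack\rbrack$ by Proposition \ref{prop:cyclic Minkowski sum identity closed plates}. I would simplify each mixed summand using two absorption rules: first, $A_i\bullet 1_{S_i}=A_i$ and $A_i\bullet 1_{S_{i+1}}=A_i$, which hold because $\lbrack S_i\rbrack$ and $\lbrack S_{i+1}\rbrack$ lie inside the linear span of the plate $A_i$; second, the telescoping identity $1_{S_a}1_{S_b}\bullet 1_{S_b}1_{S_c}=1_{S_a}1_{S_b}1_{S_c}$ for consecutive $B$-factors. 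Organizing $T^c$ into its maximal cyclic arcs, the $B$-factors along each arc telescope to a single multi-index subspace indicator, whose endpoint factors are then absorbed by the $A$-factors flanking the arc; what survives over each arc can be folded into a cyclic-shift plate via Proposition \ref{prop:cyclic Minkowski sum identity closed plates} applied on that arc.

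Collecting the resulting contributions by cardinality $|T|=j$, the sums $\bullet_{i\in T}A_i$ recombine into $e_j(A_1,\ldots,A_k)=\sum_{|I|=j}\bullet_{i\in I}A_i$. Isolating the $T=\emptyset$ term---which, by the construction of the graduated blade as a piecewise constant function on $V_0^n$ built from the same cyclic data, I would match (up to the overall sign $(-1)^k$) with $\lbrack(S_1,\ldots,S_k)\rbrack$---and splitting off the top $j=k$ contribution via $e_k(A_1,\ldots,A_k)=\lbrack\lbrack S_1\cup\cdots\cup S_k\rbrack\rbrack$, I would rearrange to obtain
\[
\lbrack(S_1,\ldots,S_k)\rbrack \;=\; \lbrack\lbrack S_1\cup\cdots\cup S_k\rbrack\rbrack + \sum_{j=0}^{k-1}(-1)^{k-j}e_j(A_1,\ldots,A_k),
\]
and note the sign identity $(-1)^{k-j}=(-1)^{(k-2)-j}$ to match the statement.

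The main obstacle will be the combinatorial bookkeeping in the absorption step: when $T^c$ consists of several cyclic arcs, the telescoped subspace indicator along each arc has an interior part that is not absorbed by the flanking $A$-factors, so one must separately collapse each arc by applying Proposition \ref{prop:cyclic Minkowski sum identity closed plates} and verify that the net sign agrees with $(-1)^{k-|T|}$. I expect to handle this by induction on the number of arcs in $T^c$, with the single-arc case serving as the base.
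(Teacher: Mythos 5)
Your proposal runs on the same basic engine as the paper's proof---the cycle identity $\mu_{S_1,S_2}\cdots\mu_{S_k,S_1}=0$---but it goes off the rails at the crucial identification step, and the error is not cosmetic.

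You claim that the $T=\emptyset$ summand $\bigl(\bullet_{j=1}^k B_j\bigr)$, where $B_j=1_{S_j}1_{S_{j+1}}$, "matches (up to sign) $\lbrack(S_1,\ldots,S_k)\rbrack$." This is false. By definition, $1_{S_1}1_{S_2}\cdots 1_{S_k}$ is the $\{0,1\}$-valued characteristic function of the single subspace $\lbrack S_1\rbrack\oplus\cdots\oplus\lbrack S_k\rbrack$ (codimension $k-1$ in $V_0^n$). The graduated blade $\lbrack(S_1,\ldots,S_k)\rbrack$ is a completely different object: it is the sum of the characteristic functions of the $k$ cyclic block rotations $\lbrack\lbrack S_t,S_{t+1},\ldots,S_{t-1}\rbrack\rbrack$, hence it is supported on all of $V_0^n$ and takes values in $\{1,\ldots,k\}$. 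The step the paper's proof actually hinges on is the observation that the graduated blade appears at the \emph{opposite} end of the expansion: for each $t$, the Minkowski sum $\bullet_{i\ne t}\lbrack\lbrack S_i,S_{i+1}\rbrack\rbrack$ equals the cyclic-rotation plate $\lbrack\lbrack S_{t+1},\ldots,S_t\rbrack\rbrack$, so
$$\lbrack(S_1,\ldots,S_k)\rbrack \;=\; e_{k-1}\bigl(\lbrack\lbrack S_1,S_2\rbrack\rbrack,\ldots,\lbrack\lbrack S_k,S_1\rbrack\rbrack\bigr),$$
and the corollary follows by isolating $e_{k-1}$ from $0=\sum_{j=0}^k(-1)^{k-j}e_j$. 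Your proposal never makes this identification, so even if the absorption/telescoping bookkeeping in the mixed terms were carried through, you would not reach the target identity.

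A secondary issue: the reduction you sketch for mixed $T$-terms (organizing $T^c$ into cyclic arcs, telescoping $B$-factors, absorbing endpoints, folding the rest by Proposition \ref{prop:cyclic Minkowski sum identity closed plates}) is genuinely unresolved, and you acknowledge as much in your final paragraph. When $T^c$ contains a run of $\ge 2$ consecutive indices, the telescoped factor $1_{S_m}1_{S_{m+1}}\cdots 1_{S_{m'}}$ has interior factors $1_{S_{m+1}},\ldots,1_{S_{m'-1}}$ that are not absorbed by any flanking $\lbrack\lbrack S_i,S_{i+1}\rbrack\rbrack$, and Proposition \ref{prop:cyclic Minkowski sum identity closed plates} does not apply (its hypothesis is a full cyclic chain, not an arc). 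The paper sidesteps all of this by the explicit abuse of notation $\mathbf{1}$ for any product of the subspace indicators $\lbrack S_i\rbrack$, treating $\mu_{S_i,S_{i+1}}=\lbrack\lbrack S_i,S_{i+1}\rbrack\rbrack-\mathbf{1}$ formally and expanding $\prod(A_i-\mathbf{1})$ as if $\mathbf{1}$ were a scalar; if you want a proof that does not rely on that convention, you still owe the careful version of the simplification, but even then the conclusion must go through the $e_{k-1}$ identification above.
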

	
	\begin{proof}
		By Proposition \ref{prop: open plates properties}, we have the cycle identity $\mu_{12}\mu_{23}\cdots\mu_{k1}=0$, hence
		\begin{eqnarray*}
			0 & = & \mu_{S_1S_2}\mu_{S_2S_3}\cdots \mu_{S_kS_1}\\
			& = & (\lbrack\lbrack  S_1,S_2\rbrack\rbrack - 1)(\lbrack\lbrack  S_2,S_3\rbrack\rbrack - 1)\cdots (\lbrack\lbrack  S_k,S_1\rbrack\rbrack - 1),
		\end{eqnarray*}
		which expands to an alternating sum of the elementary symmetric functions in the variables 
		$$\lbrack\lbrack S_1,S_2\rbrack\rbrack, \ldots, \lbrack\lbrack S_k,S_1\rbrack\rbrack.$$
		Noting that the piecewise constant function $\lbrack(S_1,\ldots, S_k)\rbrack$ can be expressed as an elementary symmetric function of degree $k-1$, as 
		$$\lbrack (S_1,\ldots, S_k)\rbrack = e_{k-1}(\lbrack\lbrack S_1,S_2\rbrack\rbrack,\ldots, \lbrack\lbrack S_k,S_1\rbrack\rbrack),$$ we conclude the proof by solving for $e_{k-1}(\lbrack\lbrack S_1,S_2\rbrack\rbrack,\ldots, \lbrack\lbrack S_k,S_1\rbrack\rbrack)$. 
	\end{proof}

In particular, for any three disjoint nonempty subsets $S_1,S_2,S_3\subset\{1,\ldots, n\}$ we have the fundamental relation 
$$\lbrack\lbrack S_1,S_2,S_3\rbrack\rbrack + \lbrack\lbrack S_2,S_3,S_1\rbrack\rbrack + \lbrack\lbrack S_3,S_1,S_2\rbrack\rbrack = 1_{S_1\cup S_2\cup S_3} + \mu_{S_1,S_2}+ \mu_{S_2,S_3} + \mu_{S_3,S_1}-1_{S_1}1_{S_2}1_{S_3} .$$
\begin{example}\label{example: rational function representation blade}
	In the cases $n=3,4$ we have the functional representations of characteristic functions of plates, respectively
	\begin{eqnarray*}
		\lbrack\lbrack 1,2,3\rbrack\rbrack & \mapsto & \frac{1}{x_1 \left(x_1+x_2\right) \left(x_1+x_2+x_3\right)}\\
	\lbrack\lbrack 1,2,3,4\rbrack\rbrack	&\mapsto & \frac{1}{x_1 \left(x_1+x_2\right) \left(x_1+x_2+x_3\right) \left(x_1+x_2+x_3+x_4\right)}
	\end{eqnarray*}
	while for blades we obtain, after partial fraction identities,
	\begin{eqnarray*}
	\lbrack (1,2,3)\rbrack & = & \lbrack\lbrack 1,2,3\rbrack\rbrack + \lbrack\lbrack 2,3,1\rbrack\rbrack + \lbrack\lbrack 3,1,2\rbrack\rbrack\\
	& \mapsto  & \frac{1}{y_1 \left(y_1+y_2\right) \left(y_1+y_2+y_3\right)}+\frac{1}{y_2 \left(y_2+y_3\right) \left(y_1+y_2+y_3\right)}+\frac{1}{y_3 \left(y_1+y_3\right) \left(y_1+y_2+y_3\right)}\\
	& = & \frac{1}{y_1 \left(y_1+y_2\right) y_3} +\frac{1}{y_2 \left(y_2+y_3\right)y_1}+ \frac{1}{y_3 \left(y_3+y_1\right)y_2}-\frac{1}{y_1y_2y_3},
\end{eqnarray*}
and
	\begin{eqnarray*}
	&&\lbrack (1,2,3,4)\rbrack = \lbrack\lbrack 1,2,3,4\rbrack\rbrack + \lbrack\lbrack 2,3,4,1\rbrack\rbrack +\lbrack\lbrack 3,4,1,2\rbrack\rbrack +\lbrack\lbrack 4,1,2,3\rbrack\rbrack \\
	&\mapsto &\frac{1}{y_1 \left(y_1+y_2\right) \left(y_1+y_2+y_3\right) \left(y_1+y_2+y_3+y_4\right)}+\frac{1}{y_2 \left(y_2+y_3\right) \left(y_2+y_3+y_4\right) \left(y_1+y_2+y_3+y_4\right)}\\
	&+ & \frac{1}{y_3 \left(y_3+y_4\right) \left(y_1+y_3+y_4\right) \left(y_1+y_2+y_3+y_4\right)} + \frac{1}{y_4 \left(y_1+y_4\right) \left(y_1+y_2+y_4\right) \left(y_1+y_2+y_3+y_4\right)}\\
	&=&\frac{1}{y_1 \left(y_1+y_2\right) \left(y_1+y_2+y_3\right) y_4} + \frac{1}{y_2 y_3 \left(y_3+y_4\right) \left(y_1+y_3+y_4\right)} + \frac{1}{y_3 y_4 \left(y_1+y_4\right) \left(y_1+y_2+y_4\right)}\\
	&+&\frac{1}{y_3 y_4 \left(y_1+y_4\right) \left(y_1+y_2+y_4\right)} + \frac{1}{y_1 \left(y_1+y_2\right) y_3 \left(y_3+y_4\right)} + \frac{1}{y_2 \left(y_2+y_3\right) y_4 \left(y_1+y_4\right)}\\
	&-&\frac{1}{y_1 \left(y_1+y_2\right) y_3 y_4}-\frac{1}{y_1 y_2 \left(y_2+y_3\right) y_4} - \frac{1}{y_1 y_2 y_3 \left(y_3+y_4\right)}-\frac{1}{y_2 y_3 y_4 \left(y_1+y_4\right)}\\
	&+&\frac{1}{y_1 y_2 y_3 y_4}
\end{eqnarray*}
\end{example}

	\begin{cor}\label{cor: cyclic sum identity}
		Let $(S_1,\ldots, S_k)$ be an ordered set partition.  Then, modulo characteristic functions of cones of codimension $\ge 1$, we have the cyclic sum relation
		$$\lbrack\lbrack S_1,S_2,\ldots, S_k\rbrack\rbrack + \lbrack\lbrack S_2,S_3,\ldots, S_1\rbrack\rbrack + \cdots + \lbrack\lbrack S_k,S_1,\ldots, S_{k-1}\rbrack\rbrack \equiv\lbrack\lbrack S_1\cdots S_k\rbrack\rbrack,$$
		that is the union of the cyclic block rotations of the plate $\lbrack S_1,\ldots, S_k\rbrack$ is the whole ambient space $V_0^n$.
	\end{cor}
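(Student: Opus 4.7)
The plan is to verify the asserted congruence pointwise off a codimension-$1$ subset, since both sides are piecewise constant. First I would reduce to the case $S_1\cup\cdots\cup S_k=\{1,\ldots,n\}$ by restricting to the subspace spanned by $\{\bar{e}_a:a\in S_1\cup\cdots\cup S_k\}$; after this reduction $\lbrack\lbrack S_1\cdots S_k\rbrack\rbrack$ is simply the indicator of $V_0^n$. For a point $x\in V_0^n$ I introduce the cyclic partial sums $P_j = x_{S_1\cdots S_j}$ for $j=0,1,\ldots,k$, noting that $P_0 = P_k = 0$. Unwinding the defining inequalities of the cyclically rotated plate $\lbrack S_i, S_{i+1},\ldots,S_{i-1}\rbrack$ shows that they read $P_{i-1+r}\ge P_{i-1}$ for $r=1,\ldots,k-1$, with indices taken mod $k$ and the wrap-around absorbed by $P_k=0$; equivalently, the $i$th cyclic rotation contains $x$ if and only if $P_{i-1}$ attains the minimum of $\{P_0,\ldots,P_{k-1}\}$.

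The heart of the proof is then the following observation: whenever the $k$ values $P_0,\ldots,P_{k-1}$ are pairwise distinct, the minimum is attained at exactly one index, so exactly one of the $k$ cyclic rotations contains $x$ and the cyclic sum evaluates to $1 = \lbrack\lbrack S_1\cdots S_k\rbrack\rbrack(x)$. The exceptional locus where two partial sums coincide is the union of the reflection-type hyperplanes $\{x_{S_{j+1}\cdots S_{j'}}=0\}$ for $0\le j<j'\le k-1$, a finite union of codimension-$1$ cones in $V_0^n$; on this exceptional locus the sum may exceed $1$, but the discrepancy from $\lbrack\lbrack S_1\cdots S_k\rbrack\rbrack$ is supported there, hence on cones of codimension $\ge 1$.

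The only real obstacle is bookkeeping: one must correctly translate the plate inequalities through the cyclic relabeling and confirm that the cyclic wrap-around closes up using $P_k=0$. Once that is in place, the pointwise argument above delivers the claimed congruence modulo characteristic functions of cones of codimension $\ge 1$, and in particular the set-theoretic union of the cyclic block rotations of the plate $\lbrack S_1,\ldots,S_k\rbrack$ covers the ambient subspace $\lbrack S_1\cdots S_k\rbrack$, with overlaps confined to the reflection hyperplanes just described.
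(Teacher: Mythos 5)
Your argument is correct, but it takes a genuinely different route from the paper's. The paper proves this algebraically: it starts from the cycle relation $\mu_{S_1,S_2}\cdots\mu_{S_k,S_1}=0$ (Proposition \ref{prop: open plates properties}, itself obtained by dualizing to intersect open half-spaces with empty common interior), expands the product $(\lbrack\lbrack S_1,S_2\rbrack\rbrack-1)\cdots(\lbrack\lbrack S_k,S_1\rbrack\rbrack-1)=0$ in the convolution algebra, and identifies the degree-$(k-1)$ terms as exactly $\lbrack\lbrack S_1S_2\cdots S_k\rbrack\rbrack$ minus the sum of cyclic rotations, with the remainder $\mathcal{O}(k-2)$ consisting of characteristic functions of cones of codimension at least one. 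You instead argue pointwise: translating the plate inequalities into the partial sums $P_j = x_{S_1\cdots S_j}$ (with $P_0=P_k=0$ absorbing the wrap-around) shows that $x$ lies in the $i$th cyclic rotation precisely when $P_{i-1}=\min\{P_0,\ldots,P_{k-1}\}$, so a generic $x$ (with pairwise distinct $P_j$) is counted exactly once and the discrepancy lives on the hyperplanes $\{x_{S_{j+1}\cdots S_{j'}}=0\}$. Both are sound. Your argument is more elementary and self-contained, and it actually delivers the set-theoretic statement about covering $V_0^n$ directly (since the minimum of $\{P_0,\ldots,P_{k-1}\}$ is always attained), whereas the paper obtains that as an afterthought. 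The paper's approach, on the other hand, works entirely inside the convolution algebra used throughout, and the same computation simultaneously yields the stronger alternating-sum expansion of Corollary \ref{cor: alternating expansion graduated blade}. One small point worth spelling out in your version: the discrepancy is a finite integer combination of characteristic functions of faces in the common fan generated by the $k$ rotated plates, so its vanishing off a codimension-$\ge 1$ locus really does place it in the span of characteristic functions of cones of codimension $\ge 1$, which is the precise sense of "modulo" in the statement.
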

	
	\begin{proof}
		We have
		\begin{eqnarray*}
			0 & = & \mu_{S_1,S_2} \cdots \mu_{S_k,S_1}\\
			& = & (\lbrack\lbrack S_1,S_2\rbrack\rbrack-1)(\lbrack\lbrack S_2,S_3\rbrack\rbrack-1) \cdots (\lbrack\lbrack S_k,S_1\rbrack\rbrack-1)\\
			& = & \lbrack\lbrack S_1 S_2\cdots S_k\rbrack\rbrack -\left(\lbrack\lbrack S_1,S_2,\ldots, S_k\rbrack\rbrack + \lbrack\lbrack S_2,S_3,\ldots, S_1\rbrack\rbrack + \lbrack\lbrack S_k,S_1,\ldots, S_{k-1}\rbrack\rbrack\right) + \mathcal{O}(k-2),
		\end{eqnarray*}
		and it follows that 
		$$
		\lbrack\lbrack S_1 S_2\cdots S_k\rbrack\rbrack	\equiv \lbrack\lbrack S_1,S_2,\ldots, S_k\rbrack\rbrack + \lbrack\lbrack S_2,S_3,\ldots, S_1\rbrack\rbrack +\cdots + \lbrack\lbrack S_k,S_1,\ldots, S_{k-1}\rbrack\rbrack,$$
		where we have modded out by $\mathcal{O}(k-2) $, which is an alternating sum of only characteristic functions of cones of dimension $k-2\le n-2$, those with codimension at least 1 in $V_0^n$.
	\end{proof}

The identity of Proposition \ref{prop: triangulation identity higher codim} can be recognized as a deformation of the fundamental identity in the so-called subdivision algebra, see \cite{Grinberg, Meszaros}.
\begin{prop}\label{prop: triangulation identity higher codim}
	We have the triangulation identity for closed cones
	$$(\lbrack\lbrack S_1,S_2\rbrack\rbrack + \lbrack\lbrack S_2,S_3\rbrack\rbrack) \bullet \lbrack\lbrack S_1,S_3\rbrack\rbrack = \lbrack\lbrack S_1,S_2\rbrack\rbrack \bullet \lbrack\lbrack S_2,S_3\rbrack\rbrack + \lbrack\lbrack S_1,S_3\rbrack\rbrack,$$
	while for open cones we have
	$$(\mu_{S_1,S_2}+\mu_{S_2,S_3})\mu_{S_1S_3} =\mu_{S_1S_2}\mu_{S_2S_3} -\mu_{S_1S_3}.$$
\end{prop}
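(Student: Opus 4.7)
The plan is to observe that the closed-cone version and the open-cone version are equivalent, so it suffices to prove one. Substituting $\mu_{S_i,S_j} = \lbrack\lbrack S_i,S_j\rbrack\rbrack - 1_{S_i} 1_{S_j}$ into the open identity and expanding via the bilinearity of convolution, one can use the simplifications $\lbrack\lbrack S_i,S_j\rbrack\rbrack \bullet 1_{S_i} = \lbrack\lbrack S_i,S_j\rbrack\rbrack$ (a consequence of the inclusion $[S_i]\subseteq[S_i,S_j]$ of subspace inside half-subspace) together with the idempotency $1_S\bullet 1_S = 1_S$ of subspace Minkowski sum. After cancellation these simplifications transform the open identity into the closed identity.

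To prove the closed identity I would apply the duality valuation $\star$ of Theorem \ref{thm: duality operator}, which interchanges the convolution $\bullet$ with the pointwise product. This converts the identity into the pointwise statement
\[
(\lbrack\lbrack S_1,S_2\rbrack\rbrack^\star + \lbrack\lbrack S_2,S_3\rbrack\rbrack^\star)\cdot \lbrack\lbrack S_1,S_3\rbrack\rbrack^\star = \lbrack\lbrack S_1,S_2\rbrack\rbrack^\star \cdot \lbrack\lbrack S_2,S_3\rbrack\rbrack^\star + \lbrack\lbrack S_1,S_3\rbrack\rbrack^\star,
\]
an identity among characteristic functions of the dual cones $[S_i,S_j]^\star$. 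Using the description of these duals given in the proof of Proposition \ref{prop:cyclic Minkowski sum identity closed plates} in terms of the ordering inequalities $x_{(S_i)}\ge x_{(S_j)}$ on the common values, the verification reduces to a stratum-by-stratum check on $V_0^n$, partitioning by the relative orderings of the three common values $x_{(S_1)}, x_{(S_2)}, x_{(S_3)}$. On each open stratum the transitivity of $\ge$ eliminates inconsistent cases and the identity reduces to an elementary boolean check. Applying $\star$ a second time, using its involutive behavior on closed convex cones, then recovers the original identity.

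The main obstacle is the treatment of the boundary strata where some of the common values coincide: here the dual cones share lower-dimensional faces and the characteristic functions interact in codimension, so one must carefully track dimensions and the sign conventions of the Barvinok--Pommersheim valuation. A complementary tactic, if the duality bookkeeping becomes awkward, is to prove the identity directly in the convolution algebra by expanding each $\mu_{S_i,S_j}$ in terms of $\lbrack\lbrack S_i,S_j\rbrack\rbrack$ and $1_{S_i}1_{S_j}$ and reducing with the relations noted above together with the cycle identity $\mu_{S_1,S_2}\mu_{S_2,S_3}\mu_{S_3,S_1}=0$ from Proposition \ref{prop: open plates properties}, which supplies the single nontrivial relation needed to collapse the expanded difference $(\mu_{12}+\mu_{23})\mu_{13} - \mu_{12}\mu_{23} + \mu_{13}$ to zero.
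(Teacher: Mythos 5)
Your main route — prove the closed and open identities are equivalent, then establish the closed identity by dualizing and checking stratum-by-stratum on the chambers of the $A_2$ arrangement — is correct, and in spirit it is the dual form of what the paper does. The paper works in the primal picture: it reduces to $V_0^3$ and observes that the ray $\lbrack 1,3\rbrack$ subdivides the two-dimensional cone $\lbrack 1,2\rbrack\oplus\lbrack 2,3\rbrack$ into $\lbrack 1,3\rbrack\oplus\lbrack 2,3\rbrack$ and $\lbrack 1,2\rbrack\oplus\lbrack 1,3\rbrack$, so the closed identity is literally inclusion--exclusion for two cones meeting along a ray. Your approach sends this through $\star$ and checks the pointwise Boolean identity among the dual half-space indicators; the ``boundary strata'' concern you raise is in fact not an obstacle — the pointwise identity holds on closed as well as open chambers, and $\star$ is applied only to the closed plates $\lbrack\lbrack S_i,S_j\rbrack\rbrack$ and subspaces $1_S$ on which it is an involution, exactly as in the paper's proof of Proposition~\ref{prop: open plates properties}. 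Both proofs carry the same combinatorial content; the primal version avoids invoking $\mathcal{D}$.

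The fall-back tactic has a genuine gap. The cycle identity $\mu_{S_1,S_2}\mu_{S_2,S_3}\mu_{S_3,S_1}=0$ together with idempotency $\mu_{ij}^2=-\mu_{ij}$, $\mu_{ij}\mu_{ji}=0$, and the absorption rules you list does \emph{not} imply the triangulation identity. The cycle identity involves $\mu_{S_3,S_1}$, whereas the expression you want to annihilate involves $\mu_{S_1,S_3}$ (the opposite orientation), and nothing in Proposition~\ref{prop: open plates properties} or your listed simplifications relates these two. Concretely, in a commutative ring take $\mu_{12}=\mu_{23}=-1$ and $\mu_{13}=\mu_{31}=\mu_{21}=\mu_{32}=0$; all the relations of Proposition~\ref{prop: open plates properties} hold ($\mu_{ij}^2=-\mu_{ij}$, $\mu_{ij}\mu_{ji}=0$, $\mu_{12}\mu_{23}\mu_{31}=0$, $\mu_{13}\mu_{32}\mu_{21}=0$), yet $(\mu_{12}+\mu_{23})\mu_{13}=0$ while $\mu_{12}\mu_{23}-\mu_{13}=1$. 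To make a purely algebraic derivation go through you additionally need the orientation-reversal relation $\lbrack\lbrack S_1,S_3\rbrack\rbrack+\lbrack\lbrack S_3,S_1\rbrack\rbrack=1_{S_1S_3}+1_{S_1}1_{S_3}$ (or, equivalently, the absorption law $\mu_{S_1,S_3}\mu_{S_1,S_2}\mu_{S_2,S_3}=-\mu_{S_1,S_2}\mu_{S_2,S_3}$ together with the ``mixed'' cycle vanishing $\mu_{S_1,S_3}\lbrack\lbrack S_1,S_2\rbrack\rbrack\lbrack\lbrack S_2,S_3\rbrack\rbrack=0$); these are not consequences of the relations you invoke. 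Since your primary duality argument works, this does not invalidate the proposal, but the parenthetical claim about the cycle identity should be withdrawn or supplemented.
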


\begin{proof}
	It suffices to verify the identity in the plane $V_0^3$, where the two cones $\langle e_1-e_3,e_2-e_3\rangle_+$ and $\langle e_1-e_2,e_1-e_3\rangle_+$ intersect on the common line $\langle e_1-e_3\rangle_+$.  Therefore by inclusion/exclusion we have for their characteristic functions the identity 
	$$\lbrack \langle e_1-e_3,e_2-e_3\rangle_+ \rbrack + \lbrack \langle e_1-e_2,e_1-e_3\rangle_+ \rbrack = \lbrack \langle e_1-e_2,e_2-e_3\rangle_+\rbrack + \lbrack \langle e_1-e_3\rangle_+\rbrack,$$
	or in the bracket notation,
	$$\lbrack\lbrack 1,3\rbrack\rbrack \bullet \lbrack\lbrack 2,3\rbrack\rbrack + \lbrack\lbrack 1,2\rbrack\rbrack \bullet \lbrack\lbrack 1,3\rbrack\rbrack = \lbrack\lbrack 1,2\rbrack\rbrack \bullet \lbrack\lbrack 2,3\rbrack\rbrack + \lbrack\lbrack 1,3\rbrack\rbrack.$$
	For characteristic functions of cones generated by open half lines, $\mu_{i,j} = \lbrack\lbrack i,j\rbrack\rbrack - 1$, where $1$ is the characteristic function of the point at the origin, the identity can similarly be seen to be
	$$(\mu_{ij}+\mu_{jk})\mu_{ik} =\mu_{ij}\mu_{jk} -\mu_{ik}.$$
\end{proof}

\section{Blades from symmetric functions on edges of a type $A$ affine Dynkin graph}
Recall that for any nonempty subsets $S_1,S_2\subset\{1,\ldots, n\}$ with $S_1\cap S_2 = \emptyset$, we denote by 
$$\mu_{S_1,S_2} = \lbrack\lbrack S_1,S_2\rbrack\rbrack - \lbrack\lbrack S_1\rbrack \cap \lbrack S_2\rbrack \rbrack$$
the characteristic function of the set 
$$\{x\in V_0^n: x_{S_1}>0,\ x_{S_1\cup S_2}=0,\ x_i=0\text{ for } i\not\in S_1\cup S_2 \},$$
where the equality is strict, and we define the convolution product 
$$\mu_{S_1,\ldots, S_k} = \mu_{S_1,S_2}\cdots \mu_{S_{k-1},S_k}.$$

If $(S_1,\ldots, S_k)$ is an ordered set partition, the blade $((S_1,S_2,\ldots,S_k))$ is the \textit{complement} in $V_0^n$ of the union of the interiors of the plates
$$\lbrack S_1,\ldots, S_k\rbrack, \lbrack S_2,\ldots, S_{k}, S_1\rbrack,\ldots, \lbrack S_k,S_1\ldots, S_{k-1}\rbrack.$$
This union of interiors has characteristic function
$$\mu_{S_1,S_2}\mu_{S_2,S_3}\cdots \mu_{S_{k-1},S_k}+\mu_{S_2,S_3}\mu_{S_3,S_4}\cdots \mu_{S_{k},S_1} + \cdots \mu_{S_k,S_1}\mu_{S_1,S_2}\cdots \mu_{S_{k-2},S_{k-1}}.$$

Definition \ref{defn:blade} is due to A. Ocneanu.
\begin{defn}[\cite{OcneanuVideo}]\label{defn:blade}
	For an ordered set partition $(S_1,\ldots, S_k)$ of $\{1,\ldots, n\}$ with $k\ge 3$, the \textit{blade} $((S_1,\ldots, S_k))$ is the set theoretic \textit{union} of the Minkowski sums of cones,
	$$((S_1,\ldots, S_k))=\bigcup_{1\le i<j\le k} \lbrack S_1,S_2\rbrack\oplus \cdots\oplus \widehat{\lbrack S_i,S_{i+1}\rbrack} \oplus \cdots \oplus\widehat{\lbrack S_j,S_{j+1}\rbrack} \oplus \cdots\oplus \lbrack S_k,S_1\rbrack,$$
	where the hat means that that corresponding term has been omitted, and where we adopt the convention $S_{k+1}=S_1$.  When $k=2$ and $S_1\sqcup S_2 = \{1,\ldots, n\}$, then put 
	$$((S_1,S_2)) = \lbrack S_1\rbrack \oplus \lbrack S_2 \rbrack = \left\{\sum_{i\in S_1\cup S_2}t_i e_i: t_{S_1} = 0,\ t_{S_2}=0 \right\}.$$
	Finally, when $k=1$ we set $$((12\cdots n)) =V_0^n.$$
\end{defn}
	When the blocks in the ordered set partition are singlets, we have the following two interesting interpretations.
	
	\begin{prop}
		The (nondegenerate) blade $((\sigma_1,\sigma_2,\ldots, \sigma_n))$ is the union of the codimension 1 facets to the normal fan to the simplex with vertices
		$$e_{\sigma_1} - e_{\sigma_{n}},e_{\sigma_2}-e_{\sigma_1},\ldots, e_{\sigma_n}-e_{\sigma_{n-1}}.$$
		Moreover, as illustrated in Figure \ref{fig:bladesBijectionPathDashed}, the (one-skeleton of the) blade $((\sigma_1,\sigma_2,\ldots, \sigma_n))$ can be interpreted as a Hamiltonian cycle on the simplex with vertices $e_1,\ldots, e_n$.
	\end{prop}
\begin{figure}[h!]
	\centering
	\includegraphics[width=1\linewidth]{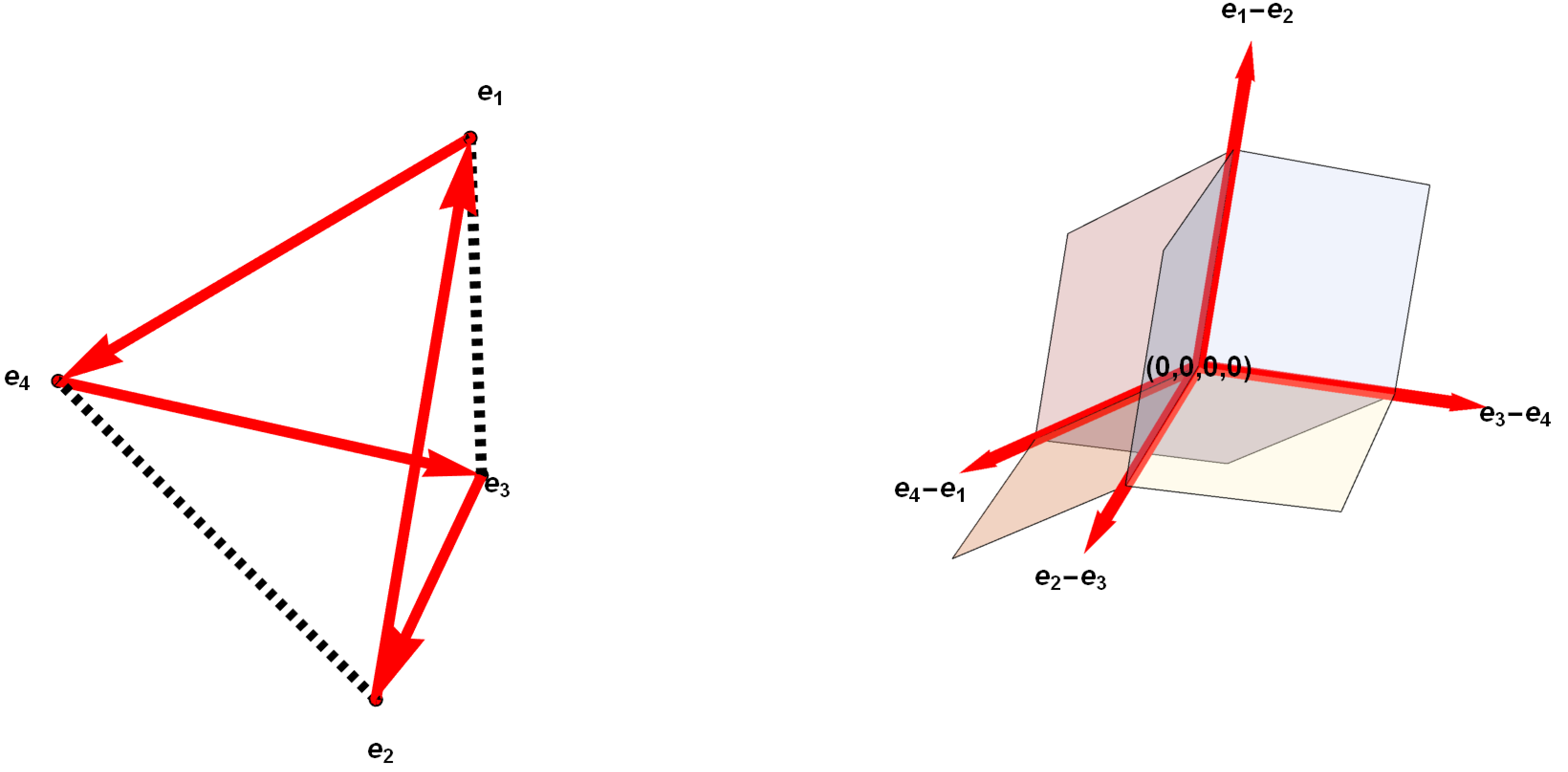}
	\caption{Alternate representation of the $n=4$ blade $((1,2,3,4))$. Left: as a Hamiltonian cycle on the tetrahedron, and Right: as the union of the conical hulls of pairs of roots extending from the origin, selected from $e_1-e_2,e_2-e_3,e_3-e_4,e_4-e_1$.  This is easily seen to extend to a bijection between Hamiltonian paths on a simplex and nondegenerate blades $((i_1,\ldots, e_n))$.
	}
	\label{fig:bladesBijectionPathDashed}
\end{figure}
	Denote by $\Gamma_{S_1,\ldots, S_k}$ the characteristic function of the blade $((S_1,\ldots, S_k))$.
	
	For compactness we further denote 
	$$\Gamma_{S} = 1_S \text{ and }  \Gamma_{S_1,S_2} = \lbrack(S_1,S_2)\rbrack$$

We distinguish the case of blades which are labeled by 3-block ordered set partitions.

\begin{defn}
	For a triple of disjoint subsets $(S_1,S_2,S_3)$ of $\{1,\ldots, n\}$, the (characteristic) function
	$$\gamma_{S_1,S_2,S_3}=1_{S_1}1_{S_2}1_{S_3} + \mu_{S_1,S_2} + \mu_{S_2,S_3} + \mu_{S_3,S_1}$$
	is called a \textit{tripod}.  
\end{defn}

\begin{figure}[h!]
	\centering
	\includegraphics[width=0.4\linewidth]{blade3Coordinates}
	\caption{Blade in three coordinates, characteristic function $\gamma_{1,2,3} = 1+\mu_{12} + \mu_{23} + \mu_{31}$.  Arrows indicate that the rays extend to infinity.  The $\mu_{ij}$'s are characteristic functions of open rays extending from $(0,0,0)$, while ``1'' is the characteristic function of the point $(0,0,0)$ itself.}
	\label{fig:blade3coordinates0}
\end{figure}

Informally, a blade is the union of the facets of the complete fan built from the $k$ cyclic rotations of the plate $\lbrack S_1,\ldots, S_k\rbrack.$  Note that when the blocks $S_i$ are not singlets, the cones in the blade are not in general pointed.

We show now that lumped blades, labeled by standard ordered set partitions with at least one block of size $\ge2$, reduce naturally.
\begin{prop}
	The characteristic function $\Gamma_{S_1,\ldots, S_k}$ of any blade $((S_1,\ldots, S_k))$ with $k\ge 3$ is a convolution product of characteristic functions of (one-dimensional) tripods $\gamma_{i,j,k}$ and sticks $1_{ab}$ labeled by respectively triples $(i,j,k)$ of integers with $i<j<k$ and $a<b$, where $i,j,k,a,b\in\{1,\ldots, n\}$.
\end{prop}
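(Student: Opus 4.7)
The plan is to combine the flag factorization of blades with an inductive block-reduction for lumped three-block tripods.

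First, by Theorem \ref{thm: blade flag factorization}, one reduces the general case to the three-block case via
$$\Gamma_{S_1,S_2,\ldots,S_k}\;=\;\gamma_{S_1,S_2,S_3}\,\bullet\,\gamma_{S_1,S_3,S_4}\,\bullet\cdots\bullet\,\gamma_{S_1,S_{k-1},S_k}.$$
It then suffices to prove that every lumped tripod $\gamma_{A,B,C}$ is a convolution product of one-dimensional tripods $\gamma_{i,j,k}$ and sticks $1_{ab}$. I would argue by induction on $N=|A|+|B|+|C|$. The base case $N=3$ is trivial, since each block is then a singleton and $\gamma_{A,B,C}$ is already an elementary tripod (after possibly applying the cyclic relation $\gamma_{i,j,k}=\gamma_{k,i,j}$).

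For the inductive step, assume without loss of generality $|A|\ge 2$, pick distinct $a_0,a_1\in A$, and set $A'=A\setminus\{a_1\}$. The central technical step is a reduction identity expressing $\gamma_{A,B,C}$ as a convolution of $\gamma_{A',B,C}$, to which induction applies, with a stick $1_{a_0 a_1}$ and, where necessary, one auxiliary elementary tripod involving $a_1$ and representatives of $B$ and $C$. Iterating this step first on $|A|$, then on $|B|$ and $|C|$, reduces $\gamma_{A,B,C}$ to a convolution product of a single elementary tripod with sticks, as required.

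To verify the reduction identity, I would expand both sides via
$$\gamma_{A,B,C}\;=\;1_{A}1_{B}1_{C}\,+\,\mu_{A,B}\,+\,\mu_{B,C}\,+\,\mu_{C,A},$$
and compare term by term. The subspace decomposition $1_A = 1_{A'}\bullet 1_{a_0 a_1}$ handles the first summand immediately; analogous decompositions refine $\mu_{A,B}$ and $\mu_{C,A}$ using Propositions \ref{prop:cyclic Minkowski sum identity closed plates} and \ref{prop: triangulation identity higher codim}. The cycle identities $\mu_{S_1,S_2}\cdots\mu_{S_k,S_1}=0$ of Proposition \ref{prop: open plates properties} then supply the relations needed to cancel spurious contributions produced by convolving with sticks.

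The main obstacle is the $\mu_{B,C}$ summand, which does not involve any element of $A$: a naive convolution with the stick $1_{a_0 a_1}$ thickens its support by the full direction $e_{a_0}-e_{a_1}$, overshooting the correct lumped value. Compensating for this requires either an auxiliary elementary tripod whose legs supply the precise cancellations prescribed by the cycle identities, or applying the duality valuation $\star$ of Theorem \ref{thm: duality operator} to transfer the computation to the pointwise-product side, where the block structure of $\mu_{B,C}$ is transparent and the spurious extra direction is visibly cut out by an intersection of half-subspaces. Matching this cancellation across all four summands of $\gamma_{A,B,C}$ is the technical heart of the proof; once the reduction identity is checked in one instance, iteration completes the induction.
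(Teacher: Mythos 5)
Your overall strategy matches the paper's: reduce to the three-block case, then absorb the block structure using sticks. The paper does this directly by choosing representatives $i_j\in S_j$ and writing
$$\Gamma_{S_1,\ldots, S_k} = 1_{S_1}1_{S_2}\cdots 1_{S_k}\,\gamma_{i_1,i_2,i_3}\gamma_{i_1,i_3,i_4}\cdots \gamma_{i_1,i_{k-1},i_k},$$
justified by the stick decomposition $1_S = 1_{i_1 i_2}\cdots 1_{i_{s-1}i_s}$ and the lumping identity $1_T\,\gamma_{i,j,k} = \gamma_{T,j,k}$ for $i\in T$. You reach the same endpoint but package it as an induction on $|A|+|B|+|C|$ within each three-block factor, invoking Theorem \ref{thm: blade flag factorization} up front (which the paper's proof uses implicitly, since the $\gamma_{i_1,i_j,i_{j+1}}$ pattern is exactly the flag factorization of the singleton representatives). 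So the conceptual content is the same; the presentation differs.

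However, your proposal has a gap precisely at the point where the paper is terse: the ``reduction identity'' at the heart of your inductive step is never verified, and you acknowledge this yourself. Your concern about the $\mu_{B,C}$ summand is correct and is the real issue: convolving $1_{a_0 a_1}$ against $\mu_{B,C}$ thickens the support by the direction $e_{a_0}-e_{a_1}$, and this is exactly what the paper's lumping identity must contend with. Neither of your two suggested remedies (introducing an auxiliary elementary tripod, or dualizing via $\star$) is carried through to an actual identity; they remain sketches of where a proof might go, not a proof. Moreover, the auxiliary-tripod device you float is \emph{not} something the paper uses at all: the paper's formula contains no extra tripod factors beyond the flag pattern, only sticks. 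So the auxiliary-tripod route, even if it worked, would give a different decomposition than the one the proposition and the paper's proof supply. To close your argument you would need to actually prove the reduction identity — most cleanly by dualizing and computing with pointwise products of characteristic functions of half-spaces, as is done in the proof of Theorem \ref{thm: blade flag factorization}, or by appealing to the expansion $\Gamma_{S_1,\ldots,S_k}=1_{S_1\cup\cdots\cup S_k} - \sum_j \mu_{S_j,S_{j+1},\ldots,S_{j-1}}$ of Corollary \ref{cor: blade as difference}. As it stands, you have correctly identified the key lemma but stopped short of establishing it.
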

\begin{proof}
	Let $(S_1,\ldots, S_k)$ be any ordered set partition of $\{1,\ldots, n\}$. Choose elements $i_j\in S_j$ for $j=1,\ldots, k$.  Then, applying the identity $1_T\gamma_{i,j,k} = \gamma_{T,j,k}$ whenever $i\in T$ but $j,k\not\in T$, we have
	$$\Gamma_{S_1,\ldots, S_k} = 1_{S_1}1_{S_2}\cdots 1_{S_k}\gamma_{i_1,i_2,i_3}\gamma_{i_1,i_3,i_4}\cdots \gamma_{i_1,i_{k-1},i_k}$$
\end{proof}

\begin{prop}\label{prop: blade as difference}
	The characteristic function of the blade $((S_1,\ldots, S_k))$ has the expansion in terms of convolution products, as 
\begin{eqnarray*}
	\Gamma_{S_1,\ldots, S_k} & = & 1_{S_1\cup\cdots \cup S_k} - \left(\mu_{S_1,S_2,\ldots, S_k} + \mu_{S_2,S_3,\ldots, S_1} + \cdots + \mu_{S_k,S_1,\ldots, S_{k-1}}\right)\\
	& = & 1_{S_1\cup\cdots \cup S_k} -\left((\mu_{S_1,S_2}\cdots \mu_{S_{k-1},S_k})+(\mu_{S_2,S_3}\cdots \mu_{S_{k},S_1})+\cdots + (\mu_{S_k,S_1}\cdots \mu_{S_{k-2},S_{k-1}})\right)
\end{eqnarray*}
\end{prop}

\begin{figure}[h!]
	\centering
	\includegraphics[width=0.5\linewidth]{BladesStickFramed3A}
	\caption{The $n=4$ blade $((1,2,3,4))$ from the convolution of two 3-coordinate blades $\gamma_{1,2,3}, \gamma_{1,3,4}$. This convolution product has characteristic function
		$$\Gamma_{(1,2,3,4)} = \gamma_{1,2,3}\gamma_{1,3,4} = (1+\mu_{12}+\mu_{23}+\mu_{31})(1+\mu_{13}+\mu_{34}+\mu_{41}).$$
		The dotted rays cancel in the convolution product and the red dot and rays remain, together with the $6=\binom{4}{2}$ sheets, and we obtain the characteristic function
		$$\Gamma_{(1,2,3,4)}=1+(\mu_{12}+\mu_{23} + \mu_{3 4} + \mu_{41}) + (\mu_{12}\mu_{23}+\mu_{12}\mu_{34}+\mu_{12}\mu_{41} + \mu_{23}\mu_{34} + \mu_{23}\mu_{41} + \mu_{34} \mu_{41})$$
		$$=\sum_{j=0}^2e_j(\mu_{12},\mu_{23},\mu_{34},\mu_{41}),$$
		where $e_j$ is the $j^\text{th}$ elementary symmetric function.
	}
	\label{fig:bladesstickframed3a}
\end{figure}

\begin{prop}
	We have
		$$\Gamma_{S_1,\ldots, S_k} =1_{S_1}1_{S_2}\cdots 1_{S_k}+\sum_{j=1}^{k-2} \mathbf{e}_j(\mu_{S_1,S_2},\ldots, \mu_{S_k,S_1}), $$
	where $e_j$ is the $j$th elementary symmetric function.
\end{prop}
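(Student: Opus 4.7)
The plan is to combine two results established earlier in the excerpt: the Corollary \emph{blade as difference}, which gives
\[
\Gamma_{S_1,\ldots, S_k} = 1_{S_1\cup\cdots\cup S_k} - \bigl(\mu_{S_1,S_2}\cdots\mu_{S_{k-1},S_k} + \text{cyclic shifts}\bigr),
\]
and Proposition \emph{cyclic Minkowski sum identity closed plates}, which gives $\prod_{i=1}^k\lbrack\lbrack S_i,S_{i+1}\rbrack\rbrack = 1_{S_1\cup\cdots\cup S_k}$. First, I would observe that the cyclic sum appearing in the Corollary is precisely the elementary symmetric function $e_{k-1}(\mu_{S_1,S_2},\ldots,\mu_{S_k,S_1})$: each summand is a convolution product of $k-1$ consecutive $\mu_{S_i,S_{i+1}}$'s, and over the $k$ cyclic shifts each $\mu$ is omitted exactly once, so the cyclic sum coincides with the sum of all $(k-1)$-fold products. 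Hence $\Gamma_{S_1,\ldots,S_k} = 1_{S_1\cup\cdots\cup S_k} - e_{k-1}(\mu)$.

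Next, I would expand the cyclic Minkowski identity by writing $\lbrack\lbrack S_i,S_{i+1}\rbrack\rbrack = 1_{S_i}1_{S_{i+1}} + \mu_{S_i,S_{i+1}}$ and distributing. This produces a sum over subsets $I\subseteq\{1,\ldots,k\}$ of mixed convolution products of the form $\prod_{i\in I}\mu_{S_i,S_{i+1}} \bullet \prod_{j\notin I}1_{S_j}1_{S_{j+1}}$. The empty-$I$ term collapses via idempotency of subspace characteristic functions under convolution ($1_S\bullet 1_S = 1_S$, combined with the fact that each $1_{S_a}$ appears twice in the cyclic product of pairs) to $1_{S_1}1_{S_2}\cdots 1_{S_k}$, while the full-$I$ term vanishes by the cycle identity of Proposition \emph{open plates properties}. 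Gathering the remaining mixed terms by $|I|=j$ should yield exactly $e_j(\mu_{S_1,S_2},\ldots,\mu_{S_k,S_1})$ for each $1\le j\le k-1$, so that
\[
1_{S_1\cup\cdots\cup S_k} \;=\; 1_{S_1}1_{S_2}\cdots 1_{S_k} \;+\; \sum_{j=1}^{k-1} e_j(\mu_{S_1,S_2},\ldots,\mu_{S_k,S_1}).
\]
Subtracting $e_{k-1}(\mu)$ from both sides and invoking the Corollary yields precisely the statement.

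The main obstacle lies in justifying the collapse of the mixed terms to pure elementary symmetric functions in the $\mu$'s: a priori, a product $\prod_{i\in I}\mu_i\bullet\prod_{j\notin I}1_{S_j}1_{S_{j+1}}$ is a convolution of open half-spaces with subspace factors, not obviously equal to $\prod_{i\in I}\mu_i$ alone. The key tool is the family of absorption identities of the form $1_{S_a}\bullet\mu_{S_b,S_c} = \mu_{S_a\cup S_b,S_c}$ when $a$ lumps into one of the blocks (analogous to the lumping used in the proof of the earlier Proposition expressing $\Gamma$ as a convolution of tripods and sticks), together with the repeated-subspace idempotency. Applying these identities iteratively to each $1_{S_j}1_{S_{j+1}}$ factor with $j\notin I$, one shows each such factor acts as the identity on the support of $\prod_{i\in I}\mu_i$ within the ambient convolution algebra, so that the $j$-th grouping assembles to $e_j(\mu)$. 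Once this structural lemma is in hand, the proof is completed by combining with the Corollary as above.
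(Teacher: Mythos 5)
Your overall route matches the paper's own proof: expand the closed-plate cyclic identity $\lbrack\lbrack S_1\cup\cdots\cup S_k\rbrack\rbrack = \prod_i\lbrack\lbrack S_i,S_{i+1}\rbrack\rbrack$ by writing each factor as $1_{S_i}1_{S_{i+1}}+\mu_{S_i,S_{i+1}}$, kill the top degree via the cycle identity $\mu_{S_1,S_2}\cdots\mu_{S_k,S_1}=0$, and subtract $e_{k-1}(\mu)$ using the blade-as-difference corollary (your observation that the cyclic sum of $(k-1)$-fold $\mu$-products is exactly $e_{k-1}$ is correct and is also what the paper uses). The step you flag as the ``main obstacle'' is the very point the paper handles only by the ``abuse of notation'' $\mathbf{1}$ for arbitrary products of the $1_{S_i}$, so you have located the genuine crux of the argument.

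The absorption lemma you propose does not, however, close the gap. What ``lumping'' actually buys is $1_{S_a}\bullet\mu_{S_b,S_c}=\mu_{S_b,S_c}$ \emph{only when} $S_a\subseteq S_b\cup S_c$; if $S_a$ is disjoint from $S_b\cup S_c$ and $|S_a|\geq 2$, then $\lbrack S_a\rbrack$ is transverse to $\lbrack S_b\cup S_c\rbrack$ and the Minkowski sum genuinely enlarges the support. In the term indexed by a proper nonempty $I\subsetneq\{1,\ldots,k\}$, any block $S_j$ with both $j\notin I$ and $j-1\notin I$ (indices cyclic) contributes an unabsorbable factor $1_{S_j}$, since $S_j$ meets no $S_i\cup S_{i+1}$ with $i\in I$. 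Already for $k=3$ with all $|S_i|\geq 2$, the $I=\{2\}$ term simplifies to $\mu_{S_2,S_3}\bullet 1_{S_1}$, whose support is $\{x\in V_0^n: x_{S_1}=0,\ x_{S_2}>0\}$, strictly larger than the support of $\mu_{S_2,S_3}$, which additionally requires $x_j=0$ for every $j\in S_1$. So your iterative argument that ``each such factor acts as the identity'' fails precisely at the factors you need it for. For partitions into singletons each $1_{S_j}$ is the Dirac mass at the origin and the obstruction evaporates, which is why the formula and both proofs work in that case; for lumped blocks something more is needed — one must track how these fattened terms redistribute across the $I$-sum — and neither your sketch nor the paper's one-line appeal to $\mathbf{1}$ supplies it.
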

\begin{proof}
	In the following computation, we shall abuse notation and write $\mathbf{1}$ for the convolution product of characteristic functions of any subcollection of the subspaces $\lbrack S_i\rbrack$ for $i=1,\ldots, k$.
	By Proposition \ref{prop: open plates properties} we have $\mu_{S_1,S_2}\cdots \mu_{S_k,S_1}=0$; but $\mathbf{1}+\mu_{S_1,S_2} = \lbrack\lbrack S_1,S_2\rbrack\rbrack$, hence
	\begin{eqnarray*}
		\lbrack\lbrack 12\cdots n\rbrack\rbrack & = & \lbrack\lbrack S_1,S_2\rbrack\rbrack \bullet \lbrack\lbrack S_2,S_3\rbrack\rbrack \bullet \cdots \bullet \lbrack\lbrack S_k,S_1\rbrack\rbrack\\
		& = & ( \mathbf{1}+\mu_{S_1,S_2})( \mathbf{1}+\mu_{S_2,S_3})\cdots ( \mathbf{1}+\mu_{S_k,S_1})\\
		& = &  \mathbf{1}+\sum_{j=1}^{k-1} \mathbf{e}_j(\mu_{S_1,S_2},\ldots, \mu_{S_k,S_1})\\
		& = & \mathbf{1}+\sum_{j=1}^{k-2} \mathbf{e}_j(\mu_{S_1,S_2},\ldots, \mu_{S_k,S_1}) + \mathbf{e}_{k-1}(\mu_{S_1,S_2},\ldots, \mu_{S_k,S_1}),
	\end{eqnarray*}                                                                                                    the 
	where $\mathbf{e}_j$ is the $j$th elementary symmetric function, and $\mathbf{e}_{k-1}(\mu_{S_1,S_2},\ldots, \mu_{S_k,S_1})$ is the characteristic function of the complement of the blade $((S_1,\ldots, S_k))$.  The first equality follows since $\lbrack\lbrack S_1,S_2\rbrack\rbrack\bullet\cdots \bullet \lbrack\lbrack S_k,S_1\rbrack\rbrack = \lbrack\lbrack S_1\cup\cdots\cup S_k\rbrack\rbrack$.
	
	Consequently we have
	$$\Gamma_{S_1,\ldots, S_k} = \mathbf{1}+\sum_{j=1}^{k-2} \mathbf{e}_j(\mu_{S_1,S_2},\ldots, \mu_{S_k,S_1}).$$
\end{proof}

\begin{thm}\label{thm: blade flag factorization}
	The characteristic function of the blade $\Gamma_{S_1,\ldots, S_k}$
	labeled by a standard ordered set partition $(S_1,\ldots,S_k)$ of $\{1,\ldots, n\}$ admits the ``flag'' factorization
	$$\Gamma_{S_1,\ldots, S_k} = \gamma_{S_1,S_2,S_3}\gamma_{S_1,S_3,S_4}\cdots \gamma_{S_1,S_{k-1},S_k}.$$
\end{thm}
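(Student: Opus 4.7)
The plan is to induct on the number of blocks $k \ge 3$. The base case $k = 3$ is immediate: the elementary symmetric expansion of the preceding proposition specializes to
\[
\Gamma_{S_1,S_2,S_3} \;=\; 1_{S_1}1_{S_2}1_{S_3} \;+\; \mu_{S_1,S_2} \;+\; \mu_{S_2,S_3} \;+\; \mu_{S_3,S_1},
\]
which coincides verbatim with the definition of the tripod $\gamma_{S_1,S_2,S_3}$.

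For the inductive step $k \ge 4$, it would suffice to establish the one-step recursion
\[
\Gamma_{S_1,\ldots,S_k} \;=\; \Gamma_{S_1,\ldots,S_{k-1}} \bullet \gamma_{S_1,S_{k-1},S_k},
\]
since applying the inductive hypothesis to the first factor will then produce the full flag factorization. I would prove this recursion by expanding both sides using the elementary symmetric expansion of the preceding proposition for each $\Gamma$ and the definition $\gamma_{S_1,S_{k-1},S_k} = 1_{S_1}\bullet 1_{S_{k-1}}\bullet 1_{S_k} + \mu_{S_1,S_{k-1}} + \mu_{S_{k-1},S_k} + \mu_{S_k,S_1}$, then simplifying. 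The key discrepancy between the two sides lies in two ``missing'' edges: the tripod introduces the diagonal $\mu_{S_1,S_{k-1}}$ of the $k$-cycle, while $\Gamma_{S_1,\ldots,S_{k-1}}$ contains the $(k-1)$-cycle closure edge $\mu_{S_{k-1},S_1}$; neither of these appears in the cyclic $k$-expansion of $\Gamma_{S_1,\ldots,S_k}$ in $\mu_{S_1,S_2}, \mu_{S_2,S_3}, \ldots, \mu_{S_{k-1},S_k}, \mu_{S_k,S_1}$. These would be eliminated using three ingredients from the earlier sections: (i) the triangulation identity of Proposition \ref{prop: triangulation identity higher codim}, which exchanges any product containing the diagonal $\mu_{S_1,S_{k-1}}$ for products containing the new cyclic edges $\mu_{S_{k-1},S_k}$ and $\mu_{S_k,S_1}$; (ii) the involution relations $\mu_{i,j}\bullet\mu_{i,j} = -\mu_{i,j}$ and $\mu_{i,j}\bullet\mu_{j,i} = 0$ from Proposition \ref{prop: open plates properties}, which annihilate duplicated or oppositely oriented factors (in particular the pairing $\mu_{S_1,S_{k-1}}\bullet\mu_{S_{k-1},S_1}$); and (iii) the cycle identity $\mu_{S_1,S_2}\bullet\mu_{S_2,S_3}\bullet\cdots\bullet\mu_{S_{k-1},S_1} = 0$ from the same proposition, which kills the would-be top-degree term from the $(k-1)$-cycle.

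The main obstacle is the combinatorial bookkeeping: the expansion produces many cross-terms indexed by pairs (subset of $\mu$-factors chosen from $\Gamma_{S_1,\ldots,S_{k-1}}$, summand chosen from $\gamma_{S_1,S_{k-1},S_k}$), and one must verify that after applying the three identities above these cross-terms reassemble exactly into the cyclic elementary symmetric functions $\mathbf{e}_j(\mu_{S_1,S_2},\ldots,\mu_{S_k,S_1})$ for $j=1,\ldots,k-2$ predicted for $\Gamma_{S_1,\ldots,S_k}$ by the preceding proposition. The organizing geometric picture is that each application of the triangulation identity mirrors attaching the triangle $(S_1, S_{k-1}, S_k)$ to a flag triangulation of the $(k-1)$-gon on vertices $S_1,\ldots,S_{k-1}$ to produce a flag triangulation of the $k$-gon; using this picture one can group the cross-terms by their cyclic support arcs on the $k$-cycle and match them degree by degree.
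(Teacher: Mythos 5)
Your strategy---induct on $k$ and reduce to the one-step recursion $\Gamma_{S_1,\ldots,S_k} = \Gamma_{S_1,\ldots,S_{k-1}} \bullet \gamma_{S_1,S_{k-1},S_k}$---is the same as the paper's, and the three identities you cite (the triangulation identity, the involution/opposite-orientation relations, the cycle identity) are precisely the tools it deploys. But you stop short of a proof: you expand both sides, note that ``the main obstacle is the combinatorial bookkeeping,'' and then gesture at how the cross-terms ``would'' reassemble without carrying out the cancellation. That cancellation \emph{is} the content of the theorem; announcing the target and the toolkit is not the same as matching the terms.

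There is also a tactical mis-step that would make the gap harder to close than necessary. You propose expanding both $\Gamma$-factors in the elementary symmetric normal form $\Gamma_{S_1,\ldots,S_m}=\mathbf{1}+\sum_{j=1}^{m-2}\mathbf{e}_j(\mu_{S_1,S_2},\ldots,\mu_{S_m,S_1})$, which has an exponential number of monomials. The paper instead works with the complementary ``difference'' form of Corollary~\ref{cor: blade as difference},
$$\Gamma_{S_1,\ldots,S_k} = 1_{S_1\cup\cdots\cup S_k} - \bigl(\mu_{S_1,S_2,\ldots,S_k} + \mu_{S_2,\ldots,S_k,S_1} + \cdots + \mu_{S_k,S_1,\ldots,S_{k-1}}\bigr),$$
which has only $k+1$ terms. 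With this compact form the product $\Gamma_{S_1,\ldots,S_{k-1}}\bullet\gamma_{S_1,S_{k-1},S_k}$ splits into just four blocks: the two mixed blocks of the shape $1_{\bullet}\cdot(\text{sum of }\mu\text{-paths})$ vanish outright using $1_{1\cdots j}\,\mu_{p,q}=0$ for $p,q\in\{1,\ldots,j\}$ together with $\mu_{S_1,S_2}\mu_{S_2,S_1}=0$, and the block pairing the $(k-1)$-paths with the $2$-paths collapses term-by-term to $-(\mu_{S_1,\ldots,S_k}+\cdots+\mu_{S_k,S_1,\ldots,S_{k-1}})$ via the absorption identity $\mu_{p,q}\,\mu_{i,\ldots,j}=-\mu_{i,\ldots,j}$ for $i\le p<q\le j$. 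The two normal forms are interchangeable (they differ by $\mathbf{e}_{k-1}(\mu_{S_1,S_2},\ldots,\mu_{S_k,S_1}) = \sum_{\text{cyclic}}\mu_{\text{path}}$), so nothing is lost by switching; I would do so before attempting the verification, and then actually carry out the block-by-block cancellation rather than leaving it as an acknowledged obstacle.
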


\begin{proof}
	To improve readability, let us temporarily adopt the notation $\mu_{S_i,S_j} = \mu_{i,j}$ and $\gamma_{S_i,S_j,S_k} = \gamma_{i,j,k}$, and 
	$$\mu_{S_1,S_2}\mu_{S_2,S_3} \cdots \mu_{S_{k-1},S_k} = \mu_{1,2,\ldots, k}.$$
	We shall induct on $k$, using the identities
	$$\Gamma_{S_1,\ldots, S_k} = \lbrack\lbrack S_1\cup\cdots\cup S_k\rbrack\rbrack -\left((\mu_{S_1,S_2}\cdots \mu_{S_{k-1},S_k})+(\mu_{S_2,S_3}\cdots \mu_{S_{k},S_1})+\cdots + (\mu_{S_k,S_1}\cdots \mu_{S_{k-2},S_{k-1}})\right),$$
	that is 
	$$\Gamma_{S_1,\ldots, S_k} = 1_{S_1\cup\cdots\cup S_k} -\left( \mu_{12\cdots k}+\mu_{23\cdots k1} +\cdots + \mu_{k1\cdots (k-1)}\right).$$
	For the base case we have 
	$$(\Gamma_{1,2,3} = )\gamma_{1,2,3}  = 1_{123} - (\mu_{1,2}\mu_{2,3}+\mu_{2,3}\mu_{3,1}+\mu_{3,1}\mu_{1,2})=1_{1(k-1)(k)} - \mu_{1,k-1,k}-\mu_{k-1,k,1}- \mu_{k,1,k-1}.$$
	Computing in the flag triangulation gives
	\begin{eqnarray*}
		& & \gamma_{1,2,3}\gamma_{1,3,4}\cdots \gamma_{1,k-2,k-1}\gamma_{1,k-1,k} \\
		& = & (\gamma_{1,2,3}\gamma_{1,3,4}\cdots \gamma_{1,k-2,k-1})\gamma_{1,k-1,k}\\
		& = & (1_{12\cdots (k-1)} - \mu_{1,2,\ldots, k-1}-\mu_{2,3,\ldots,1}-\cdots -\mu_{k-1,1,\ldots, k-2})(1_{1(k-1)(k)} - \mu_{1,k-1,k}-\mu_{k-1,k,1}- \mu_{k,1,k-1})\\
		& = & 1_{12\cdots k} \\
		& - & 1_{12\cdots (k-1)}(\mu_{1,k-1,k}+\mu_{k-1,k,1}+ \mu_{k,1,k-1})\\
		& - & 1_{1(k-1)(k)}( \mu_{1,2,\ldots, k-1}+\mu_{2,3,\ldots,1}+\cdots +\mu_{k-1,1,\ldots, k-2})\\
		& + &  (\mu_{1,k-1,k}+\mu_{k-1,k,1}+ \mu_{k,1,k-1})( \mu_{1,2,\ldots, k-1}+\mu_{2,3,\ldots,1}+\cdots +\mu_{k-1,1,\ldots, k-2})
	\end{eqnarray*}
	We aim to prove that lines 2 and 3 are identically zero:
	$$1_{12\cdots (k-1)}(\mu_{1,k-1,k}+\mu_{k-1,k,1}+ \mu_{k,1,k-1})=0$$
	and
	$$1_{1(k-1)(k)}( \mu_{1,2,\ldots, k-1}+\mu_{2,3,\ldots,1}+\cdots +\mu_{k-1,1,\ldots, k-2})=0.$$
	
	We first establish some essential (but easily verified) identities.  First, we have
	$$\mu_{p,q}\mu_{i,\ldots, j} = -\mu_{i,\ldots, j}$$
	for any $i\le p<q\le j$, as can be checked geometrically by dualizing with $\star$, in which case the convolution $\bullet$ becomes the pointwise product of characteristic functions.  In the dual, $\mu_{p,q}^\star$ takes the value $-1$ on an open half space containing the support of $\mu_{i,i+1,\ldots, p,\ldots, q,\ldots, i-1}^\star$, (and is zero on the complement), hence the pointwise product acts by switching the sign of $\mu_{i,i+1,\ldots, p,\ldots, q,\ldots, i-1}^\star$.
	
	Whenever $p,q\in\{1,\ldots, j\}$ we have
	$$1_{12\cdots j} \mu_{p,q} = 1_{12\cdots j}(1_{pq} - \lbrack\lbrack p,q\rbrack\rbrack) = 1_{12\cdots j} - 1_{12\cdots j} = 0.$$
	
	Now on the other hand, if $p\in \{1,2,\ldots, j\}$ but $q\not\in \{1,2,\ldots, j\}$ then we have
	$$1_{12\cdots j}\mu_{p,q} =1_{12\cdots j}(\lbrack\lbrack p,q\rbrack\rbrack-1)= \lbrack\lbrack 12\cdots j,q\rbrack\rbrack - 1_{12\cdots j}.$$
	Moreover, using 
	$$0= \mu_{S_1,S_2} \mu_{S_2,S_1} = (\lbrack\lbrack S_1,S_2\rbrack\rbrack-1_{S_1}1_{S_2})(\lbrack\lbrack S_2,S_1\rbrack\rbrack-1_{S_1}1_{S_2})$$
	it follows that 
	\begin{eqnarray*}
		&&1_{12\cdots (k-1)}(\mu_{1,k-1}\mu_{k-1,k} + \mu_{k-1,k} \mu_{k,1} + \mu_{k,1}\mu_{1,k-1}) \\
		& = & 0 \cdot \mu_{k-1,k} \\
		& + & (\lbrack\lbrack 12\cdots k-1,k\rbrack\rbrack - 1_{12\cdots k-1})(\lbrack\lbrack k,12\cdots k-1\rbrack\rbrack - 1_{12\cdots k-1})\\
		& + & \mu_{k,1}\cdot 0\\
		& = & 0.
	\end{eqnarray*}
	
	This shows that the second line vanishes.  Proving that for the third line we have
	$$1_{1(k-1)(k)}( \mu_{1,2,\ldots, k-1}+\mu_{2,3,\ldots,1}+\cdots +\mu_{k-1,1,\ldots, k-2})=0$$
	is similar and we omit the computation.
	
	It remains to prove the identity 
	$$(\mu_{1,k-1,k}+\mu_{k-1,k,1}+ \mu_{k,1,k-1})( \mu_{1,2,\ldots, k-1}+\mu_{2,3,\ldots,1}+\cdots +\mu_{k-1,1,\ldots, k-2})$$
	$$=-\left( \mu_{12\cdots k}+\mu_{23\cdots k1} +\cdots + \mu_{k1\cdots (k-1)}\right).$$
	
	Indeed, while
	\begin{eqnarray*}
		& & \mu_{1,k-1}\mu_{k-1,k}( \mu_{1,2,\ldots, k-1}+\mu_{2,3,\ldots,1}+\cdots +\mu_{k-1,1,\ldots, k-2})\\
		& = & (-\mu_{1,\ldots, k-1}\mu_{k-1,k}) + (-\mu_{2,3,\ldots,k-1, 1}\mu_{1,k-1}+\cdots)\\
		& = & (-\mu_{1,\ldots, k-1}\mu_{k-1,k}) + (0+\cdots+0)\\
		& = & -\mu_{1,2,\ldots, k}
	\end{eqnarray*}
	and
	\begin{eqnarray*}
		\mu_{k,1}\mu_{1,k-1}( \mu_{1,2,\ldots, k-1}+\mu_{2,3,\ldots,k-1,1}+\cdots +\mu_{k-1,1,\ldots, k-2}) & = & -(\mu_{k,1,\ldots, k-1}),
	\end{eqnarray*}
	we have the remaining $k-2$ nonzero contributions from $\mu_{k-1,k}\mu_{k,1}$:
	\begin{eqnarray*}
		& & \mu_{k-1,k}\mu_{k,1}( \mu_{1,2,\ldots, k-1}+\mu_{2,3,\ldots,k-1,1}+\cdots +\mu_{k-1,1,\ldots, k-2})\\
		& = & -(\mu_{1,2,\ldots, k-1}\mu_{k-1,k}\mu_{k,1})+\mu_{2,3,\ldots,k-2,k-1}(\mu_{k-1,1}\mu_{k-1,k}\mu_{k,1})+\cdots +(\mu_{k-1,1}\mu_{k-1,k}\mu_{k,1})\mu_{1,\ldots, k-2}\\
		& = & -( \mu_{2,3,\ldots, k,1} +\mu_{3,\ldots, k,1,2}+\cdots + \mu_{k-1,1,\ldots, k-2} ),
	\end{eqnarray*}
	and we finally obtain
	$$\gamma_{1,2,3}\gamma_{1,3,4}\cdots \gamma_{1,k-2,k-1}\gamma_{1,k-1,k} = 1_{12\cdots k} -\left( \mu_{1,2,\ldots, k}+\mu_{2,3,\ldots, k,1} +\cdots + \mu_{k,1,\ldots, (k-1)}\right),$$
	which, after substituting back in the notation $i\mapsto S_i$, becomes exactly $\Gamma_{S_1,\ldots, S_k}$, as desired.
\end{proof}

\section{Blades from triangulations, and factorization independence}

Choose a cyclic (counterclockwise, say) order on the vertices of a polygon with vertices labeled by the (standard) ordered set partition $\mathbf{S}=(S_1,\ldots, S_k)$ of $\{1,\ldots, n\}$, where we assume that $1\in S_1$.  Let 
$$\mathcal{T} = \{((S_{a_1},S_{b_1},S_{c_1})),\ldots, ((S_{a_{k-2}},S_{b_{k-2}},S_{c_{k-2}}))\}$$
be any set of (cyclically oriented) triangles forming a triangulation of the $k$-gon, labeled such that each $(a_i,b_i,c_i)$ satisfies $a_i<b_i<c_i$ or $b_i<c_i<a_i$ or $c_i<a_i<b_i$. 

\begin{defn}
	We shall say that $(S_{a_i},S_{b_i},S_{c_i})$ is a cyclic subword of $(S_1,\ldots, S_k)$ if it satisfies the above condition, $a_i<b_i<c_i$ or $b_i<c_i<a_i$ or $c_i<a_i<b_i$. 
\end{defn}

\begin{prop}\label{prop: triangulation independence complete blade}
	The product 
	$$\Gamma_{\mathcal{T}} = \gamma_{S_{a_1},S_{b_1},S_{c_1}}\gamma_{S_{a_2},S_{b_2},S_{c_2}}\cdots \gamma_{S_{a_{k-2}},S_{b_{k-2}},S_{c_{k-2}}}$$
	is independent of the triangulation $\mathcal{T}$.
\end{prop}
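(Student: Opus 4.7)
The plan is to reduce triangulation independence to a single local move and then check that move at the level of a cyclic quadrilateral. Recall the classical fact that the set of triangulations of a convex $k$-gon forms a connected graph under diagonal flips: any two triangulations $\mathcal{T},\mathcal{T}'$ differ by a finite sequence of flips, where each flip replaces the two triangles of a sub-quadrilateral sharing one diagonal with the two triangles sharing the opposite diagonal. Since the convolution product is commutative and associative on $\hat{\mathcal{P}}^n$ (by Theorem \ref{thm: valuation and convolution}), the product $\Gamma_{\mathcal{T}}$ factors as a product of two tripods meeting along the flipped diagonal times the unchanged remaining tripods. Therefore it suffices to prove invariance of $\Gamma_{\mathcal{T}}$ under a single flip inside any cyclic sub-quadrilateral $(S_a,S_b,S_c,S_d)$.

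The quadrilateral case to verify is the identity
\[
\gamma_{S_a,S_b,S_c}\,\gamma_{S_a,S_c,S_d} \;=\; \gamma_{S_b,S_c,S_d}\,\gamma_{S_b,S_d,S_a},
\]
where both sides are cyclic subwords of $(S_a,S_b,S_c,S_d)$. First I would apply Theorem \ref{thm: blade flag factorization} directly to the $4$-block ordered set partition $(S_a,S_b,S_c,S_d)$ (of the subset $S_a\cup S_b\cup S_c\cup S_d$) to obtain $\gamma_{S_a,S_b,S_c}\gamma_{S_a,S_c,S_d}=\Gamma_{S_a,S_b,S_c,S_d}$. Next, I would observe from Definition \ref{defn:blade} that the blade $((S_1,\ldots,S_k))$ is \emph{cyclically invariant} in its block indexing: the indexing set $\{1\le i<j\le k\}$ together with the convention $S_{k+1}=S_1$ makes the union of Minkowski sums manifestly invariant under the cyclic shift $(S_1,\ldots,S_k)\mapsto (S_2,\ldots,S_k,S_1)$. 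Applying this cyclic shift and then Theorem \ref{thm: blade flag factorization} again (to the flag triangulation rooted at $S_b$) yields $\Gamma_{S_a,S_b,S_c,S_d}=\Gamma_{S_b,S_c,S_d,S_a}=\gamma_{S_b,S_c,S_d}\gamma_{S_b,S_d,S_a}$, matching the right-hand side after using the cyclic index symmetry $\gamma_{i,j,k}=\gamma_{k,i,j}$ if any relabeling is needed.

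Putting the two pieces together, any two triangulations of the $k$-gon are linked by a sequence of flips, each flip leaves $\Gamma_{\mathcal{T}}$ unchanged by the quadrilateral identity above, and so $\Gamma_{\mathcal{T}}$ depends only on the cyclic data $(S_1,\ldots,S_k)$, not on the choice of triangulation. In particular, all triangulations yield the same product, equal to $\Gamma_{S_1,\ldots,S_k}$ by Theorem \ref{thm: blade flag factorization} applied to the flag triangulation rooted at $S_1$.

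The main obstacle I anticipate is not the flip-connectivity of the triangulation graph (which is classical) but rather ensuring that the hypotheses of Theorem \ref{thm: blade flag factorization} apply cleanly to a $4$-block sub-word $(S_a,S_b,S_c,S_d)$ that is an ordered set partition only of $S_a\cup S_b\cup S_c\cup S_d$, not of $\{1,\ldots,n\}$. This should be unproblematic because the proof of Theorem \ref{thm: blade flag factorization} only uses the convolution identities of Proposition \ref{prop: open plates properties} among the $\mu_{S_i,S_j}$'s, which hold verbatim for disjoint subsets of $\{1,\ldots,n\}$; nevertheless, I would verify explicitly that Theorem \ref{thm: blade flag factorization} extends to ordered set partitions of arbitrary subsets, and that the cyclic rotation of blades commutes with this restriction. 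Once this is in hand, the flip-invariance argument completes the proof.
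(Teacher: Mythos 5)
Your proposal is correct but takes a genuinely different route from the paper at the crucial local step. Both you and the paper reduce to flip-connectivity of the triangulation graph and hence to a single quadrilateral identity. The paper then verifies the quadrilateral flip by a direct algebraic expansion: it writes $\gamma_{ijk}\gamma_{ik\ell}=(1+\mu_{ij}+\mu_{jk}+\mu_{ki})(1+\mu_{ik}+\mu_{k\ell}+\mu_{\ell i})$, invokes the open-cone triangulation identity $(\mu_{ij}+\mu_{jk})\mu_{ik}=\mu_{ij}\mu_{jk}-\mu_{ik}$ from Proposition \ref{prop: triangulation identity higher codim} together with $\mu_{ik}\mu_{ki}=0$, and shows after cancellation that the result is symmetric under the flip. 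You instead sidestep the expansion entirely by using Theorem \ref{thm: blade flag factorization} twice and the manifest cyclic invariance of the set $((S_1,\ldots,S_k))$ in Definition \ref{defn:blade}: flag-factor at $S_a$ to get $\Gamma_{a,b,c,d}$, shift cyclically to $\Gamma_{b,c,d,a}$, flag-factor at $S_b$. This is cleaner and more conceptual, at the cost of needing to confirm that Theorem \ref{thm: blade flag factorization} applies to ordered set partitions of proper subsets and to non-standard rotations such as $(S_b,S_c,S_d,S_a)$; you correctly flag this, and indeed the proof of Theorem \ref{thm: blade flag factorization} runs purely on the $\mu$-identities of Proposition \ref{prop: open plates properties}, which are stated for arbitrary disjoint subsets and make no use of standardness, so the extension you need is immediate. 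One small point of taste: the paper's direct computation is in some sense self-contained (it does not lean on the heavier Theorem \ref{thm: blade flag factorization}), whereas your route reveals more clearly that the flip invariance is a shadow of the cyclic symmetry of blades.
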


\begin{proof}
		As any two triangulations of an $n$-gon are related by a sequence of flips, as in Figures \ref{fig:squaretriangulationchange0}, replacing the pair $\{(S_i,S_j,S_k),(S_i,S_k,S_\ell)\}$ with $\{(S_i,S_j,S_\ell),(S_j,S_k,S_\ell)\}$, it suffices to verify directly that $\gamma_{S_i,S_j,S_k}\gamma_{S_i,S_k,S_\ell} = \gamma_{S_i,S_j,S_\ell}\gamma_{S_j,S_k,S_\ell}$.  Abbreviating $\mu_{S_iS_j}$ as $\mu_{ij}$ and $\gamma_{S_i,S_j,S_k}$ as $\gamma_{ijk}$, and all products of the characteristic functions of subspaces, $\lbrack\lbrack S_\ell\rbrack\rbrack$, with 1, we have
	\begin{eqnarray*}
		\gamma_{ijk}\gamma_{ik\ell} & = & \left(1+\mu_{ij}+\mu_{jk}+\mu_{ki}\right)\left(1+\mu_{ik}+\mu_{k\ell}+\mu_{\ell i}\right)\\
		& = & 1+(\mu_{ij}+\mu_{jk} + \mu_{k \ell} + \mu_{\ell i}) + (\mu_{ki} + \mu_{ik}) \\
		& + & (\mu_{ij}+\mu_{jk})\mu_{ik} + \mu_{ki} (\mu_{k\ell} + \mu_{\ell i}) +  (\mu_{ij}\mu_{k\ell} + \mu_{ij}\mu_{\ell i} + \mu_{jk}\mu_{k\ell} + \mu_{jk} \mu_{\ell i}) + \mu_{ik}\mu_{ki} 
	\end{eqnarray*}
	Using the triangulation identity for a three-block (open) plate, $(\mu_{ij}+\mu_{jk})\mu_{ik} =\mu_{ij}\mu_{jk} -\mu_{ik}$ and $\mu_{ki} (\mu_{k\ell} + \mu_{\ell i}) = \mu_{k\ell}\mu_{\ell i} -\mu_{k i} $, as well as $\mu_{ik}\mu_{ki}=0$, after cancellation we obtain
	\begin{eqnarray*}
		\gamma_{ijk}\gamma_{ik\ell} & = & 1+(\mu_{ij}+\mu_{jk} + \mu_{k \ell} + \mu_{\ell i}) +  (\mu_{ij}\mu_{jk}+\mu_{ij}\mu_{k\ell}+\mu_{ij}\mu_{\ell i} + \mu_{jk}\mu_{k\ell} + \mu_{jk}\mu_{\ell i} + \mu_{k\ell} \mu_{\ell i}).
	\end{eqnarray*}
	Performing an analogous computation for $\gamma_{ij\ell}\gamma_{jk\ell}$ yields the same result.
\end{proof}

\begin{figure}[h!]
	\centering
	\includegraphics[width=0.65\linewidth]{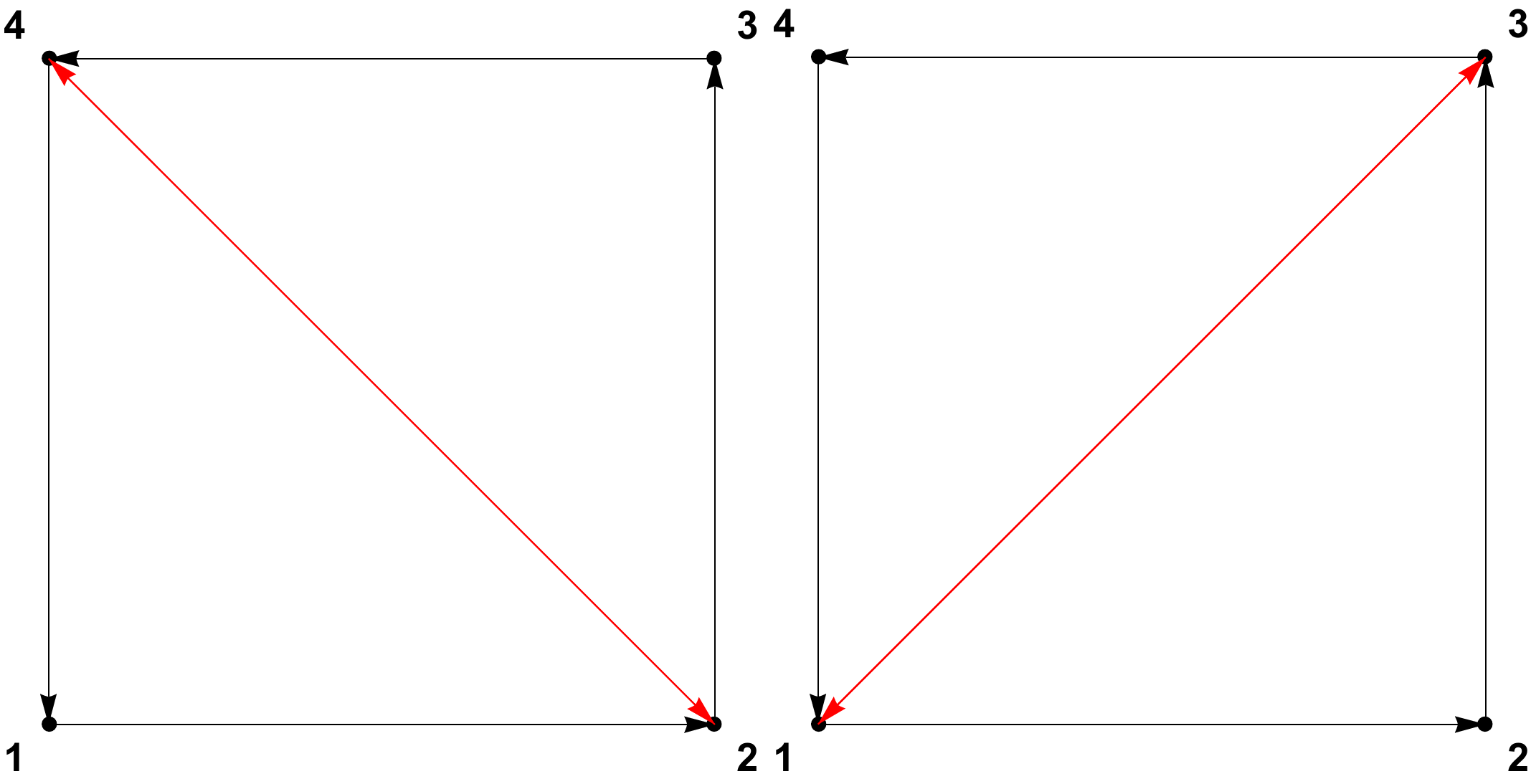}
	\caption{Independence of triangulation of the polygon for Proposition \ref{prop: triangulation independence complete blade} $\{(1,2,4),(2,3,4)\}\leftrightarrow \{(1,2,3),(1,3,4)\}.$}
	\label{fig:squaretriangulationchange0}
\end{figure}

\begin{rem}\label{rem: blade triangulation independent}
	Proposition \ref{prop: triangulation independence complete blade} justifies our usual omission of the triangulation, using instead the notation $\Gamma_{S_1,\ldots, S_k}$ for the characteristic function of the blade labeled by the (standard) ordered set partition $(S_1,\ldots, S_k)$.
\end{rem}
From the ring-theoretic identities for characteristic functions of blades in Theorem \ref{thm: blade flag factorization} and Proposition \ref{prop: triangulation independence complete blade}, we derive the corresponding set-theoretic identity for blades themselves, using Minkowski sums.

\begin{cor}\label{cor: blade Minkowski sum decomposition}
	For a blade $((S_1,\ldots, S_k))$ labeled by a standard ordered set partition $(S_1,\ldots, S_k)$, and any triangulation $\{(S_{a_1},S_{b_1},S_{c_1}),\ldots, (S_{a_{k-2}},S_{b_{k-2}},S_{c_{k-2}})\}$ of the cyclically-oriented polygon with vertices labeled $S_1,\ldots, S_k$, where $a_i<b_i<c_i$ for all $i$, we have
	$$((S_1,\ldots, S_k)) = ((S_{a_1},S_{b_1},S_{c_1}))\oplus\cdots \oplus ((S_{a_{k-2}},S_{b_{k-2}},S_{c_{k-2}})).$$
\end{cor}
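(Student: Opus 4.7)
The plan is to pass from the ring-theoretic identity of characteristic functions, already established earlier in the paper, to the claimed set-theoretic identity, using that convolution realizes Minkowski sum on cones. By Theorem \ref{thm: blade flag factorization} applied in the flag triangulation and Proposition \ref{prop: triangulation independence complete blade} used to switch to the given triangulation, we have in $\hat{\mathcal{P}}^n$ the identity of characteristic functions
$$\Gamma_{S_1, \ldots, S_k} \;=\; \gamma_{S_{a_1}, S_{b_1}, S_{c_1}} \bullet \gamma_{S_{a_2}, S_{b_2}, S_{c_2}} \bullet \cdots \bullet \gamma_{S_{a_{k-2}}, S_{b_{k-2}}, S_{c_{k-2}}}.$$
The left-hand side is by construction the characteristic function of $((S_1,\ldots,S_k))$, and each $\gamma_{T_a,T_b,T_c}$ is the characteristic function of the tripod $((T_a,T_b,T_c))$, so the corollary reduces to recognizing the convolution product on the right as the characteristic function of the Minkowski sum $\bigoplus_i ((S_{a_i},S_{b_i},S_{c_i}))$.

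To establish this, I would decompose each tripod via inclusion-exclusion as
$$\gamma_{T_a, T_b, T_c} \;=\; \lbrack\lbrack T_a, T_b\rbrack\rbrack + \lbrack\lbrack T_b, T_c\rbrack\rbrack + \lbrack\lbrack T_c, T_a\rbrack\rbrack - 2 \cdot 1_{T_a T_b T_c},$$
distribute the convolution by the bilinearity granted by Theorem \ref{thm: valuation and convolution}, and apply that theorem termwise: the convolution on each pure-cone summand yields the characteristic function of a Minkowski sum of half-subspaces. On the set side, by distributivity of Minkowski sum over union, $\bigoplus_i ((S_{a_i}, S_{b_i}, S_{c_i}))$ is the union over all "edge choices" (one closed half-subspace from each tripod) of the corresponding Minkowski sums. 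Applying inclusion-exclusion to this union produces precisely the same signed combination of characteristic functions, identifying the two functions and hence the two sets.

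The main technical obstacle is the bookkeeping for the overlap corrections: the lineality-space terms $-2 \cdot 1_{T_a T_b T_c}$ in each tripod must match on the set side the higher overlaps among Minkowski sums of half-subspaces from different tripods. I would handle this by induction on $k$ using an ear decomposition of the triangulation. The base case $k=3$ is tautological; in the inductive step one peels off an ear triangle of $\mathcal{T}$, reducing the convolution side to the Minkowski sum of the tripod for that ear with the product of the remaining $k-3$ tripods. On the set side, each component of $((S_1,\ldots,S_k))$ labelled by an omitted pair as in Definition \ref{defn:blade} falls into one of two classes -- either the omitted edges both lie in the $(k-1)$-gon obtained by deleting the ear (and contribute via the inductive hypothesis applied to the smaller blade), or one of them is incident to the ear (and contribute via the freshly adjoined tripod) -- and a direct case check gives a bijection between these classes and the Minkowski-summand configurations in the convolution expansion. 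Finally, Proposition \ref{prop: triangulation independence complete blade} propagates the resulting identity from one convenient triangulation to the arbitrary triangulation $\mathcal{T}$.
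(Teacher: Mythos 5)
Your proposal takes the same route as the paper: begin with the ring-theoretic identity $\Gamma_{S_1,\ldots,S_k} = \gamma_{S_{a_1},S_{b_1},S_{c_1}}\bullet\cdots\bullet\gamma_{S_{a_{k-2}},S_{b_{k-2}},S_{c_{k-2}}}$ furnished by Theorem \ref{thm: blade flag factorization} together with Proposition \ref{prop: triangulation independence complete blade}, and then read it off as a characteristic-function identity for the Minkowski sum. The paper's own proof is considerably more terse at the second step: it simply invokes ``the homomorphism property of the convolution product from Theorem \ref{thm: valuation and convolution}'' to rewrite $\Gamma_{S_{a_1},S_{b_1},S_{c_1}}\bullet\cdots$ as $\lbrack ((S_{a_1},S_{b_1},S_{c_1}))\oplus\cdots\rbrack$, then compares the preimages of $1$. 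You are right that Theorem \ref{thm: valuation and convolution} as stated only asserts $\lbrack\pi_1\rbrack\bullet\lbrack\pi_2\rbrack=\lbrack\pi_1\oplus\pi_2\rbrack$ for \emph{permutohedral cones} $\pi_i$, whereas a blade is a non-convex union of cones; so the paper's phrase covers a nontrivial step. Your plan (decompose each tripod into pure-cone summands, distribute $\bullet$ by bilinearity, then match against a set-theoretic inclusion--exclusion of the Minkowski sum, organised by an ear-decomposition induction on $k$) is one reasonable way to justify that step, and is genuinely more detailed than what appears in the paper, but it is only a sketch: the inductive case analysis and the bijection between omitted-pair components and Minkowski-summand configurations are not carried out. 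Also, a small precision issue: your inclusion--exclusion identity $\gamma_{T_a,T_b,T_c}=\lbrack\lbrack T_a,T_b\rbrack\rbrack+\lbrack\lbrack T_b,T_c\rbrack\rbrack+\lbrack\lbrack T_c,T_a\rbrack\rbrack-2\cdot 1_{T_aT_bT_c}$ is correct only when the $T_i$ are singletons; for lumped blocks the union of the three closed half-subspaces $\lbrack T_a,T_b\rbrack$ is strictly smaller than the tripod (it omits most of the lineality space $1_{T_a}1_{T_b}1_{T_c}$), and the pairwise overlap corrections are the $1_{T_i}$ rather than multiples of $1_{T_aT_bT_c}$. The correct identity, used in the paper, is $\gamma_{T_a,T_b,T_c}=1_{T_a}1_{T_b}1_{T_c}+\mu_{T_a,T_b}+\mu_{T_b,T_c}+\mu_{T_c,T_a}$. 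So: same strategy as the paper, a sharper awareness of the gap in passing from convolution of functions to Minkowski sum of sets, but the argument to close that gap remains incomplete and one of its intermediate identities needs fixing for non-singleton blocks.
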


\begin{proof}
	By Theorem \ref{thm: blade flag factorization} and Proposition \ref{prop: triangulation independence complete blade} we have the identity of piecewise constant functions 
	$$\Gamma_{S_1,\ldots, S_k} = \Gamma_{S_{a_1},S_{b_1},S_{c_1}}  \cdots  \Gamma_{S_{a_{k-2}},S_{b_{k-2}},S_{c_{k-2}}}.$$
	Using the homomorphism property of the convolution product from Theorem \ref{thm: valuation and convolution}, we express this as
	$$\lbrack ((S_1,\ldots, S_k))\rbrack =\Gamma_{S_1,\ldots, S_k} = \lbrack ((S_{a_1},S_{b_1},S_{c_1})) \oplus \cdots \oplus ((S_{a_{k-2}},S_{b_{k-2}},S_{c_{k-2}}))\rbrack,$$
	which is a relation of ($\{0,1\}$-valued) characteristic functions which holds identically.  In particular the preimages of 1 on both sides are the same, hence
	$$ ((S_1,\ldots, S_k))= ((S_{a_1},S_{b_1},S_{c_1})) \oplus \cdots \oplus ((S_{a_{k-2}},S_{b_{k-2}},S_{c_{k-2}})),$$
	proving that a blade $((S_1,\ldots, S_k))$ can be expressed (non-uniquely) as a Minkowski sum of tripods $((S_{a_t},S_{b_t},S_{c_t}))$. 
\end{proof}

\section{Quasishuffle expansions for simplicial cones}
	To derive the formula used in the proof of Theorem \ref{thm: canonical basis cyclic sum plate}, we need two results from \cite{EarlyCanonicalBasis}.  We recall these shortly here.

	\begin{defn}
		Given $\ell\ge 1$ ordered set partitions $\mathbf{S}_1,\ldots, \mathbf{S}_\ell$ of subsets $J_1,\ldots, J_\ell$ of $\{1,\ldots, n\}$, an ordered set partition $\mathbf{T}$ of $\{1,\ldots, n\}$ is a \textit{quasishuffle} of $\mathbf{S}_1,\ldots,\mathbf{S}_\ell$ if each block of $\mathbf{S}$ is a union of at most one block each from respectively $\mathbf{S}_1,\ldots, \mathbf{S}_\ell$, and such that if $S_{i,a}$ appears before $S_{i,b}$ in $\mathbf{S}_i$, then $S_{i,a}$ appears (in a subset of a block) strictly before $S_{i,b}$ in $\mathbf{S}$.
	\end{defn}
	Note that we are not requiring that $\mathbf{S}_1,\ldots, \mathbf{S}_\ell$ be supported on disjoint subsets of $\{1,\ldots, n\}$.
	
	For example, $\mathbf{T} = (135,24,67,8)$ is a quasishuffle of the pair $\mathbf{S}_1 = (15,67,8)$ and $\mathbf{S}_2=(3,24,8)$.  
	
	Define $\mathbf{S}_3 = (1,3,67),\ \ \mathbf{S}_4 = (5,24,8)$ and $\mathbf{S}_3' = (13,67)$.
	Then $\mathbf{T}$ is \textit{not} a quasishuffle of the pair $\mathbf{S}_3 ,\mathbf{S}_4$ (here the labels $1$ and $3$ are in different blocks of $\mathbf{S}_3$ but are in the same block in $\mathbf{T}$), but it \textit{is} a quasishuffle of the pair $\mathbf{S}_3',\mathbf{S}_4$.

	In Lemma \ref{lem: Weyl chamber decomposition shuffle} we recall a result which was used in \cite{EarlyCanonicalBasis} from the theory of $(P,\omega)$-partitions to decompose the characteristic function of a union of closed Weyl chambers into an alternating sum of partially closed Weyl chambers in a canonical way that depends on descent positions, with respect to the natural order $(1,\ldots, n)$.  Following \cite{EarlyCanonicalBasis} by translating Proposition 4.6.10 of \cite{Stanley EC1} we obtain Lemma \ref{lem: Weyl chamber decomposition shuffle}.

\begin{lem}\label{lem: Weyl chamber decomposition shuffle}
	We have the decomposition into disjoint sets
	$$V_0^n=\sqcup_{\sigma\in \symm_n} C_\sigma,$$
	where each $C_\sigma$ is the partially open Weyl chamber defined by 
	$$x_{\sigma_i}\ge x_{\sigma_{i+1}}\ \text{ if } \sigma_{i}<\sigma_{i+1}$$
	and
	$$x_{\sigma_i}>x_{\sigma_{i+1}}\ \text{ if } \sigma_{i}>\sigma_{i+1}.$$
	The characteristic function of $C_\sigma$, with $\sigma=(\sigma_1,\ldots, \sigma_n)$ given, is
	$$\lbrack C_\sigma\rbrack=\sum_{\pi}(-1)^{n-\operatorname{len}(\pi)}\lbrack\pi^\star\rbrack,$$
	where the sum is over the set of ordered set partitions $(S_1,\ldots, S_k)$ with blocks
	$$S_i = \{\sigma_p:d_{i-1}<p\le d_i\},$$
	where 
	$$\{d_1<\cdots<d_{k-1}\}=\{i: \sigma_i <\sigma_{i+1}\}$$
	and we define $d_0=0$ and $d_k=n$.
\end{lem}

As in \cite{EarlyCanonicalBasis}, Lemma \ref{lem: Weyl chamber decomposition shuffle}, together with applications of duality, gives the expansion of a Minkowski sum of cones labeled by a set of disjoint ordered set partitions $\mathbf{S}_1,\ldots, \mathbf{S}_\ell$.

	\begin{cor}\label{cor: shuffleLump Identity}
	Given $\ell$ disjoint ordered set partitions
	$$\mathbf{S}_1=(S_{1},S_{2},\ldots, S_{k_1}),\mathbf{S}_2=(S_{k_1+1},S_{k_1+2},\ldots, S_{k_1+k_2}),\ldots, \mathbf{S}_{\ell}=(S_{k_1+\cdots+k_{l-1}+1},\ldots, S_{k_1+\cdots+k_{\ell}})$$
	such that $\bigsqcup_{i=1}^{k_1+\cdots+k_l}S_i=\{1,\ldots, n\}$, then we have the identity for characteristic functions 
	
	$$\lbrack\lbrack\mathbf{S}_1\rbrack\rbrack\bullet \cdots\bullet \lbrack\lbrack \mathbf{S}_{\ell}\rbrack\rbrack= \sum_\pi(-1)^{m-\operatorname{len}(\pi)}\lbrack\mathbf{S}\rbrack,$$
	where $m=k_1+\cdots+k_\ell$ and $\mathbf{S}$ runs over all quasishuffles of $\mathbf{S}_1,\ldots, \mathbf{S}_\ell$
\end{cor}

	\begin{cor}\label{cor:treeSetpartition}
		Let $\{S_1,\ldots, S_k\}$ be a collection of disjoint nonempty subsets of $\{1,\ldots, n\}$.  Let $\mathcal{T}= \{(i_1,j_1),\ldots, (i_{k-1},j_{k-1})\}$ be a directed tree, where $i_a,j_a\in\{1,\ldots, k\}$ with $i_a\not=j_a$.   We have
	$$\lbrack\lbrack S_{i_1},S_{j_1}\rbrack\rbrack\bullet\cdots\bullet \lbrack\lbrack S_{i_{k-1}},S_{j_{k-1}}\rbrack\rbrack=\sum_{\pi:p_{i_a} < p_{j_a}}(-1)^{k-\operatorname{len}(\pi)}\lbrack\pi\rbrack.$$
\end{cor}

\section{Graduated functions of blades}

\begin{defn}
	
	To each ordered set partition $\mathbf{S}=(S_1,\ldots, S_k)$ we assign a piecewise-constant function
	$$\lbrack (S_1,\ldots, S_k)\rbrack = \lbrack\lbrack S_1,S_2,\ldots, S_k\rbrack\rbrack + \lbrack\lbrack S_2,\ldots, S_k,S_1\rbrack\rbrack+\cdots +\lbrack\lbrack S_k,S_1,\ldots, S_{k-1}\rbrack\rbrack.$$
	Denote by $\hat{\mathcal{B}}^n$ the $\mathbb{Q}$-linear span of all such functions, as $(S_1,\ldots, S_k)$ varies over all standard ordered set partitions of $\{1,\ldots, n\}$.
\end{defn}
As the $k$ cyclic block rotations of the standard plate are intersecting, the set of possible values of the sum of their characteristic functions coincides with the set $\{1,2,\ldots, k\}$ and therefore $\lbrack (S_1,\ldots, S_k)\rbrack$ is \textit{not} itself a characteristic function, see Figure \ref{fig:blade-inverse-image-3-coords-2}.  This justifies our new term \textit{graduated.}

\begin{figure}[h!]
	\centering
	\includegraphics[width=0.3\linewidth]{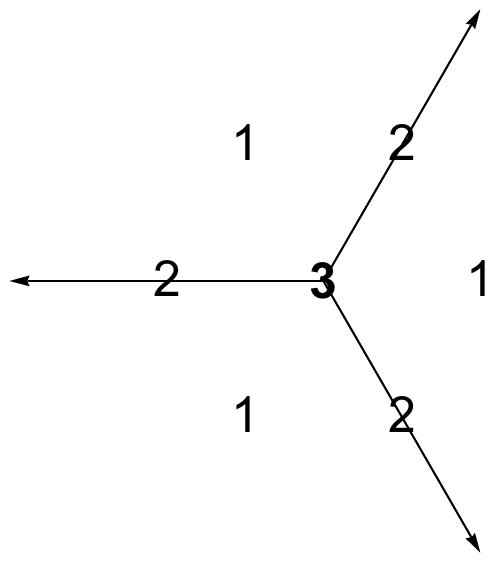}
	\caption{Level sets of the function $$\lbrack (1,2,3)\rbrack = 3 +2(\mu_{1,2} + \mu_{2,3} + \mu_{3,1})+ (\mu_{1,2,3}+\mu_{2,3,1} + \mu_{3,1,2}).$$}
	\label{fig:blade-inverse-image-3-coords-2}
\end{figure}

\begin{prop}\label{prop: linear independence cyclic plate sum}
	The set 
	$$\{\lbrack (S_1,\ldots, S_k)\rbrack:\ (S_1,\ldots, S_k) \text{ is a standard ordered set partition of }\{1,\ldots, n\}\}$$ 
	is linearly independent.
\end{prop}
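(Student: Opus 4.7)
The plan is to pass to the dual side via the involutive operator $(\cdot)^\star$ of Theorem~\ref{thm: duality operator} and exploit that each dual graduated blade is supported on a specific linear subspace determined by the underlying unordered set partition. Since $(\cdot)^\star$ is linear and squares to the identity on the span of characteristic functions of closed convex polyhedral cones, linear independence of $\{\lbrack(S_1,\ldots,S_k)\rbrack\}$ is equivalent to linear independence of their duals.

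For a set partition $\pi$ of $\{1,\ldots,n\}$ with $k$ blocks, let $W_\pi=\{x\in V_0^n : x_a=x_b \text{ whenever } a,b \text{ lie in the same block of }\pi\}$; this is a $(k-1)$-dimensional subspace on which the hyperplanes $x_{(A)}=x_{(B)}$, for distinct blocks $A,B\in\pi$, cut out exactly $k!$ open chambers. The identity $\lbrack\lbrack S_1,\ldots,S_k\rbrack\rbrack^\star=\lbrack\{x : x_{(S_1)}\ge\cdots\ge x_{(S_k)}\}\rbrack$ recorded before Proposition~\ref{prop:cyclic Minkowski sum identity closed plates} identifies the dual of each plate with one of these $k!$ closed chambers. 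Summing over the $k$ cyclic rotations, the dual graduated blade $\lbrack(S_1,\ldots,S_k)\rbrack^\star$ becomes a sum of characteristic functions of $k$ chambers forming a single free $\mathbb{Z}/k$-orbit under block rotation; in particular, its support is contained in $W_\pi$.

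I would then induct downward on $k$ from $n$ to $1$. Suppose $\sum_\tau c_\tau\lbrack(\tau)\rbrack^\star=0$ (indexing cyclic classes by $\tau$ and writing $\pi_\tau$ for the underlying unordered partition), and inductively assume $c_\tau=0$ for every $\tau$ with more than $k$ blocks. For each $\pi$ with $|\pi|=k$, evaluate the surviving relation at a generic point of each open chamber of $W_\pi$. A class $\tau'$ with $\pi_{\tau'}\ne\pi$ has $\lbrack(\tau')\rbrack^\star$ supported on $W_{\pi_{\tau'}}$; using $W_\pi\cap W_{\pi'}=W_{\pi\vee\pi'}$ (join in the partition lattice ordered by refinement), in each case not excluded by the induction---namely, $\pi_{\tau'}$ strictly coarser, of the same size but different, or incomparable to $\pi$---the intersection $W_\pi\cap W_{\pi_{\tau'}}$ is a proper subspace of $W_\pi$, so $\lbrack(\tau')\rbrack^\star$ vanishes at generic chamber points. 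Only the classes $\tau$ with $\pi_\tau=\pi$ survive.

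Since the free $\mathbb{Z}/k$-action partitions the $k!$ chambers of $W_\pi$ into $(k-1)!$ pairwise disjoint orbits, one for each cyclic class with underlying partition $\pi$, the surviving restrictions are characteristic functions of disjoint unions of chambers and are manifestly linearly independent. This forces $c_\tau=0$ for each $\tau$ with $\pi_\tau=\pi$; varying $\pi$ over partitions with $k$ blocks and decrementing $k$ completes the induction (with the degenerate case $k=1$ handled by direct evaluation at the origin). I expect the main obstacle to be the partition-lattice dimension count controlling the restriction to $W_\pi$; once the identification $W_\pi\cap W_{\pi'}=W_{\pi\vee\pi'}$ is in place, the remainder follows immediately from the disjointness of cyclic orbits of block rotation.
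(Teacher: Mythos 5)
Your proof is correct, but it takes a substantially longer and more self-contained route than the paper. The paper's argument is a one-liner: by Proposition~12 of \cite{EarlyCanonicalBasis} the characteristic functions $\lbrack\lbrack S_1,\ldots,S_k\rbrack\rbrack$ of plates indexed by all ordered set partitions are already known to be linearly independent, and each graduated blade $\lbrack(S_1,\ldots,S_k)\rbrack$ is simply the sum of the $k$ plates in a single cyclic-rotation orbit; since these orbits are pairwise disjoint, any linear relation among graduated blades would lift to one among plates, forcing all coefficients to vanish. Your argument, by contrast, does not invoke the cited result at all: you pass to the dual via $\star$, identify $\lbrack(S_1,\ldots,S_k)\rbrack^\star$ with a sum of $k$ closed chambers inside the flat $W_\pi$, and run a downward induction on $|\pi|$ using the support stratification $W_\pi\cap W_{\pi'}=W_{\pi\vee\pi'}$ together with generic-point evaluation. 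In effect you are re-deriving (a dual form of) the plate-independence lemma rather than citing it, which makes your proof longer but self-contained, and makes explicit the geometric mechanism — disjointness of the $\mathbb{Z}/k$-orbits of chambers in each flat — that the paper's one-liner quietly relies on. Both arguments are sound; the paper's is more economical given the available lemma, while yours is portable to settings where that lemma is not yet established.
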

\begin{proof}
	Following Proposition $12$ of \cite{EarlyCanonicalBasis}, noting that the duals of the cones $\lbrack\lbrack S_1,\ldots, S_k\rbrack\rbrack$ labeled by ordered set partitions are faces of Weyl chambers and therefore have disjoint interiors, using the involutive property for the duality valuation $\star$ on convex cones, it follows that the set of functions $\lbrack (S_1,\ldots, S_k)\rbrack$, which are cyclic sums over block rotations of plates, is linearly independent as well, since each plate occurs in precisely one of the cyclic rotations of a unique standard ordered set partition.
\end{proof}

Recall from \cite{EarlyCanonicalBasis} that an ordered set partition $(S_1,\ldots, S_k)$ of the set $\{1,\ldots, n\}$ is \textit{standard} if $S_1$ contains the minimal element in $S_1\cup\cdots\cup S_k$.  A (standard) composite ordered set partition is a set of disjoint ordered set partitions $\{\mathbf{S}_1,\ldots, \mathbf{S}_\ell\}$ of a set $\{1,\ldots, n\}$, if each $\mathbf{S}_i$ is a (standard) ordered set partition of a subset $T_i$ with $\{T_1,\ldots T_\ell\}$ an (unordered) set partition of $\{1,\ldots, n\}$.

\begin{thm}\label{thm: canonical basis cyclic sum plate}
	Let $T$ be a nonempty subset of $\{1,\ldots, n\}$ and let $\mathbf{S}_1,\ldots, \mathbf{S}_\ell$ be a (standard) composite ordered set partition of the set $\{1,\ldots, n\}\setminus T$.  Denote by $m= 1+\vert \mathbf{S}_1\vert + \cdots + \vert \mathbf{S}_\ell\vert$ the total number of blocks in the ordered set partitions $\mathbf{S}_1,\ldots,\mathbf{S}_\ell$, together with the one block $T$.
	
	Then we have
	\begin{eqnarray*}
		\lbrack (T,\mathbf{S}_1)\rbrack\bullet \cdots \bullet \lbrack  (T,\mathbf{S}_\ell)\rbrack & = & \sum_{\mathbf{U}}(-1)^{m-\vert\mathbf{U}\vert }\lbrack (\mathbf{U})\rbrack,
	\end{eqnarray*}
	where the sum is over all standard ordered set partitions $\mathbf{U}=(U_1,\ldots, U_k)$, say, such that $(U_1,\ldots, U_k)$ is a quasi-shuffle of the ordered set partitions
	$$(T,\mathbf{S}_1),\ldots, (T,\mathbf{S}_\ell),$$
	so that in particular $T\subseteq U_1$.
\end{thm}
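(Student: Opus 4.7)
The plan is to dualize both sides using the valuation $\star$ of Theorem \ref{thm: duality operator}. Since $\star$ interchanges the convolution $\bullet$ with pointwise multiplication of characteristic functions, the claim is equivalent to the pointwise identity
$$\prod_{i=1}^\ell \lbrack(T,\mathbf{S}_i)\rbrack^\star = \sum_{\mathbf{U}}(-1)^{m-|\mathbf{U}|}\lbrack(\mathbf{U})\rbrack^\star$$
of piecewise constant functions on $V_0^n$. Using the plate--dual formula $\lbrack S_i,S_j\rbrack^\star = \{x\in V_0^n:x_{(S_i)}\ge x_{(S_j)}\}$ recalled in the proof of Proposition \ref{prop:cyclic Minkowski sum identity closed plates}, each dual graduated blade $\lbrack(\mathbf{S})\rbrack^\star$ is supported on the coordinate subspace $W_\mathbf{S} := \{x\in V_0^n : x_i = x_j \text{ whenever } i,j \text{ lie in a common block of } \mathbf{S}\}$, and at $x\in W_\mathbf{S}$ it returns the number of cyclic rotations of $\mathbf{S}$ whose block-value chain is weakly satisfied at $x$. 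Consequently the LHS is supported on $W := \bigcap_{i=1}^\ell W_{(T,\mathbf{S}_i)} = W_\mathbf{P}$, where $\mathbf{P}$ is the fine common refinement with $m$ blocks $\{T\}\cup\{S_{i,j}\}_{i,j}$; and on the RHS only those $\mathbf{U}$'s whose underlying partition coarsens $\mathbf{P}$ can contribute at points of $W$.

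First I would verify the identity on the open (top-dimensional) stratum of $W$, where all $m$ block values are distinct. On a chamber there corresponding to a total order of these values, the LHS pointwise product is $1$ precisely when the restriction of this total order to the blocks of $(T,\mathbf{S}_i)$ lies in the cyclic class of $(T,\mathbf{S}_i)$ for every $i$ (the \emph{compatibility} condition), and is $0$ otherwise. On the RHS only $\mathbf{U}$ with $|\mathbf{U}|=m$ have support meeting this stratum, and the unique standard such $\mathbf{U}$ whose cyclic class matches the chamber contributes $+1$ exactly when $\mathbf{U}$ satisfies the subword condition. Unwinding the subword condition in the case where no blocks have been merged shows it coincides with the compatibility condition above, so the two sides agree on the open stratum.

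To extend the agreement to the faces of $W$ along which a coarsening $\pi$ of $\mathbf{P}$ identifies certain block values, I would run a M\"obius-style inversion on the poset of coarsenings of $\mathbf{P}$. On the $\pi$-face the LHS pointwise product acquires additional multiplicities because each $\lbrack(T,\mathbf{S}_i)\rbrack^\star$ now counts all cyclic rotations whose weakened (non-strict) chain is satisfied under the equalities imposed by $\pi$; meanwhile new contributions appear on the RHS from standard OSPs $\mathbf{U}$ with $|\mathbf{U}|<m$ whose underlying partition coarsens $\mathbf{P}$ to within $\pi$. The signs $(-1)^{m-|\mathbf{U}|}$, together with the subword condition --- which singles out precisely those coarsenings cyclic-order-compatible with every $(T,\mathbf{S}_i)$ --- are arranged so that these signed contributions reproduce the $\pi$-face multiplicities of the pointwise product. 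The main obstacle is exactly this stratum-by-stratum bookkeeping: matching, on each face, the signed count of admissible $\mathbf{U}$'s to the product of local multiplicities of the $\lbrack(T,\mathbf{S}_i)\rbrack^\star$'s. Once that combinatorial identity is established, applying $\star$ one more time recovers the stated identity in the convolution algebra.
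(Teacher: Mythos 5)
Your approach is genuinely different from the paper's. The paper simply expands each $\lbrack (T,\mathbf{S}_i)\rbrack$ as the sum of characteristic functions of cyclic block rotations of the plate $\lbrack T,\mathbf{S}_i\rbrack$, distributes the convolution over these sums, applies Theorem~21 of \cite{EarlyCanonicalBasis} (the plate-level canonical-basis expansion) to each resulting product of plates, and then regroups the plate terms back into cyclic sums $\lbrack(\mathbf{U})\rbrack$. That argument is short precisely because it delegates all of the combinatorics to the previously established plate result.

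You instead dualize to a pointwise product and try to verify the identity directly, stratum by stratum, on the arrangement of coordinate subspaces in $W_{\mathbf{P}}$. The setup is sound: $\star$ does interchange $\bullet$ with pointwise product, your identification of the support $W_{\mathbf{P}}$ is correct, and the top-stratum verification (matching the ``compatibility'' of a chamber's total order with the cyclic classes of the $(T,\mathbf{S}_i)$ against the subword condition for the unique standard $\mathbf{U}$ with $|\mathbf{U}|=m$) is reasonable and can be made precise.

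However, the decisive step is missing. For the lower-dimensional faces you write that a ``M\"obius-style inversion'' and careful bookkeeping of multiplicities will close the argument, and you explicitly label this bookkeeping as ``the main obstacle.'' That is exactly the content one needs to prove: on a face indexed by a coarsening $\pi$ of $\mathbf{P}$, the pointwise product $\prod_i \lbrack(T,\mathbf{S}_i)\rbrack^\star$ takes an integer value equal to a product of the numbers of weakly-compatible cyclic rotations of each $(T,\mathbf{S}_i)$, and you must show that $\sum_{\mathbf{U}}(-1)^{m-|\mathbf{U}|}\lbrack(\mathbf{U})\rbrack^\star$ reproduces exactly that integer there. Neither the inductive/inclusion-exclusion mechanism that would give this nor even the precise combinatorial identity to be inverted is written down, so the argument as submitted has a genuine gap. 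If you want to keep this route, you should either (a) carry out the face-by-face count explicitly as an inclusion-exclusion over coarsenings, or (b) prove the result for plates first (i.e.\ reprove Theorem~21 of \cite{EarlyCanonicalBasis} in your dual picture) and then appeal to cyclic summation as the paper does, which would localize the hard combinatorics where it is most tractable.
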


\begin{proof}
	In the case when $T=\{1,\ldots, n\}$ there is nothing to prove.  So assume that $T$ is nonempty and proper.

	We shall apply Corollary \ref{cor:treeSetpartition} above to express each characteristic function in the expansion of 
	\begin{eqnarray*}
		\lbrack (T,\mathbf{S}_1)\rbrack\bullet \cdots \bullet \lbrack  (T,\mathbf{S}_\ell)\rbrack
	\end{eqnarray*}
	as a signed sum of characteristic functions labeled by ordered set partitions of $\{1,\ldots, n\}$.  By definition we have 
	\begin{eqnarray*}
		\lbrack (T,\mathbf{S}_1)\rbrack\bullet \cdots \bullet \lbrack  (T,\mathbf{S}_\ell)\rbrack & = & \left(\sum_{j}\lbrack\lbrack T,\mathbf{S}_1\rbrack\rbrack^{(j)}\right)\bullet \cdots \bullet \left(\sum_{j}\lbrack\lbrack T,\mathbf{S}_\ell\rbrack\rbrack^{(j)}\right),
	\end{eqnarray*}
	where the superscript $(j)$ denotes the $j^\text{th}$ block rotation,
	$$\lbrack\lbrack V_1,\ldots, V_a\rbrack\rbrack^{(j)} = \lbrack\lbrack V_j,V_{j+1},\ldots, V_{j-1}\rbrack\rbrack$$
	for $(V_1,\ldots, V_a)$ any ordered set partition.  This expands as 
	\begin{eqnarray*}
		\lbrack (T,\mathbf{S}_1)\rbrack\bullet \cdots \bullet \lbrack  (T,\mathbf{S}_\ell)\rbrack & = & \sum_{j_1,\ldots, j_\ell}\left(\lbrack\lbrack T,\mathbf{S}_1\rbrack\rbrack^{(j_1)}\bullet \cdots \bullet \lbrack\lbrack T,\mathbf{S}_\ell\rbrack\rbrack^{(j_\ell)}\right),
	\end{eqnarray*}
	where we remark now that $\lbrack\lbrack T,\mathbf{S}_i\rbrack\rbrack^{(j_i)}$ is standard only when $j_i=1$.  Applying Corollary \ref{cor:treeSetpartition} to expand each summand as a signed sum of characteristic functions of plates, we obtain
	
	\begin{eqnarray*}
		\lbrack\lbrack T,\mathbf{S}_1\rbrack\rbrack^{(j_1)}\bullet \cdots \bullet \lbrack\lbrack T,\mathbf{S}_\ell\rbrack\rbrack^{(j_\ell)} & = & \sum_{\mathbf{U}^{(j_1,\ldots, j_\ell)}}(-1)^{m-\vert \mathbf{U}^{(j_1,\ldots, j_\ell)}\vert} \lbrack\lbrack \mathbf{U}^{(j_1,\ldots, j_\ell)}\rbrack\rbrack
	\end{eqnarray*}
	and summing both sides over all $(j_1,\ldots, j_\ell)$, after factorization we recover 
	\begin{eqnarray*}
		\lbrack (T,\mathbf{S}_1)\rbrack\bullet \cdots \bullet \lbrack  (T,\mathbf{S}_\ell)\rbrack & = & \sum_{(j_1,\ldots, j_\ell)}\lbrack\lbrack T,\mathbf{S}_1\rbrack\rbrack^{(j_1)}\bullet \cdots \bullet \lbrack\lbrack T,\mathbf{S}_\ell\rbrack\rbrack^{(j_\ell)}\\
		& = & \sum_{(j_1,\ldots, j_\ell)} \sum_{\mathbf{U}^{(j_1,\ldots, j_\ell)}}(-1)^{m-\vert \mathbf{U}^{(j_1,\ldots, j_\ell)}\vert} \lbrack\lbrack \mathbf{U}^{(j_1,\ldots, j_\ell)}\rbrack\rbrack\\
		& = &\sum_{\mathbf{U}}(-1)^{m-\vert\mathbf{U}\vert }\lbrack (\mathbf{U})\rbrack.
	\end{eqnarray*}
\end{proof}
\begin{rem}
	It is tempting to note in Theorem \ref{thm: canonical basis cyclic sum plate} the similarity in the case when $T$ is empty and all blocks $(S_1,\ldots, S_n)$ are singlets, with the so-called \textit{Generalized BDDK relations for the full one-loop Parke-Taylor factors} from \cite{Gomez2018}.  See also Example \ref{example: higher codim blade shuffle} below.
\end{rem}

\begin{example}\label{example: higher codim blade shuffle}
	By Lemma \ref{thm: canonical basis cyclic sum plate}, the characteristic function $\Gamma_{1,2,3}\Gamma_{1,4,5}$ has the following expansion:
	\begin{eqnarray*}
		& & \lbrack (1,2,3)\rbrack \bullet \lbrack (1,4,5)\rbrack \\
		& = &  \lbrack(1,2,3,4,5)\rbrack+\lbrack(1,2,4,3,5)\rbrack+\lbrack(1,2,4,5,3)\rbrack+\lbrack(1,4,2,3,5)\rbrack+\lbrack(1,4,2,5,3)\rbrack+\lbrack(1,4,5,2,3)\rbrack\\
		& - & \lbrack(1,2,4,35)\rbrack-\lbrack(1,2,34,5)\rbrack-\lbrack(1,4,2,35)\rbrack-\lbrack(1,4,25,3)\rbrack-\lbrack(1,24,3,5)\rbrack - \lbrack(1,24,5,3)\rbrack\\
		& + & \lbrack(1,24,35)\rbrack.
	\end{eqnarray*}
	In the functional representation $\lbrack\lbrack i,j\rbrack\rbrack \mapsto \frac{1}{1-x_i/x_j}$ (See \cite{EarlyCanonicalBasis} for well-definedness) we have
	$$\lbrack (1,2,3,4,5)\rbrack \mapsto \frac{x_2 x_3 x_4 x_5}{\left(x_1-x_2\right) \left(x_2-x_3\right) \left(x_3-x_4\right) \left(x_4-x_5\right)}+\frac{ x_3 x_4 x_5x_1}{\left(x_2-x_3\right) \left(x_3-x_4\right) \left(x_4-x_5\right) \left(x_5-x_1\right)}+\cdots $$
	$$=\frac{x_3 x_4 x_5 x_1^2+x_2 x_3 x_4^2 x_1+x_2 x_3^2 x_5 x_1+x_2^2 x_4 x_5 x_1-5 x_2 x_3 x_4 x_5 x_1+x_2 x_3 x_4 x_5^2}{\left(x_1-x_2\right) \left(x_2-x_3\right) \left(x_3-x_4\right)  \left(x_4-x_5\right)\left(x_5-x_1\right)},$$
	and one can verify that by partial fraction identities, modulo non-pointed cones (these are labeled by ordered set partitions having a block of size bigger than 1) we have
	\begin{eqnarray*}
		& & \lbrack (1,2,3)\rbrack \bullet \lbrack (1,4,5)\rbrack\\
		&  = &  \lbrack(1,2,3,4,5)\rbrack+\lbrack(1,2,4,3,5)\rbrack+\lbrack(1,2,4,5,3)\rbrack+\lbrack(1,4,2,3,5)\rbrack+\lbrack(1,4,2,5,3)\rbrack+\lbrack(1,4,5,2,3)\rbrack\\
		& \mapsto  & \frac{x_3 x_4 x_5 x_1^2+x_2 x_3 x_4^2 x_1+x_2 x_3^2 x_5 x_1+x_2^2 x_4 x_5 x_1-5 x_2 x_3 x_4 x_5 x_1+x_2 x_3 x_4 x_5^2}{\left(x_1-x_2\right) \left(x_2-x_3\right) \left(x_3-x_4\right)  \left(x_4-x_5\right)\left(x_5-x_1\right)} + \cdots  \\
		&  = & \frac{\left(x_3 x_1^2+x_2^2 x_1-3 x_2 x_3 x_1+x_2 x_3^2\right) \left(x_5 x_1^2+x_4^2 x_1-3 x_4 x_5 x_1+x_4 x_5^2\right)}{\left(x_1-x_2\right) \left(x_2-x_3\right) \left(x_3-x_1\right)\left(x_1-x_4\right) \left(x_4-x_5\right)\left(x_5-x_1\right) }.
	\end{eqnarray*}
	Substituting $x_i= e^{-\varepsilon y_i}$ and series expanding in $\varepsilon$, truncating at order $\mathcal{O}(\varepsilon^1)$ we obtain a sum of elementary symmetric functions:
	\begin{eqnarray*}
		&&\lbrack (1,2,3,4,5)\rbrack \mapsto \sum_{j=0}^3\frac{1}{(4-j)!}e_j\left(\frac{1}{y_1-y_2},\ldots, \frac{1}{y_5-y_1}\right)\varepsilon^{-j}+\mathcal{O}(\varepsilon),
	\end{eqnarray*}
	where $\mathcal{O}(\varepsilon)$ contains powers of $\varepsilon^p$ for $p\ge 1$, and where the right-hand side of the shuffle identity becomes
	\begin{eqnarray*}
		&& \frac{\left(-y_1^2+y_2 y_1+y_3 y_1-y_2^2-y_3^2+y_2 y_3\right) \left(-y_1^2+y_4 y_1+y_5 y_1-y_4^2-y_5^2+y_4 y_5\right)}{\left(y_1-y_2\right) \left(y_2-y_3\right) \left(y_3-y_1\right) \left(y_1-y_4\right) \left(y_4-y_5\right) \left(y_5-y_1\right)}\varepsilon^{-2}\\
		& + &  \frac{1}{2}\left(\frac{1}{y_1-y_2}+\frac{1}{y_2-y_3}+\frac{1}{y_3-y_1}+\frac{1}{y_1-y_4}+\frac{1}{y_4-y_5}+\frac{1}{y_5-y_1}\right)\varepsilon^{-1}\\
		&& +\frac{1}{4} +\mathcal{O}(\varepsilon).
	\end{eqnarray*}
	Warning: in the expansions of the functional representation built from
	$$\lbrack\lbrack i,j\rbrack \mapsto \frac{1}{1-x_i/x_j}$$
	above, we have drastically truncated an infinite series; as we have seen that elementary symmetric functions appear in the expansion of the characteristic function of a blade, it is not surprising that it appears for generating functions.  However for larger $n$ the structure of the expansion is considerably more complex and interesting, due to the nature of the well-known function
	$$\frac{1}{1-e^{-z}},$$
	see for example \cite{Berline}.
	We leave such questions to future work.
	
	Note that in the functional representation of type $\lbrack\lbrack i,j\rbrack\rbrack \mapsto \frac{1}{x_i-x_j}$, since characteristic functions of \textit{both} non-pointed cones \textit{and} higher codimension cones  are in the kernel of the Laplace transform valuation, we have 
	$$\lbrack (1,2,3,4,5)\rbrack \mapsto \frac{1}{(x_1-x_2)(x_2-x_3)\cdots (x_4-x_5)}+\frac{1}{(x_2-x_3)\cdots (x_4-x_5)(x_5-x_1)}+\cdots=0.$$
\end{example}
\begin{rem}
	Consider now one of the other functional representations mentioned in \cite{EarlyCanonicalBasis}, 
	\begin{eqnarray}\label{eq: bigshuffle higher codim blade}
	\lbrack\lbrack S_1,\ldots, S_k\rbrack &\mapsto &\prod_{i=1}^k\frac{1}{1-\prod_{j\in S_i}x_j},
	\end{eqnarray}
	for example
	$$
	\lbrack\lbrack 1,2,3,4,5\rbrack\rbrack \mapsto \frac{1}{\left(1-x_1\right) \left(1-x_1 x_2\right) \left(1-x_1 x_2 x_3\right) \left(1-x_1 x_2 x_3 x_4\right) \left(1-x_1 x_2 x_3 x_4 x_5\right)}.$$
	The functional representation of Equation \eqref{eq: bigshuffle higher codim blade} is quite complicated and the numerator for the expansion of $\lbrack (1,2,3)\rbrack \bullet \lbrack (1,4,5)\rbrack$ appears not to factor nicely into the product of two irreducible polynomials similar to 
	$$\frac{\left(x_3 x_1^2+x_2^2 x_1-3 x_2 x_3 x_1+x_2 x_3^2\right) \left(x_5 x_1^2+x_4^2 x_1-3 x_4 x_5 x_1+x_4 x_5^2\right)}{\left(x_1-x_2\right) \left(x_2-x_3\right) \left(x_3-x_1\right)\left(x_1-x_4\right) \left(x_4-x_5\right)\left(x_5-x_1\right) },$$
	but rather the numerator is a monstrous polynomial with 14781 monomials which likely doesn't factor at all. The complexity here likely arises because the blades $((1,2,3))$ and $((1,4,5))$ are not in orthogonal subspaces.  On the other hand, the expansion using Equation \eqref{eq: bigshuffle higher codim blade} of for example $\lbrack (1,2,3)\rbrack \bullet \lbrack(4,5)\rbrack $ and $\lbrack (1,2,3)\rbrack \bullet \lbrack (4,5,6)\rbrack$ both can be seen to have numerators which factor as products of two irreducible polynomials.
	
	See Examples 47 and 48 in \cite{EarlyCanonicalBasis} for work with this functional representation for generalized permutohedral cones that are encoded by directed trees.
	
	The leading order term, the coefficient of $\varepsilon^{-5}$ in its analogous approximation using $x_i = e^{-\varepsilon y_i}$,
	$$\lbrack\lbrack 1,2,3,4,5\rbrack\rbrack \mapsto \frac{1}{y_1 \left(y_1+y_2\right) \left(y_1+y_2+y_3\right) \left(y_1+y_2+y_3+y_4\right) \left(y_1+y_2+y_3+y_4+y_5\right)}\varepsilon^{-5}+\mathcal{O}(\varepsilon^{-4}),$$
	is also quite interesting, but it also appears to lack a meaningful factorization property for convolutions involving non-orthogonal subspaces.
\end{rem}
\begin{example}\label{example: U(1) decoupling}
Recall the definition of the Parke-Taylor factor
$$PT(i_1,\ldots, i_n) = \frac{1}{(x_{i_1}-x_{i_2})(x_{i_2} - x_{i_3})\cdots (x_{i_n}-x_{i_1})},$$
where $x_1,\ldots, x_n$ are complex variables, see \cite{ParkeTaylor}.  There is a so-called $U(1)$-decoupling identity,
$$PT(i_1,i_2\ldots,i_n) + PT(i_1,i_3\ldots,i_n,i_2)  + \cdots +PT(i_1,i_n,i_2,\ldots,i_{n-1})=0,$$
which has an analog for blades.  Let us illustrate what it looks like in the first nontrivial case.

In the functional representation 
$$\lbrack\lbrack S_1,\ldots, S_k\rbrack\rbrack \mapsto \prod_{i=1}^k\frac{1}{1-\prod_{j\in S_1\cup\cdots\cup S_i}x_j},$$
for example
$$
\lbrack\lbrack 1,23,4\rbrack\rbrack \mapsto \frac{1}{\left(1-x_1\right) \left(1-x_1 x_2 x_3\right) \left(1-x_1 x_2 x_3 x_4\right)},$$
we have
\begin{eqnarray*}
	\lbrack (2,3,4)\rbrack & \mapsto &  \frac{1}{\left(1-x_4\right) \left(1-x_2 x_4\right) \left(1-x_2 x_3 x_4\right)}+\frac{1}{\left(1-x_3\right) \left(1-x_3 x_4\right) \left(1-x_2 x_3 x_4\right)}\\
	& + & \frac{1}{\left(1-x_2\right) \left(1-x_2 x_3\right) \left(1-x_2 x_3 x_4\right)} \\
	& = & \frac{1}{\left(1-x_2\right) \left(1-x_2 x_3\right) \left(1-x_4\right)}+\frac{1}{\left(1-x_3\right) \left(1-x_4\right) \left(1-x_4 x_2\right)}\\
	& + & \frac{1}{\left(1-x_2\right) \left(1-x_3\right) \left(1-x_3 x_4\right)}+\frac{1}{1-x_2 x_3 x_4}\\
	& - & \frac{1}{\left(1-x_2\right) \left(1-x_3\right) \left(1-x_4\right)}.
\end{eqnarray*}

Recall that 
\begin{eqnarray*}
	\lbrack (1,2,3,4)\rbrack & = & \lbrack\lbrack 1,2,3,4\rbrack\rbrack + \lbrack\lbrack 2,3,4,1\rbrack\rbrack + \lbrack\lbrack 3,4,1,2\rbrack\rbrack + \lbrack\lbrack 4,1,2,3\rbrack\rbrack.
\end{eqnarray*}
Summing over all standard ordered set partitions which contain $(1),(2,3,4)$ as cyclic subwords, after many partial fraction identities we obtain the same, but multiplied by $\frac{1}{1-x_1}$:
\begin{eqnarray*}
	&&\lbrack(1,2,3,4)\rbrack + \lbrack(1,3,4,2)\rbrack + \lbrack(1,4,2,3)\rbrack - \lbrack(12,3,4)\rbrack - \lbrack(13,4,2)\rbrack - \lbrack(14,2,3)\rbrack\\
	& \mapsto & \frac{1}{(1-x_1)(1-x_2)(1-x_2x_3)(1-x_4)} +  \frac{1}{(1-x_1)(1-x_3)(1-x_4)(1-x_4x_2)}\\
	& + & \frac{1}{(1-x_1)(1-x_3)(1-x_3x_4)(1-x_2)} + \frac{1}{(1-x_1)(1-x_2x_3x_4)}\\
	& - &\frac{1}{(1-x_1)\left(1-x_2\right) \left(1-x_3\right) \left(1-x_4\right)}.
\end{eqnarray*}
\end{example}

\begin{defn}
	We call an ordered set partition $(S_1,S_2,\ldots, S_k)$ of $\{1,\ldots, n\}$ \textit{2-standard} if both $(S_1,S_2,\ldots ,S_k)$ and $(S_2,\ldots, S_k)$ are standard ordered set partitions of respectively $\{1,\ldots, n\}$ and $\{1,\ldots, n\}\setminus S_1$.
	
	The set $\{(T,\mathbf{S}_1), \ldots,(T,\mathbf{S}_\ell)\}$ is a 2-standard composite ordered set partition of $\{1,\ldots, n\}$ if each $(T,\mathbf{S}_i)$ is 2-standard.
\end{defn}
In other words, for an ordered set partition of $\{1,\ldots, n\}$ the condition is that $S_1$ contains $1$, and $S_2$ contains the minimal element in the complement of $S_1$ in $\{1,\ldots, n\}$.  For example, $(1,2,4,3,5)$ and $(15,2,4,3)$ are 2-standard, but $(125,4,3)$ is not.

In what follows, after proving in Theorem \ref{thm: cyclic sum canonical basis} that it spans and is linearly independent, we shall call the set
$$\{\lbrack (T,\mathbf{S}_1)\rbrack\bullet \cdots \bullet \lbrack  (T,\mathbf{S}_\ell)\rbrack:  \text{ each }(T,\mathbf{S}_i)\text{ is a 2-standard ordered set partition}\}.$$
the \textit{canonical} basis for $\hat{\mathcal{B}}^n$.  If the set $\{(T,\mathbf{S}_1),\ldots, (T,\mathbf{S}_\ell)\}$ contains only 2-standard ordered set partitions, then it shall be called a \textit{blade-canonical} composite ordered set partition.

Following the procedure of \cite{EarlyCanonicalBasis}, we define an endomorphism on $\hat{\mathcal{B}}^n$, taking the given basis of functions $\lbrack(S_1,\ldots S_k)\rbrack$ labeled by standard ordered set partitions to the canonical one, labeled by 2-standard composite ordered set partitions.  This will prove linear independence for the canonical basis.

We now define a bijection $\mathcal{U}_{\mathcal{B}}$ between standard ordered set partitions and 2-standard composite ordered set partitions.  It is closely related to the bijection from \cite{EarlyCanonicalBasis} between ordered set partitions and standard composite ordered set partitions.

Let $(S_1,\ldots, S_k)$ be a standard (i.e., $1\in S_1$) ordered set partition of $\{1,\ldots, n\}$.  Setting $T=S_1$, we apply the bijection in \cite{EarlyCanonicalBasis} to the ordered set partition $(S_2,\ldots, S_k)$ of $\{1,\ldots, n\}\setminus S_1$.  Namely, let us first recall the algorithm from Proposition 25 in \cite{EarlyCanonicalBasis}.  Define a subset 
$$\{i_1,\ldots, i_m\}\subseteq \{2,\ldots, k\}$$
with $i_1>i_2>\cdots>i_m$, such that $S_{i_1}$ contains the smallest label in the set $S_{i_1}\cup S_{i_1+1}\cup \cdots\cup S_k$, and in general $S_{i_p}$ contains the smallest label in the set $S_{i_p}\cup S_{i_p+1}\cup\cdots \cup S_{i_{p-1}-1}$.  

Now set 
$$\mathcal{U}_{\mathcal{B}}(S_1,\ldots, S_k) = \{(S_1,\mathbf{U}_{1}),(S_1,\mathbf{U}_{2})\ldots, (S_1,\mathbf{U}_{m})\},$$
where $\mathbf{U}_{a} = (S_{i_a},S_{i_{a}+1},\ldots, S_{i_{a-1}-1})$.

Recall from \cite{EarlyCanonicalBasis} the lexicographic partial order on ordered set partitions: given two ordered set partitions $(S_1,\ldots, S_k)$ and $(T_1,\ldots, T_\ell)$ of $\{1,\ldots, n\}$, define position vectors $(p_1,\ldots, p_n)$ and $(q_1,\ldots, q_n)$ whenever $i\in S_{p_i}$ and $j\in T_{q_j}$. We say that in the lexicographic order $(S_1,\ldots, S_k) \prec (T_1,\ldots, T_\ell)$ if the sequence $(p_1,\ldots, p_n)$ is lexicographically smaller than $(q_1,\ldots, q_n)$.

\begin{rem}
	It is easy to see that when $1\in S_1$ then the ordered set partition $(S_1,\ldots, S_k)$ of $\{1,\ldots, n\}$ is lexicographically minimal among its cyclic block rotations.  
\end{rem}
\begin{example}
	For standard ordered set partitions we have, in increasing lexicographic order, standard ordered set partitions together with their position vectors,
	\begin{eqnarray*}
		(12345) & \Leftrightarrow & \mathbf{p} = (1,1,1,1,1)\\
		(1,234,5,67) & \Leftrightarrow &\mathbf{p} = (1,2,2,2,3,4,4)\\
		(1,23,4,5,67) & \Leftrightarrow &\mathbf{p} = (1,2,2,3,3,4,4)\\
		(1,3,24,5,67) & \Leftrightarrow &\mathbf{p} = (1,3,2,3,3,4,4)\\
		(1,5,4,3,2) & \Leftrightarrow &\mathbf{p} = (1,5,4,3,2)\\
		(1\ 5,11,4,12,10,2,8\ 9,3,6\ 7) & \Leftrightarrow & \mathbf{p} = (1,6,8,3,1,9,9,7,7,5,2,4)
	\end{eqnarray*}
	and 
	\begin{eqnarray*}
		\mathcal{U}_{\mathcal{B}}(12345) &  =&  \{(12345)\}\\
		\mathcal{U}_\mathcal{B}(1,234,5,67)  & = &  \{(1,234,5,67)\}\\
		\mathcal{U}_{\mathcal{B}}(1,3,24,5,67) & = & \{(1,24,5,67),(1,3)\}\\
		\mathcal{U}_{\mathcal{B}}(1,5,4,3,2) & =&  \{(1,2),(1,3),(1,4),(1,5)\}\\
\mathcal{U}_\mathcal{B}(1\ 5,11,4,12,10,2,8\ 9,3,6\ 7) & =&  \{(1\ 5,2,8\ 9,3,6\ 7),(1\ 5,4,12,10),(1\ 5,11)\}.
	\end{eqnarray*}
	
\end{example}

Theorem \ref{thm: cyclic sum canonical basis} finally establishes linear independence for the canonical basis for graduated functions of blades; the main component of proof, lexicographic maximality of $\mathbf{S}$ among the quasi-shuffles of $\mathcal{U}_\mathcal{B}(\mathbf{S})$, is illustrated in Figure \ref{fig:5-7-2019-canonical-basis-permutations}.

\begin{thm}\label{thm: cyclic sum canonical basis}
	The linear map induced by the bijection $\mathcal{U}_\mathcal{B}$ is invertible.  The set 
	$$\{\lbrack (T,\mathbf{U}_1)\rbrack\bullet \cdots \bullet \lbrack  (T,\mathbf{U}_\ell)\rbrack:  \text{ each }(T,\mathbf{U}_i)\text{ is a 2-standard ordered set partition}\}.$$
	is linearly independent (and hence is a basis) for the space $\hat{\mathcal{B}}^n$.
\end{thm}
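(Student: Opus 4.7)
The proof strategy closely parallels the corresponding argument for the plate canonical basis in \cite{EarlyCanonicalBasis}. The goal is to show that the endomorphism of $\hat{\mathcal{B}}^n$ induced by $\mathcal{U}_\mathcal{B}$, which sends each basis vector $\lbrack(\mathbf{U})\rbrack$ to the proposed canonical basis element
$$F(\mathbf{U}) := \lbrack(T,\mathbf{S}_{i_1})\rbrack\bullet \cdots \bullet \lbrack(T,\mathbf{S}_{i_\ell})\rbrack,$$
is represented, in the basis of Proposition \ref{prop: linear independence cyclic plate sum}, by a matrix which is lower-unitriangular with respect to the lexicographic order $\prec$ on standard ordered set partitions of $\{1,\ldots,n\}$. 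Invertibility then follows, as does linear independence of the image.

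Applying Theorem \ref{thm: canonical basis cyclic sum plate} to expand $F(\mathbf{U})$: since the total number of blocks appearing in $\mathcal{U}_\mathcal{B}(\mathbf{U})$ counted with $T$ equals $k = \vert\mathbf{U}\vert$, we have
$$F(\mathbf{U}) = \sum_{\mathbf{V}} (-1)^{k-\vert\mathbf{V}\vert}\lbrack(\mathbf{V})\rbrack,$$
where $\mathbf{V}$ ranges over standard OSPs of $\{1,\ldots,n\}$ such that each $(T,\mathbf{S}_{i_a}) = (S_1,S_{i_a},\ldots,S_{i_{a-1}-1})$ is a subword of some cyclic rotation of $\mathbf{V}$. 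In particular $\mathbf{U}$ itself lies in the support, because each $(T,\mathbf{S}_{i_a})$ appears as a literal subword of $\mathbf{U}$ via positions $1,i_a,i_a+1,\ldots,i_{a-1}-1$, and it contributes the diagonal coefficient $(-1)^0 = +1$.

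The heart of the argument is the combinatorial claim that every $\mathbf{V}\neq\mathbf{U}$ in the support satisfies $\mathbf{V}\prec\mathbf{U}$. First, standardness of $\mathbf{V}$ together with the subword restriction $V_1\cap(S_1\cup\mathbf{S}_{i_a})=S_1$ for every $a$ (the cyclic rotation of the target must be trivial because $V_1$ contains $1$) implies $V_1 = S_1$; hence the position sequences $(p_r)$ and $(q_r)$ of $\mathbf{U}$ and $\mathbf{V}$ agree on $r\in S_1$. Next invoke the min-rule defining $\mathcal{U}_\mathcal{B}$: $i_1$ is the position in $(S_2,\ldots,S_k)$ of the overall minimum $m^\ast$ of $\{1,\ldots,n\}\setminus S_1$, and iteratively $i_2,i_3,\ldots$ select the next-smallest-min positions. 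Consequently, inside $\mathbf{U}$ the runs $\mathbf{S}_{i_\ell},\mathbf{S}_{i_{\ell-1}},\ldots,\mathbf{S}_{i_1}$ appear left-to-right in \emph{decreasing} order of block-minima, so the block containing the smallest label outside $S_1$ lies at the latest admissible position. In any compatible $\mathbf{V}$ the runs can only be shuffled while preserving internal orders, which forces the position $q_{m^\ast}$ of $m^\ast$ to satisfy $q_{m^\ast}\leq i_1 = p_{m^\ast}$; strict inequality gives $\mathbf{V}\prec\mathbf{U}$ at position $m^\ast$, while equality permits an inductive step on the next run $\mathbf{S}_{i_2}$. Any merging of blocks $(\vert\mathbf{V}\vert<k)$ caps the maximal block index appearing in $(q_r)$ below $k$, so some $q_r$ falls below the corresponding $p_r$ and again yields $\mathbf{V}\prec\mathbf{U}$.

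Combining these pieces, the matrix of $\mathcal{U}_\mathcal{B}$ with respect to the basis $\{\lbrack(\mathbf{U})\rbrack\}$ is lower-unitriangular in lex order, hence invertible; consequently $\mathcal{U}_\mathcal{B}$ is a linear automorphism of $\hat{\mathcal{B}}^n$ and the proposed canonical basis $\{F(\mathbf{U})\}$, indexed via $\mathcal{U}_\mathcal{B}$ by 2-standard composite OSPs, is linearly independent. The main obstacle is the combinatorial claim of the previous paragraph: while the min-rule intuitively places small elements as rightmost in $\mathbf{U}$ as possible, a rigorous treatment must handle both the shuffle and merge deviations in a unified induction on the position $r$, verifying that the first discrepancy between $(p_r)$ and $(q_r)$ always satisfies $q_r<p_r$.
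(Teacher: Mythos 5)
Your proposal follows the same route as the paper: expand the canonical element via Theorem \ref{thm: canonical basis cyclic sum plate}, observe that $\lbrack(\mathbf{U})\rbrack$ itself appears with coefficient $1$, argue every other term in the support is lexicographically strictly smaller, and conclude unitriangularity (the paper says ``upper,'' you say ``lower'' --- this is only a convention on ordering the basis). The paper's own proof is in fact terser than yours, simply deferring the combinatorial triangularity argument to Theorem 28 of \cite{EarlyCanonicalBasis}; your sketch of that argument is in the right direction but, as you acknowledge, not fully rigorous (in particular the assertion $V_1 = S_1$ should be weakened to $V_1 \supseteq S_1$, with strict containment already forcing $\mathbf{V}\prec\mathbf{U}$), so you correctly flag the residual gap.
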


\begin{proof}
	The proof that the bijection $\mathcal{U}_\mathcal{B}$ is upper-unitriangular with respect to the lexicographic order follows the same procedure which was used in the proof of Theorem 28 in \cite{EarlyCanonicalBasis}.  
	
	Let us suppose for clarity that we are given an ordered set partition of $\{1,\ldots, n\}$ with only singlet blocks, $\mathbf{S} = (b_1,\ldots, b_n)$ with $b_1=1$; the case when blocks are present which are of size $\ge2$ will then follow immediately.
	
	Then we obtain standard ordered set partitions (so, all blocks are singlets and the minimal entry of each $C_i$ is in the first position) $C_1,\ldots, C_m$ such that 	
	$$\mathcal{U}_\mathcal{B}(\mathbf{S}) = \{(1,C_1),(1,C_2),\ldots, (1,C_m)\},$$
	so that $\mathbf{S}$ is recovered as the concatenation $(1,C_m,C_{m-1},\ldots, C_1)$.  Evidently, in the set of quasi-shuffles of $\mathcal{U}_\mathcal{B}(\mathbf{S})$, the concatenation $(1,C_1,C_2,\ldots, C_m)$ (where the label $p_2=2$, with small minima pushed to the left and large minima pushed to the right) is lexicographically smallest, and in particular, the concatenation $(1,C_m,C_{m-1},\ldots, C_1)=\mathbf{S}$ is lexicographically largest (with small minima pushed to the right and large minima pushed to the left, so for example the index $2$ is pushed as far right as possible and hence $p_2$ as large as possible).  A similar argument implies the same for ordered set partitions with non-singlet blocks.
	
	Thus, if $(S_1,\ldots, S_k)$ is a standard ordered set partition of $\{1,\ldots, n\}$ and we apply Theorem \ref{thm: canonical basis cyclic sum plate} to expand $\mathcal{U}_\mathcal{B}\left(\lbrack (S_1,\ldots, S_k)\rbrack\right)$, then every blade in the expansion is labeled by a standard ordered set partition that is lexicographically strictly smaller than $(S_1,\ldots, S_k)$; since $\lbrack (S_1,\ldots, S_k)\rbrack$ itself is a quasi-shuffle of $\mathcal{U}_\mathcal{B}\left(\lbrack (S_1,\ldots, S_k)\rbrack\right)$, it follows that the matrix for $\mathcal{U}_\mathcal{B}$ is upper unitriangular, hence invertible.
\end{proof}

\begin{example}
	For $\mathbf{S}=(1\ 9,12,7,11,10,5,8,6\ 13,2,4,3\ 14)$ we have
	$$\mathcal{U}_\mathcal{B}(\mathbf{S}) = \{(1\ 9,2,4,3\ 14),(1\ 9,5,8,6\ 13),(1\ 9,7,11,10),(1\ 9,12)\},$$
	so 
	$$\mathcal{U}_\mathcal{B}\lbrack\mathbf{S}\rbrack = \lbrack (1\ 9,2,4,3\ 14)\rbrack \bullet \lbrack (1\ 9,5,8,6\ 13)\rbrack \bullet \lbrack(1\ 9,7,11,10)\rbrack \bullet \lbrack(1\ 9,12)\rbrack.$$
	The minimal quasi-shuffle has the position vector 
	$$\mathbf{p}=(1,2,4,3,5,7,8,6,1,10,9,11,7,4)$$
	while the maximal quasi-shuffle has the position vector 
	$$ \mathbf{p} = (1,9,11,10,6,8,3,7,1,5,4,2,8,11),$$
	\begin{figure}[h!]
		\centering
		\includegraphics[width=0.85\linewidth]{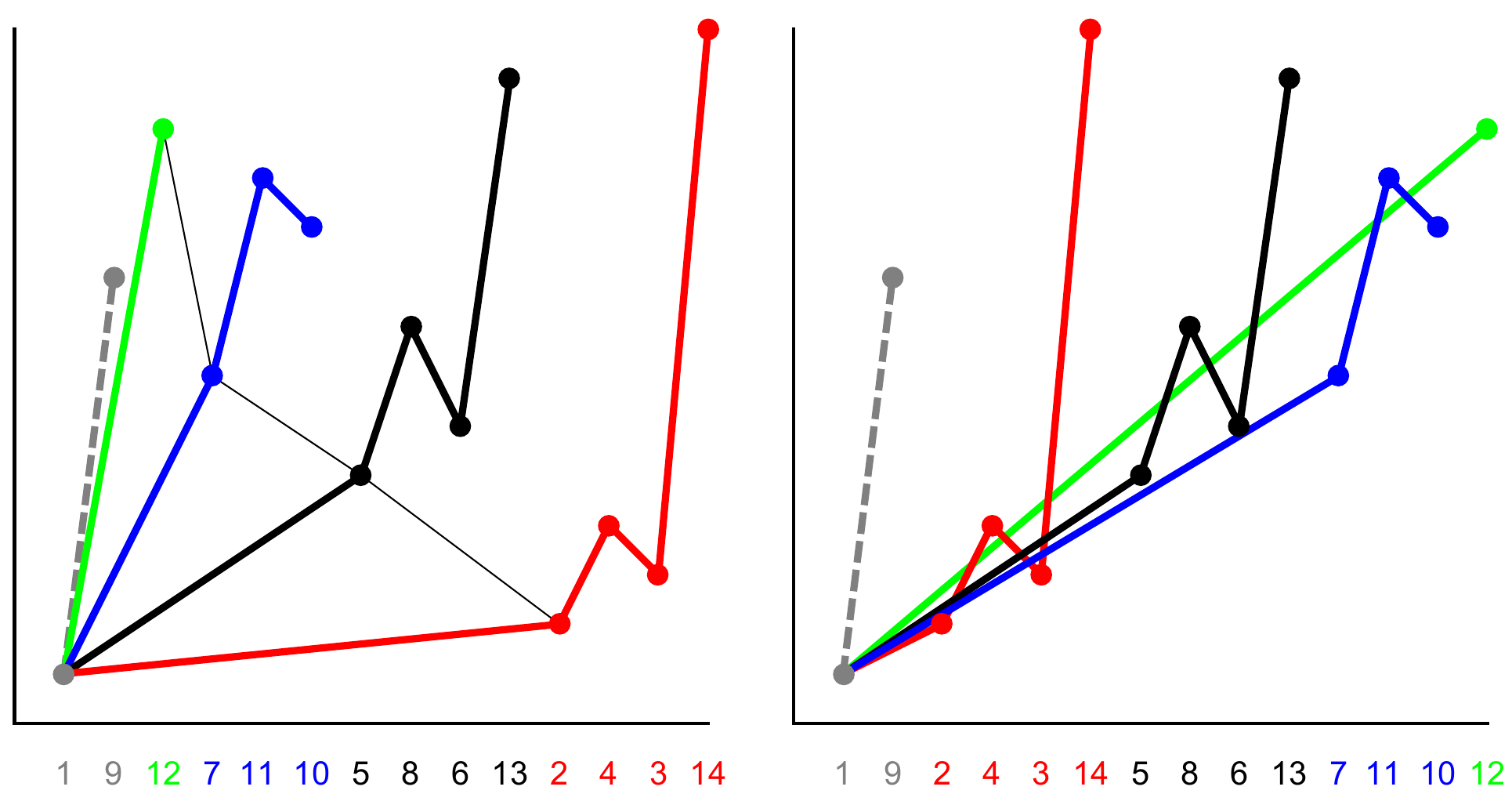}
		\caption{The extremal (quasi-)shuffles of the set obtained from the bijection, $\mathcal{U}_\mathcal{B}(1\ 9,12,7,11,10,5,8,6\ 13,2,4,3\ 14)$. Left, lexicographically maximal; right, lexicographically minimal.  Here the blocks have been flattened with their respective labels ordered increasingly.  Descending minima are connected by a thin line (left).  Best viewed in color.}
		\label{fig:5-7-2019-canonical-basis-permutations}
	\end{figure}
	
\end{example}

	\section{Canonical blades, graduated and characteristic: enumeration and a conjecture}\label{section: canonical basis enumeration}

Recall that 
$$\gamma_{i,j,k} = 1+ \mu_{i,j} + \mu_{j,k} + \mu_{k,i}=1+ (\lbrack\lbrack i,j\rbrack\rbrack-1) + (\lbrack\lbrack j,k\rbrack\rbrack-1) + (\lbrack\lbrack k,i\rbrack\rbrack-1)$$
and that $1_{ab}$ is the characteristic function of the one-dimensional subspace
$$\{t(e_a-e_b):t\in\mathbb{R}\}.$$

Note: in Theorem \ref{thm: canonical basis cyclic sum plate} we established the linear independence of the set of canonical \textit{graduated} blades $\lbrack (S_1,\ldots, S_k)\rbrack$.  In what follows we give the set of \textit{canonical} characteristic functions of blades $\Gamma_{S_1,\ldots, S_k}$.  We conjecture that these are linearly independent and span the same space as the graduated blades; however the proofs of these two assertions is left to future work.

\begin{example}
	We give the standard ordered set partition parametrization of the canonical basis.  Note in what follows that the characteristic function labeled by any set partitions with only singlets is the identity, $\lbrack(j)\rbrack = 1$.
	
	The canonical basis is graded by the dimension of the blade.
		
\begin{center}
	
\begin{tabular}{|c|c|c|c|}
	\hline 
	$\mathbf{S}$ &$\mathcal{U}_\mathcal{B}(\mathbf{S})$ & $\mathcal{U}_\mathcal{B}(\lbrack (\mathbf{S})\rbrack)$ & $\Gamma_\mathbf{S}$\\ 
\hline
(1,3,2) & \{(1,2),(1,3)\} & 1&1\\
\hline 
(1,2,3)& \{(1,2,3)\} & \lbrack (1,2,3)\rbrack &$\Gamma_{1,2,3} = \gamma_{1,2,3}$\\  
(12,3)& \{12,3\} & \lbrack (12,3)\rbrack& $1_{12}$\\  
(1,23)& \{1,23\} & \lbrack (1,23)\rbrack & $1_{23}$\\
(13,2) & \{13,2\} & \lbrack 13,2\rbrack& $ 1_{13}$\\
\hline
(123) & \{(123)\} & \lbrack (123)\rbrack& $1_{123}$\\
\hline
\end{tabular} 	
\end{center}

Above for instance 
$$\mathcal{U}_\mathcal{B}(\lbrack (1,3,2)\rbrack) = \lbrack (1,2,3)\rbrack + \lbrack (1,3,2)\rbrack - \lbrack (12)\rbrack - \lbrack (13)\rbrack - \lbrack (23)\rbrack + \lbrack (123)\rbrack = 1,$$
which is supported on the (zero-dimensional) subspace, the origin itself.

We list the canonical graduated and characteristic functions for $n=4$ below.

\begin{center}
	
	\begin{tabular}{|c|c|c|c|c|}
		\hline 
		$\mathbf{S}$ &$\mathcal{U}_\mathcal{B}(\mathbf{S})$ & $\mathcal{U}_\mathcal{B}(\lbrack (\mathbf{S})\rbrack)$ & $\Gamma_\mathbf{S}$\\ 
		\hline
		(1,4,3,2) & \{(1,2),(1,3),(1,4)\} & 1&1\\
		\hline
		(1,4,2,3) & \{(1,2,3),(1,4)\} & \lbrack (1,2,3)\rbrack&$\Gamma_{1,2,3}$\\
		(1,3,2,4) & \{(1,2,4),(1,3)\} & \lbrack (1,2,4)\rbrack &$\Gamma_{1,2,4}$\\
		(1,3,4,2) & \{(1,3,4),(1,2)\} & \lbrack (1,3,4)\rbrack& $ \Gamma_{1,3,4}$\\
		(12,4,3)  & \{(12,3),(12,4)\} & $\lbrack (12)\rbrack$ & $ 1_{12}$\\
		(13,4,2)  & \{(13,2),(13,4)\} & $\lbrack (13)\rbrack$ & $ 1_{13}$\\
		(14,3,2)  & \{(14,2),(14,3)\} & \lbrack (14)\rbrack & $ 1_{14}$\\
		(1,4,23)  & \{(1,4),(1,23)\} & \lbrack (23)\rbrack & $ 1_{23}$\\
		(1,3,24)  & \{(1,3),(1,24)\} & \lbrack (24)\rbrack & $ 1_{24}$\\
		(1,34,2)  & \{(1,34),(1,2)\} & \lbrack (34)\rbrack & $ 1_{34}$\\
		\hline 
		(1,2,3,4)& \{(1,2,3,4)\} & \lbrack (1,2,3,4)\rbrack &$\Gamma_{1,2,3,4} = \gamma_{1,2,3}\gamma_{1,3,4}$\\  
		(1,2,4,3)& \{(1,2,4,3)\} & \lbrack (1,2,4,3)\rbrack &$\Gamma_{1,2,4,3} = \gamma_{1,2,4}\gamma_{1,4,3}$\\  
		(1,2,34)& \{(1,2,34)\} & \lbrack (1,2,34)\rbrack &$\Gamma_{1,2,34} = \gamma_{1,2,3}1_{34}$\\
		(1,23,4)& \{(1,23,4)\} & \lbrack (1,23,4)\rbrack &$\Gamma_{1,23,4} = \gamma_{1,2,4}1_{23}$\\  
		(12,3,4)& \{(12,3,4)\} & \lbrack (12,3,4)\rbrack &$\Gamma_{12,3,4} = \gamma_{1,3,4}1_{12}$\\
		(1,24,3)& \{(1,24,3)\} & \lbrack (1,24,3)\rbrack &$\Gamma_{1,24,3} = \gamma_{1,2,3}1_{24}$\\
		(13,2,4)& \{(13,2,4)\} & \lbrack (13,2,4)\rbrack &$\Gamma_{13,2,4} = \gamma_{1,2,4}1_{13}$\\   
		(14,2,3)& \{(14,2,3)\} & \lbrack (14,2,3)\rbrack &$\Gamma_{14,2,3} = \gamma_{1,2,3}1_{14}$\\
		(12,34) & \{(12,34)\} & \lbrack (12,34) \rbrack & $1_{12}1_{34}$\\
		(13,24) & \{(13,24)\} & \lbrack (13,24) \rbrack & $1_{13}1_{24}$\\
		(14,23) & \{(14,23)\} & \lbrack (14,23) \rbrack & $1_{14}1_{23}$\\
		(123,4) & \{(123,4)\} & \lbrack (123,4) \rbrack & $1_{123}$\\
		(124,3) & \{(124,3)\} & \lbrack (124,3) \rbrack & $1_{124}$\\
		(134,2) & \{(134,2)\} & \lbrack (134,2) \rbrack & $1_{134}$\\
		(1,234) & \{(1,234)\} & \lbrack (1,234) \rbrack & $1_{234}$\\
		\hline
		(1234) & \{(1234)\} & \lbrack (1234)\rbrack& $1_{1234}$\\
		\hline
	\end{tabular} 	
\end{center}
Here for the expansion we have for instance
\begin{eqnarray*}
	\mathcal{U}_\mathcal{B}(\lbrack 14,3,2\rbrack) & = & (\lbrack 14,2,3\rbrack+ \lbrack 14,3,2\rbrack - \lbrack (124,3)\rbrack - \lbrack (124,3)\rbrack - \lbrack 134,2\rbrack + \lbrack (1234)\rbrack) \\
	& = & 1_{14}(\lbrack 1,2,3\rbrack+ \lbrack 1,3,2\rbrack - \lbrack (12,3)\rbrack - \lbrack (12,3)\rbrack - \lbrack 13,2\rbrack + \lbrack (123)\rbrack),
\end{eqnarray*}
where in the last line we have used the convolution product to factor out the characteristic function of the line $\mathbb{R}(e_1-e_4)$.

	Note that the total count is $1+9+15+1 = 26$, the necklace number for $n=4$, which counts the number of standard ordered set partitions of $\{1,2,3,4\}$.
	
	For $n=5$ the first 2-standard composite ordered set partitions appear.  The new characteristic functions are
	$$\Gamma_{1,2,3}\Gamma_{1,4,5},\ \Gamma_{1,2,4}\Gamma_{1,3,5},\ \Gamma_{1,2,5}\Gamma_{1,3,4},$$
	and the corresponding graduated functions of blades are (this time making the convolution product $\bullet$ explicit)
	\begin{eqnarray*}
			\mathcal{U}_\mathcal{B}(\lbrack (1,4,5,2,3)) & = & \lbrack (1,2,3)\rbrack \bullet \lbrack (1,4,5)\rbrack\\
			\mathcal{U}_\mathcal{B}(\lbrack (1,3,5,2,4)) & = & \lbrack (1,2,4)\rbrack \bullet \lbrack (1,3,5)\rbrack\\
			\mathcal{U}_\mathcal{B}(\lbrack (1,3,4,2,5)) & = & \lbrack (1,2,5)\rbrack \bullet \lbrack (1,3,4)\rbrack.
		\end{eqnarray*}

\end{example}

\begin{rem}
	In Corollary 31 of \cite{EarlyCanonicalBasis} we found for the count of number $T_{n,k}$ of characteristic functions of plates of dimension $n-k$ in the canonical plate basis, the formula
$$T_{n,k}=\sum_{i=1}^n S(n,i)s(i,k-1).$$
Rows $n=1,\ldots, 6$ are given below.
$$
\begin{array}{ccccccc}
1 &  &  &  &  &  &  \\
2 & 1 &  &  &  &  &  \\
6 & 6 & 1 &  &  &  &  \\
26 & 36 & 12 & 1 &  &  &  \\
150 & 250 & 120 & 20 & 1 &  &  \\
1082 & 2040 & 1230 & 300 & 30 & 1 &  \\
\end{array}
$$
\end{rem}

By counting elements in the canonical basis of $\hat{\mathcal{B}}^n$, it is easy to obtain the formula in Proposition \ref{prop: graded dimension blades}, by shifting one of the indices from the formula for the canonical plate basis, reflecting the 2-standardness condition.

\begin{prop}\label{prop: graded dimension blades}
	The number of blades of dimension $(n-k)$ in the canonical basis of graduated blades is
	$$T^B_{n,k}=\sum_{i=1}^n S(n,i)s(i-1,k-1),$$
	where $S(n,i)$ is the Stirling number of the second kind, and $s(i,k)$ is the (unsigned) Stirling number of the first kind. 
\end{prop}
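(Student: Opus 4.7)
The plan is to combine Theorem~\ref{thm: cyclic sum canonical basis}, which identifies the canonical graduated-blade basis with the set of 2-standard composite ordered set partitions of $\{1,\ldots,n\}$, with a standard Stirling-number decomposition, in direct analogy with the corresponding enumeration of the canonical plate basis in \cite{EarlyCanonicalBasis}.

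First, I would unpack the parametrization. By Theorem~\ref{thm: cyclic sum canonical basis}, a canonical basis element of $\hat{\mathcal{B}}^n$ is uniquely encoded by a distinguished subset $T\subseteq\{1,\ldots,n\}$ with $1\in T$ (serving as the common first block of every factor), together with a standard composite ordered set partition $\{\mathbf{S}_1,\ldots,\mathbf{S}_\ell\}$ of the complement $\{1,\ldots,n\}\setminus T$. The grading by dimension $n-k$ in the statement corresponds to fixing the number of pieces, namely $\ell=k-1$: the empty-composite case $T=\{1,\ldots,n\}$, $\ell=0$, gives the trivial basis element in top dimension $n-1$, and each additional piece drops the dimension by one.

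Next I would stratify by the total number $i$ of set-theoretic blocks appearing, i.e.\ the block $T$ together with all blocks of all the $\mathbf{S}_j$. For fixed $i$, there are two independent choices: (a) select an unordered set partition of $\{1,\ldots,n\}$ into $i$ nonempty blocks (contributing $S(n,i)$, with $T$ then forced to be the unique part containing $1$); (b) distribute the remaining $i-1$ blocks among $\ell=k-1$ pieces and equip each piece with a standard ordering. Since a standard ordered set partition of a finite set $X$ is the same datum as a cyclic arrangement of the underlying unordered partition (rotate so that the block containing $\min X$ comes first), step (b) is counted by the unsigned Stirling number of the first kind $s(i-1,k-1)$. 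Multiplying the two counts and summing over $i$ yields $T^B_{n,k}=\sum_{i=1}^{n}S(n,i)s(i-1,k-1)$.

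The only real obstacle is bookkeeping: one must verify that the grading by ``dimension $n-k$'' used in Proposition~\ref{prop: graded dimension blades} corresponds cleanly to $k=\ell+1$ (in particular, that the empty composite lands at $k=1$). Once this is pinned down, the shift relative to the analogous plate formula -- replacing $(i,k)$ by $(i-1,k-1)$ in the Stirling factor while keeping $S(n,i)$ intact -- is precisely the numerical effect of the 2-standardness condition, which reserves one set-partition block (for $T$) and one piece-slot (for the shared first factor) compared to the plate case.
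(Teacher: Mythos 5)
Your argument is correct and matches the approach the paper sketches: parametrize the canonical graduated-blade basis by 2-standard composite ordered set partitions (via Theorem~\ref{thm: cyclic sum canonical basis}), stratify by the total number $i$ of set-partition blocks (giving the $S(n,i)$ factor with $T$ forced to be the block containing $1$), and then count standard composite orderings of the remaining $i-1$ blocks into $\ell=k-1$ pieces via the cycle interpretation of the unsigned Stirling number $s(i-1,k-1)$; the paper's ``proof'' is just the one-line remark that this follows from the plate count by a shift reflecting 2-standardness, so you have supplied the detail it leaves implicit. One tiny imprecision: the shift from the plate formula is $s(i,k-1)\to s(i-1,k-1)$ (only the first argument moves), not ``replacing $(i,k)$ by $(i-1,k-1)$'' -- the second Stirling index is already $k-1$ in both cases -- but this does not affect the argument.
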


For $n=2,3,\ldots, 7$ we have
$$\begin{array}{cccccc}
1 & \text{} & \text{} & \text{} & \text{} & \text{} \\
1 & 1 & \text{} & \text{} & \text{} & \text{} \\
1 & 4 & 1 & \text{} & \text{} & \text{} \\
1 & 15 & 9 & 1 & \text{} & \text{} \\
1 & 66 & 66 & 16 & 1 & \text{} \\
1 & 365 & 500 & 190 & 25 & 1 \\
\end{array}$$
Note that the rows sum (correctly) to the necklace numbers, which count ordered set partitions up to cyclic block rotation.

	The numbers that are easy to remember are as follows: the left column counts the occurrence of the whole space $1_{12\cdots n}$.  For the rightmost two diagonals, there is always the characteristic function of the unique point at the origin, and there are $(n-1)^2$ linearly-independent characteristic functions of blades each of dimension 1, corresponding to the $\binom{n-1}{2}$ tripod generators $\gamma_{ijk}$ as in Figure \ref{fig:blade3coordinates0} together with the $\binom{n}{2}$ characteristic functions of the one-dimensional subspaces, $1_{ab}$.

The generating functions for the first few diagonals of the triangle in Proposition \ref{prop: graded dimension blades} are
$$\frac{1}{1-x},\frac{x+1}{(1-x)^3},\frac{x^2+10 x+1}{(1-x)^5},\frac{x^3+59 x^2+59 x+1}{(1-x)^7},\frac{x^4+356 x^3+966 x^2+356 x+1}{(1-x)^9},$$
$$\frac{x^5+2517 x^4+12602 x^3+12602 x^2+2517 x+1}{(1-x)^{11}},$$
$$\frac{x^6+21246 x^5+161967 x^4+298852 x^3+161967 x^2+21246 x+1}{(1-x)^{13}},$$
and the coefficients of the numerators sum to the sequence 
$$1, 2, 12, 120, 1680, 30240,$$
which appears to be given by O.E.I.S. sequence A001813, $a(m) = \frac{(2m)!}{m!}$, \cite{oeis}.  Clearly, a combinatorial proof and explanation for this formula would be highly desirable.

A vector of real numbers $(b_1,\ldots, b_m)$ is symmetric if $b_i = b_{m+1-i}$.  It is unimodal if it there exists an index $i$ such that $b_1\le b_2\le\cdots\le b_i\ge b_{i+1} \ge \cdots \ge b_m$.

The following conjecture has been verified numerically in Mathematica through the 24th diagonal.

\begin{conjecture}\label{conjecture: higher codim blades generating function}
	The coefficients of the polynomial numerators of the generating functions for the diagonals of the array $T^B_{n,k}$ are 
	\begin{enumerate}
		\item symmetric,
		\item unimodal, and 
		\item they sum to the sequence $a(m) = \frac{(2m)!}{m!}$.
	\end{enumerate}
\end{conjecture}

\begin{rem}
	After formulating this conjecture, Donghyun Kim shared with us his proofs of (1) and (3); interestingly, the formulas involve the Eulerian numbers of the second kind, O.E.I.S. A008517.  See Appendix \ref{sec: Donghyun Kim proof}.
\end{rem}

The sequence $a(m) = \frac{(2m)!}{m!}$ itself is already suggestive; for example, it can expressed as $a(m)=(m+1)! \left(\frac{(2m)!}{(m+1)!(m!)}\right)$, which is $(m+1)!$ times the Catalan number $C_m$, suggesting the possibility of a relation to \textit{labeled} (rooted) binary trees (as noted in the comments in sequence A001813), with some additional statistic explaining the numerator coefficients for the generating series.  It seems plausible to ask about the possibility of a directly geometric and/or ring-theoretic interpretation of the sequence $a(m)$ and the numerator coefficients in terms of blades or other geometric or physical objects.  We leave this question to future research.

	\section{The graded cohomology ring of a configuration space}\label{sec: graded cohomology ring}

In what follows, we connect with joint work with V. Reiner in \cite{EarlyReiner} on the cohomology ring $H^\star (X_n)$, where $X_n$ is the configuration space of $n$ distinct points in $SU(2)$ modulo the diagonal action of $SU(2)$.  The presentation of the cohomology ring in Definition \ref{defn: cohomology ring SU2} suggests that this ring is analogous to a graded analog of the space of characteristic functions of blades labeled by nondegenerate ordered set partitions of $\{1,\ldots, n\}$, having $n$ singleton blocks.  We leave the detailed exploration of the connection to future work.

\begin{defn}\label{defn: cohomology ring SU2}
	Let $\mathcal{U}^n$ be the commutative algebra on $\binom{n}{2}$ generators $u_{ij}  =-u_{ji}$ with $i\not=j$, subject to the relations
	$$u_{ij}^2=0$$
	and
	$$u_{ij}u_{jk}+u_{jk}u_{ki}+u_{ki}u_{ij}=0.$$
\end{defn}

Define $v_{ijk} = u_{ij} + u_{jk} + u_{ki}$, and denote by $\mathcal{V}^n$ the subalgebra of $\mathcal{U}^n$ generated by the $v_{ijk}$.

\begin{thm}[\cite{EarlyReiner}, Theorem 3]
	We have an isomorphism of graded rings 
	$$\mathcal{V}^n\simeq H^\star(X_n).$$
\end{thm}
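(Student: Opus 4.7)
The plan is to first identify $X_n$, up to homotopy, with an ordered configuration space in $\RR^3$, then to invoke the classical Arnold/F.~Cohen presentation of such a configuration space, and finally to write down the isomorphism by hand using an explicit retraction. More precisely, the free diagonal left action of $SU(2)$ on $\Pconf_n(SU(2))$ admits the global slice $\{g_1=e\}$, so $X_n$ is identified with $\Pconf_{n-1}(SU(2)\setminus\{e\}) \simeq \Pconf_{n-1}(\RR^3)$. By F.~Cohen's theorem on the cohomology of ordered configuration spaces in $\RR^d$, $H^\star(X_n)$ is then presented as the graded commutative algebra on degree-$2$ classes $w_{ij}$ (for $2\le i<j\le n$) modulo $w_{ij}^2=0$ and the Jacobi relation $w_{ij}w_{jk}+w_{jk}w_{ki}+w_{ki}w_{ij}=0$. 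Up to relabeling of indices, this is precisely a copy of the algebra $\mathcal{U}^{n-1}$ of Definition~\ref{defn: cohomology ring SU2}.

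Next, I would define the algebra map $\Psi\colon H^\star(X_n)\to \mathcal{V}^n$ by $w_{ij}\mapsto v_{1ij}$, and verify that the two defining relations survive. The first, $v_{1ij}^2=0$, is a special case of the universal identity
\[
v_{abc}^2 \;=\; 2\bigl(u_{ab}u_{bc}+u_{bc}u_{ca}+u_{ca}u_{ab}\bigr) \;=\; 0,
\]
which holds by the defining Arnold relation of $\mathcal{U}^n$. The second, the Jacobi relation among the $v_{1ij}$'s, rests on the following identity, which drops out by direct expansion using $u_{ji}=-u_{ij}$:
\[
v_{1ij}+v_{1jk}+v_{1ki}\;=\;v_{ijk}.
\]
Squaring this identity and using that both $v_{1\bullet\bullet}^2=0$ (for each pair) and $v_{ijk}^2=0$ yields $2(v_{1ij}v_{1jk}+v_{1jk}v_{1ki}+v_{1ki}v_{1ij})=0$, as required. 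The same identity also makes $\Psi$ surjective: every generator $v_{ijk}$ of $\mathcal{V}^n$ is either already of the form $v_{1bc}$, which lies in the image by definition, or else has $1\notin\{i,j,k\}$, in which case it is a signed sum of three $v_{1\bullet\bullet}$'s.

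For injectivity I would construct an explicit left inverse. Define the ring map $\pi\colon\mathcal{U}^n\to H^\star(X_n)$ by $u_{1j}\mapsto 0$ for $j\ge 2$ and $u_{ij}\mapsto w_{ij}$ for $2\le i<j\le n$; well-definedness is transparent, since each Arnold relation involving the index $1$ has two of its three terms in the kernel of $\pi$, and the Arnold relations among $\{2,\ldots,n\}$ map to the defining Arnold relations on the $w_{ij}$. Now $\pi(v_{1ij})=\pi(u_{1i}+u_{ij}-u_{1j})=w_{ij}$, so $\pi\circ\Psi$ is the identity on $H^\star(X_n)$, forcing $\Psi$ to be injective and hence an isomorphism. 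The real content of the argument is the little combinatorial identity $v_{1ij}+v_{1jk}+v_{1ki}=v_{ijk}$: once it is in hand, the Jacobi relation for the $v_{1ij}$'s is a one-line consequence and the retraction $\pi$ upgrades surjectivity to an isomorphism with no appeal to any dimension count.
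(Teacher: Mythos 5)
The paper does not give its own proof of this theorem; it is cited to \cite{EarlyReiner} and only its consequences are used. So the only thing to evaluate is whether your argument stands on its own, and it does.

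Your reduction of $X_n$ to $\Pconf_{n-1}(\RR^3)$ via the slice $\{g_1 = e\}$ is correct (the diagonal left action is free, $SU(2)\setminus\{e\}\cong\RR^3$), and F.~Cohen's presentation then identifies $H^\star(X_n)$ with $\mathcal{U}^{n-1}$: degree-$(d-1)=2$ generators $w_{ij}=-w_{ji}$, squares vanish, three-term Arnold relation. The identity $v_{1ij}+v_{1jk}+v_{1ki}=v_{ijk}$ is correct (the six $u_{1\bullet}$-terms cancel in pairs), and squaring it against $v_{abc}^2=0$ does give $2(v_{1ij}v_{1jk}+v_{1jk}v_{1ki}+v_{1ki}v_{1ij})=0$, which over $\QQ$ or $\CC$ is exactly the image of the Arnold relation; so $\Psi$ is a well-defined ring map, and the same identity makes it surjective onto $\mathcal{V}^n$. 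Your retraction $\pi:\mathcal{U}^n\to H^\star(X_n)$ killing $u_{1j}$ and fixing the rest is well-defined for precisely the reason you give (every Arnold relation containing the index $1$ has two of its three terms killed termwise), and $\pi\circ\Psi=\mathrm{id}$ upgrades surjectivity to an isomorphism with no dimension count.

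One small point worth flagging: the statement claims an isomorphism of \emph{graded} rings, and your $\Psi$ sends a degree-$2$ cohomology class $w_{ij}$ to a degree-$1$ element $v_{1ij}$ of $\mathcal{V}^n$; this is the standard halving-of-degree convention for Orlik–Solomon-type presentations of configuration spaces in $\RR^3$ and should be mentioned explicitly. Beyond that, your route is notably different in spirit from what the paper itself gestures at: the ``Problem'' at the end of Section \ref{sec: graded cohomology ring} suggests proving that $\mathcal{U}^{n-1}$ and $\mathcal{V}^n$ have bases indexed by permutations with a prescribed number of cycles (possibly via an nbc-basis argument, cf.\ \cite{OrlikTerao}), which would pin down the isomorphism by a dimension count. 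Your explicit left inverse $\pi$ bypasses all of that, which is cleaner; on the other hand, the basis route would have the side benefit of producing the explicit monomial bases requested in that Problem, which your argument does not yield.
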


In the algebra $\mathcal{V}^n$ we have the very useful relation of Proposition \ref{prop:triplesRelation}.  It is interesting to compare its relative simplicity to the complexity of the proof of Theorem \ref{thm: blade flag factorization}.

\begin{prop}\label{prop:triplesRelation}
	In $\mathcal{U}^n$ we have the relation 
	$$(1+u_{i_1,i_2})\cdots(1+u_{i_k,i_1})
	= (1+v_{i_1,i_2,i_3})\cdots (1+v_{i_1,i_{k-1},i_{k}}),$$
	for any sequence $(i_1,\ldots, i_k)$ selected from $\{1,\ldots, n\}$ with $k\ge 2$ ($k=2$ being trivial),
	and additionally the identities
	\begin{eqnarray*}
		v_{ijk} & = & v_{jki}=-v_{ikj}\\
		v_{ijk}^2 & = & 0.
	\end{eqnarray*}
\end{prop}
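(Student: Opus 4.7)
The plan is to dispatch the two auxiliary identities first, then prove the main factorization identity by induction on $k$ using a simple telescoping insertion.

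The auxiliary identities are essentially formal. Cyclic symmetry $v_{ijk}=v_{jki}$ is immediate from the definition $v_{ijk}=u_{ij}+u_{jk}+u_{ki}$, while $v_{ikj}=-v_{ijk}$ follows from $u_{ab}=-u_{ba}$. For $v_{ijk}^{2}=0$, I would expand using commutativity to get
$$v_{ijk}^{2}=u_{ij}^{2}+u_{jk}^{2}+u_{ki}^{2}+2\bigl(u_{ij}u_{jk}+u_{jk}u_{ki}+u_{ki}u_{ij}\bigr),$$
and both groups vanish, the first by $u_{ab}^{2}=0$ and the second by the defining Arnold-type relation.

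For the main identity I would induct on $k$. The base $k=2$ is immediate: $(1+u_{i_1,i_2})(1+u_{i_2,i_1})=1-u_{i_1,i_2}^{2}=1$ matches the empty right-hand product. The base $k=3$ is a direct expansion of $(1+u_{i_1,i_2})(1+u_{i_2,i_3})(1+u_{i_3,i_1})$: the degree-1 part is exactly $v_{i_1,i_2,i_3}$, the degree-2 part is $u_{i_1,i_2}u_{i_2,i_3}+u_{i_2,i_3}u_{i_3,i_1}+u_{i_3,i_1}u_{i_1,i_2}$ (after using commutativity to cycle), which vanishes by the defining relation, and the degree-3 term reduces to $0$ by rewriting one factor via the Arnold relation and applying $u_{ab}^{2}=0$. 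This gives $(1+u_{i_1,i_2})(1+u_{i_2,i_3})(1+u_{i_3,i_1})=1+v_{i_1,i_2,i_3}$.

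For the inductive step, assuming the identity for $k-1$, I would insert the trivial factor $(1+u_{i_{k-1},i_1})(1+u_{i_1,i_{k-1}})=1$ (which holds since $u_{i_1,i_{k-1}}=-u_{i_{k-1},i_1}$ and $u^{2}=0$) between positions $k-1$ and $k$ of the left-hand side. Using commutativity to regroup, this rewrites the left-hand side as
$$\bigl[(1+u_{i_1,i_2})\cdots(1+u_{i_{k-2},i_{k-1}})(1+u_{i_{k-1},i_1})\bigr]\cdot\bigl[(1+u_{i_1,i_{k-1}})(1+u_{i_{k-1},i_k})(1+u_{i_k,i_1})\bigr].$$
The first bracket is the left-hand side for the shorter sequence $(i_1,\ldots,i_{k-1})$, so the inductive hypothesis replaces it with $(1+v_{i_1,i_2,i_3})\cdots(1+v_{i_1,i_{k-2},i_{k-1}})$. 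The second bracket is the $k=3$ base case applied to the triple $(i_1,i_{k-1},i_k)$, which equals $1+v_{i_1,i_{k-1},i_k}$. Multiplying these gives the desired right-hand side.

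The main obstacle is organizational rather than conceptual: one must be careful that commutativity justifies both inserting the trivial factor at a non-adjacent position and applying the $k=3$ identity to the non-consecutive triple $(i_1,i_{k-1},i_k)$. Once these uses are made explicit, the induction telescopes cleanly; there is no hidden arithmetic beyond the two base identities $u_{ab}^{2}=0$ and $u_{ij}u_{jk}+u_{jk}u_{ki}+u_{ki}u_{ij}=0$.
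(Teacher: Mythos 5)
Your proof is correct and follows essentially the same route as the paper: inserting the trivial factor $(1+u_{i_{k-1},i_1})(1+u_{i_1,i_{k-1}})=1$, regrouping by commutativity, applying the inductive hypothesis to the shortened cycle, and collapsing the remaining three factors to $1+v_{i_1,i_{k-1},i_k}$ via $u_{ab}^2=0$ and the Arnold relation. Your version is slightly cleaner in that it isolates the $k=3$ case as a lemma rather than re-deriving the three-factor collapse inline, but the underlying computation is identical.
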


\begin{proof}
	As $u_{ij}^2=0$ and 
	$$u_{ij}u_{jk}+u_{jk}u_{ki}+u_{ki}u_{ij}=0,$$
we have
	$$v_{ijk}^2 = u_{ij}^2+u_{jk}^2 + u_{ki}^2 +2\left(u_{ij}u_{jk}+u_{jk}u_{ki}+u_{ki}u_{ij}\right)=0.$$
	Now let us prove the first assertion by induction. 
	
	Since $u_{i,j}^2 =0$ we have 
	$$(1+u_{ij})(1+u_{ji}) = (1-u_{ij}^2)=1.$$
	
	Then,
	\begin{eqnarray*}
		(1+u_{i_1i_2})\cdots (1+u_{i_ki_1}) & = & (1+u_{i_1i_2})(1+u_{i_2i_3})\cdots (1+u_{i_{k-1}i_1})(1+u_{i_{k-1}i_k})(1-u_{i_{k-1}i_1})(1+u_{i_{k}i_1})\\
		& = & (1+u_{i_1i_2})(1+u_{i_2i_3})\cdots (1+u_{i_{k-1}i_1})(1+u_{i_1i_{k-1}}+u_{i_{k-1}i_k}+u_{i_ki_{1}})\\
		& = & (1+u_{i_1i_2})(1+u_{i_2i_3})\cdots (1+u_{i_{k-1}i_1})(1+v_{1,k-1,k})\\
		& = & (1+v_{i_1i_2i_3})(1+v_{i_1i_3i_4})\cdots (1+v_{i_1i_{k-1}i_k}).
	\end{eqnarray*}
	Here we have used the induction step, $u_{ij}^2=0$ and the Jacobi identity to express 
	\begin{eqnarray*}
		(1+u_{i_{k-1}i_k})(1-u_{i_{k-1}i_1})(1-u_{i_{k}i_1})& = & (1+u_{1i_{k-1}})(1+u_{i_{k-1}i_k})(1+u_{i_{k}i_1})\\
		& = & (1+u_{i_{1}i_{k-1}}+u_{i_{k-1}i_k}+u_{i_ki_1})\\
		& = & 1+v_{1i_{k-1}i_k}.
	\end{eqnarray*}
\end{proof}
Note that the identity is unchanged if we deform with a formal parameter $t$ to keep track of degree: replace $(1+u_{ij}) \mapsto (1+tu_{ij})$ and accordingly $(1+v_{ijk})\mapsto (1+tv_{ijk})$.

Then Proposition \ref{prop:triplesRelation} becomes
\begin{eqnarray*}
	(1+tu_{i_1i_2})(1+tu_{i_2i_3})\cdots (1+tu_{i_ki_1}) & = & (1+tv_{i_1i_2i_3})(1+tv_{i_1i_3i_4})\cdots (1+tv_{i_1i_{k-1}i_k}).
\end{eqnarray*}

\begin{cor}\label{cor:cyclicSumProdVanish}
	We have
	$$u_{i_1,i_2}\cdots u_{i_{k-1},i_k} + u_{i_2,i_3}\cdots u_{i_{k},i_1} + \cdots +u_{i_{k},i_1}\cdots u_{i_{k-2},i_{k-1}}=0$$
	and
	$$u_{i_1,i_2}\cdots u_{i_k,i_1}=0,$$
	for any sequence $\{i_1,\ldots, i_k\}$ of elements in $\{1,\ldots, n\}$.
\end{cor}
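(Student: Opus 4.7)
The plan is to extract the coefficients of $t^{k-1}$ and $t^k$ from the $t$-deformed identity written out in the paragraph following Proposition \ref{prop:triplesRelation}, namely
$$(1+tu_{i_1i_2})(1+tu_{i_2i_3})\cdots(1+tu_{i_ki_1}) = (1+tv_{i_1i_2i_3})(1+tv_{i_1i_3i_4})\cdots(1+tv_{i_1i_{k-1}i_k}).$$
The right-hand side is a product of $k-2$ binomials in $t$, hence a polynomial in $t$ of degree at most $k-2$, and therefore its coefficients at $t^{k-1}$ and at $t^k$ both vanish identically in $\mathcal{U}^n$.

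Next I would expand the left-hand side as
$$\sum_{S\subseteq\{1,\ldots,k\}} t^{|S|} \prod_{j\in S}^{\rightarrow} u_{i_j,i_{j+1}},$$
where the product in each term is taken in the cyclic order inherited from $\{1,\ldots,k\}$ with the convention $i_{k+1}=i_1$. The coefficient of $t^k$ corresponds to the unique subset $S=\{1,\ldots,k\}$ and equals the full cyclic product $u_{i_1,i_2}u_{i_2,i_3}\cdots u_{i_k,i_1}$; setting it equal to $0$ yields the second identity of the corollary. The coefficient of $t^{k-1}$ corresponds to the $k$ complements of singletons, so it equals the sum over $j\in\{1,\ldots,k\}$ of the $(k-1)$-fold products $u_{i_1,i_2}\cdots \widehat{u_{i_j,i_{j+1}}} \cdots u_{i_k,i_1}$ (the hat indicating omission of that factor); setting this to $0$ gives exactly the cyclic sum claimed in the first identity.

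Both statements thus fall out simultaneously as a degree comparison in $t$, with essentially no obstacle beyond the bookkeeping observation that the right-hand side of the deformed version of Proposition \ref{prop:triplesRelation} has $t$-degree strictly less than $k-1$. If one prefers to avoid working in $\mathcal{U}^n[t]$, the same argument can be run inside the natural grading on $\mathcal{U}^n$ by noting that the defining relations $u_{ij}^2=0$ and $u_{ij}u_{jk}+u_{jk}u_{ki}+u_{ki}u_{ij}=0$ are homogeneous of degree two, so the identity of Proposition \ref{prop:triplesRelation} decomposes termwise by total degree; the degree $k-1$ and degree $k$ pieces of the right-hand side are zero for the same reason.
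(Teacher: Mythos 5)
Your proof is correct and is essentially identical to the paper's own argument: both invoke the $t$-deformed form of Proposition \ref{prop:triplesRelation}, observe that the right-hand side has $t$-degree at most $k-2$, and read off the vanishing of the $t^{k-1}$ and $t^k$ coefficients on the left as the two claimed identities. The only difference is that you spell out the coefficient extraction a bit more explicitly than the paper does, but the idea and the route are the same.
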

\begin{proof}
	We need to check that the coefficients of $t^k$ and $t^{k-1}$ in the following expression are zero:
	\begin{eqnarray*}
		(1+tu_{i_1i_2})(1+tu_{i_2i_3})\cdots (1+tu_{i_ki_1}).
	\end{eqnarray*}
	But by Proposition \ref{prop:triplesRelation} this equals
	\begin{eqnarray*}
		(1+tv_{i_1i_2i_3})(1+tv_{i_1i_3i_4})\cdots (1+tv_{i_1i_{k-1}i_k})
	\end{eqnarray*}
	and the highest power of $t$ which appears with nonzero coefficient is $t^{k-2}$.
\end{proof}

It is interesting to compare the elegance of exponentiation in Example \ref{example: Vn calculation n=4} with the relative complexity of the same formally equivalent identity in Proposition \ref{prop: triangulation independence complete blade}:
$$(\gamma_{1,2,3} \gamma_{1,3,4} = \gamma_{1,2,4}\gamma_{2,3,4})\ \ \Leftrightarrow (v_{123}v_{134} = v_{124} v_{234}).$$

\begin{example}\label{example: Vn calculation n=4}
	In $\mathcal{U}^n$ we have the relations
	$$\exp\left(v_{123}\right)\exp\left(v_{134}\right)=\exp\left(v_{123}+v_{134}\right)=\exp\left(u_{12} + u_{23} + u_{34} + u_{41}\right),$$
	hence the full exponential is an invariant of the boundary 1-skeleton of the polygon!  See also Appendix \ref{sec:Leading singularity} for a more general construction involving triangulations of polygons.  Continuing,
\begin{eqnarray*}
	&  & \exp\left(u_{12} + u_{23} + u_{34} + u_{41}\right)\\
	& = & 1+\left(u_{12} + u_{23} + u_{34} + u_{41}\right) + \frac{1}{2}\left(u_{12} + u_{23} + u_{34} + u_{41}\right)^2 + \frac{1}{3!}\left(u_{12} + u_{23} + u_{34} + u_{41}\right)^3+\cdots \\
	& = & 1+\left(u_{12} + u_{23} + u_{34} + u_{41}\right) + \left(u_{12}u_{23} + u_{12} u_{34} + u_{12} u_{41} + u_{23} u_{34} + u_{23} u_{41} + u_{34} u_{41}\right),
\end{eqnarray*}
where the degree 3 term vanishes by direct computation, or by Proposition \ref{prop:triplesRelation}.  The sum is in termwise bijection with the expression for $\Gamma_{i,j,k,\ell}$ from Proposition \ref{prop: triangulation independence complete blade} and in particular the expression for $\Gamma_{1,2,3,4}$ as a sum of elementary symmetric functions, as depicted in Figure \ref{fig:bladesstickframed3a}. 
\end{example}

By way of a specialization of the canonical basis from Theorem \ref{thm: cyclic sum canonical basis} to ordered set partitions having only singleton blocks, one would hope to have the following graded basis for the cohomology ring of the configuration space of points in $SU(2)$ modulo the diagonal action, from \cite{EarlyReiner}.  Indeed, using the so-called \textit{nbc} basis \cite{OrlikTerao}, in \cite{EarlyReiner} it was shown that one has the following analog of the canonical basis for graduated functions of blades.

\begin{prop}[\cite{EarlyReiner}]\label{prop:canonical Basis algebra}
		Writing
	each cycle $C$ of $w$ uniquely as 
	$C=(c_1 c_2  \cdots c_\ell)$ with convention $c_1=\min\{ c_1, c_2,\ldots, c_\ell\}$,
	then $(\mathcal{U}^{n-1})_j$ and $(\mathcal{V}^n)_j$ have the bases respectively
	$$
	\prod_{\text{cycles }C\text{ of }w} 
	u_{ c_1, c_2} \,\, u_{c_2, c_3} \cdots u_{c_{\ell-1} c_\ell}
	\qquad \text{ and }
	\prod_{\text{cycles }C\text{ of }w} 
	v_{ c_1, c_2, n} \,\, v_{c_2, c_3,n} \cdots v_{c_{\ell-1} c_\ell,n},
	$$
	where $w$ runs through all permutations in $\symm_{n-1}$ with $n-1-j$ cycles.
\end{prop}

\section{Configuration space of points on the circle}\label{sec: configuration space circle}

Let us denote by 
$$\mathcal{O}^n = U(1)^n\slash U(1)$$
the configuration space of $n$ points on the unit circle $U(1) = \{z\in \mathbb{C}: \vert z\vert=1\}$, modulo simultaneous rotation.  This could naturally be considered to be embedded on the diagonal in $U(n)$.  In this section we record a convenient labeling and parameterization of the components of $\mathcal{O}^n$.  We give parametrizations of the closed components of $\mathcal{O}^n$, leaving further exploration to future work.

Denote by $\Delta_1^n = \{x\in \lbrack 0,1\rbrack^n: \sum_{i=1}^n x_i=1\}$ the unit simplex.

\begin{defn}
	Let $\mathbf{S}=(S_1,\ldots, S_k)$ be an ordered set partition of $\{1,\ldots, n\}$.  For $x\in \Delta_1^n$, define
	\begin{eqnarray*}
		\varphi_{\mathbf{S}}(x_1,\ldots, x_n) & = & e^{2\pi i x_{S_1\cdots S_k}}e_{S_1} + e^{2\pi i x_{S_2\cdots S_k}}e_{S_2} + \cdots + e^{2\pi i x_{S_k}}e_{S_k}\\
		& = & e_{S_1} + e^{2\pi i x_{S_2\cdots S_k}}e_{S_2} + \cdots + e^{2\pi i x_{S_k}}e_{S_k},
	\end{eqnarray*}
	and denote the equivalence class of $\varphi_{\mathbf{S}}(x_1,\ldots, x_n)$ modulo simultaneous rotation by $U(1)$ by $\overline{\varphi}_{\mathbf{S}}(x_1,\ldots, x_n)$.
	We further define $\lbrack S_1,\ldots, S_k\rbrack^\circledast=\overline{\varphi}_{\mathbf{S}}(\Delta_1^n)$, and as usual denote by $\lbrack \lbrack S_1,\ldots, S_k\rbrack\rbrack^\circledast$ the characteristic function of $\lbrack S_1,\ldots, S_k\rbrack^\circledast$.

\end{defn}

Then it is easy to see that the image $\varphi_{\mathbf{S}}(\Delta_1^n)\subset \mathcal{O}^n$ fills out the closure of the unique cyclic order of points on the circle, including all possible additional collisions, which is oriented counterclockwise (i.e. with increasing angle) as 
$$x_{(S_1)}\leftarrow x_{(S_2)}\leftarrow\cdots\leftarrow x_{(S_k)}\leftarrow x_{(S_1)},$$
where we recall the shorthand notation $x_{(S)}$ which stands for $(x_{i_1}=\cdots =x_{i_s})$ for $S=\{i_1,\ldots, i_s\}$.

In Proposition \ref{prop: linear orders glued2} we show that the set of composite maps $\overline{\varphi}_\mathcal{S}:\Delta_1^n\rightarrow \mathcal{O}^n$ identifies all subsets of $\mathcal{O}^n$ which are labeled by cyclic rotations of the same ordered set partition.  
\begin{prop}\label{prop: linear orders glued2}
	For any ordered set partition $(S_1,\ldots, S_k)$ of $\{1,\ldots, n\}$ we have invariance under cyclic block rotation:
	$$\lbrack S_1,S_2,\ldots, S_k\rbrack^\circledast = \lbrack S_2,S_3,\ldots, S_k,S_1\rbrack^\circledast.$$
\end{prop}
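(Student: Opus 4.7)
The plan is to prove the one-step cyclic invariance directly by exhibiting, for each $x\in\Delta_1^n$, an element of $U(1)$ that rotates $\varphi_{\mathbf{S}}(x)$ to $\varphi_{(S_2,\ldots,S_k,S_1)}(x)$. The key identity will be
$$\varphi_{(S_2,\ldots,S_k,S_1)}(x) = e^{2\pi i\, x_{S_1}}\,\varphi_{(S_1,S_2,\ldots,S_k)}(x),$$
which shows that the two maps descend to the same map into $\mathcal{O}^n = U(1)^n/U(1)$, hence have the same image.

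To verify this identity, I would first expand the right-hand side. By definition
$$\varphi_{(S_1,\ldots,S_k)}(x) = e_{S_1} + \sum_{j=2}^{k} e^{2\pi i\, x_{S_j\cdots S_k}}\, e_{S_j},$$
using that $x_{S_1\cdots S_k}=1$ for $x\in\Delta_1^n$ so the leading exponential is trivial. Multiplying through by $e^{2\pi i x_{S_1}}$ gives
$$e^{2\pi i\, x_{S_1}}\,\varphi_{(S_1,\ldots,S_k)}(x) = e^{2\pi i\, x_{S_1}}\, e_{S_1} + \sum_{j=2}^{k} e^{2\pi i(x_{S_j\cdots S_k}+x_{S_1})}\, e_{S_j}.$$
On the other hand, the defining expansion of $\varphi_{(S_2,\ldots,S_k,S_1)}(x)$ has $e^{2\pi i\, x_{S_1}} e_{S_1}$ as the trailing summand, and for $j=2,\ldots,k$ the summand $e^{2\pi i\, x_{S_j\cdots S_k S_1}}\, e_{S_j}$. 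Since $S_1$ is disjoint from $S_j\cup\cdots\cup S_k$, we have $x_{S_j\cdots S_k S_1}=x_{S_j\cdots S_k}+x_{S_1}$, matching term-by-term.

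Consequently $\overline{\varphi}_{(S_1,\ldots,S_k)}(x)=\overline{\varphi}_{(S_2,\ldots,S_k,S_1)}(x)$ for every $x\in\Delta_1^n$, and taking images over $\Delta_1^n$ yields
$$\lbrack S_1,\ldots,S_k\rbrack^\circledast = \lbrack S_2,\ldots,S_k,S_1\rbrack^\circledast.$$
Iterating this single-step shift then gives invariance under all cyclic block rotations. Since the entire argument reduces to the disjointness identity $x_{S_j\cdots S_k S_1}=x_{S_j\cdots S_k}+x_{S_1}$ and to the fact that $x_{S_1\cdots S_k}=1$ on $\Delta_1^n$, there is really no obstacle; the only point that requires any care is checking the conventions in the definition of $\varphi_{\mathbf{S}}$ (namely, that the leading exponential is always trivial), which makes the index-shifted formula align cleanly with a global $U(1)$-rotation by $e^{2\pi i x_{S_1}}$.
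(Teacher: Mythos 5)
Your proof is correct and takes essentially the same route as the paper: in both cases one shows that $\varphi_{(S_1,\ldots,S_k)}(x)$ and $\varphi_{(S_2,\ldots,S_k,S_1)}(x)$ differ pointwise by a global phase, using disjointness to split the exponents. The only cosmetic difference is the direction of the phase: you multiply by $e^{2\pi i x_{S_1}}$, whereas the paper factors out $e^{2\pi i x_{S_2\cdots S_k}}$; these are reciprocal on $\Delta_1^n$ since $x_{S_1}+x_{S_2\cdots S_k}=1$.
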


\begin{proof}
	Let $\mathbf{S}=(S_1,\ldots, S_k)$ be an ordered set partition of $\{1,\ldots, n\}$, and let us denote $\mathbf{S}^{(j)} = (S_j,S_{j+1},\ldots, S_{j-1})$.  It suffices to check pointwise that 
	$\varphi_{\mathbf{S}^{(1)}}(x)$ and $\varphi_{\mathbf{S}^{(2)}}(x)$ differ only by a phase.
	
	For each $(x_1,\ldots, x_n)\in \Delta_1^n$ we have
	\begin{eqnarray*}
		\varphi_{(S_1,\ldots, S_k)}(x_1,\ldots, x_n) & = & e_{S_1} + e^{2\pi i x_{S_2\cdots S_k}}e_{S_2} + e^{2\pi i x_{S_3\cdots S_k}}e_{S_2}+ \cdots e^{2\pi i x_{S_k}}e_{S_k}\\
		& = & e^{2\pi i x_{S_2\cdots S_k}}\left(e^{2\pi i x_{S_1}}e_{S_1} + e_{S_2} +e^{2\pi i x_{S_3\cdots S_kS_1}}e_{S_3}+ \cdots + e^{2\pi i x_{S_kS_1}}e_{S_k}\right)\\
		& = & e^{2\pi i x_{S_2\cdots S_k}}\left( e_{S_2} +e^{2\pi i x_{S_3\cdots S_kS_1}}e_{S_3}+ \cdots + e^{2\pi i x_{S_kS_1}}e_{S_k}+e^{2\pi i x_{S_1}}e_{S_1}\right)\\
		& = & e^{2\pi i x_{S_2\cdots S_k}}\varphi_{(S_2,\ldots, S_k,S_1)}(x_1,\ldots, x_n),
	\end{eqnarray*}
	which differs from $\varphi_{(S_2,\ldots, S_k,S_1)}(x_1,\ldots, x_n)$ only by the phase $e^{2\pi i x_{S_2\cdots S_k}}$, hence we have pointwise 
	$$	\overline{\varphi}_{(S_1,\ldots, S_k)}(x_1,\ldots, x_n)=\overline{\varphi}_{(S_2,\ldots, S_k,S_1)}(x_1,\ldots, x_n),$$
	and since $(S_1,\ldots, S_k)$ was arbitrary, the identity follows.
\end{proof}

It is natural for $\mathcal{O}^n$ to enumerate the set of degenerate cyclic orders of $n$ particles on the circle,
$$\left\{\lbrack S_1,\ldots, S_k\rbrack^\circledast: (S_1,\ldots, S_k) \text{ is a standard ordered set partition of } \{1,\ldots, n\}\right\},$$
by the number of blocks: the entry $T_{n,k}$ in the number triangle counts the number of cyclic configurations of $n$ colliding particles on the circle having $k$ distinct positions.  One easily finds O.E.I.S. sequence A028246, see also A053440:
$$T_{n,k} = S(n,k)(k-1)!,$$
where $S(n,k)$ is the Stirling number of the second kind.  The number triangle begins with
$$
\begin{array}{ccccccc}
1 & &  &  &  &  & \\
1 & 1 & & &  &  & \\
1 & 3 & 2 &  & &  &  \\
1 & 7 & 12 & 6 &  & & \\
1 & 15 & 50 & 60 & 24 &  &  \\
1 & 31 & 180 & 390 & 360 & 120 &\\
\end{array}
$$
Here the rows total to the necklace numbers, as in Proposition \ref{prop: graded dimension blades}.

\section{Concluding remarks}\label{sec: concluding remarks}
In this paper, we have studied a new factorization property for permutohedral honeycomb tessellations: using ring theoretic calculations with characteristic functions of blades, we proved that honeycomb tessellations are locally Minkowski sums of 2-dimensional permutohedral honeycombs.  We have also established a certain \textit{canonical} basis for graduated functions of blades.

We have only scratched the surface of what seems to be a vast subject.  Some comments are in order.

\begin{enumerate}
	\item For graduated functions of blades we prove linear independence for the canonical basis (in Theorem \ref{thm: cyclic sum canonical basis}), but we do not establish any factorization property.  Indeed, it turns out that due to the larger integer multiplicities on shared faces of the cyclic sum the simple factorization property does not hold for graduated blades.
	\item On the other hand, for characteristic functions of blades, which are $\{0,1\}$-valued, we prove the factorization property.   We leave all linear independence proofs to future work. 
	\item Further, we expect, but it was beyond the scope of the paper to prove, that graduated and characteristic functions of blades and span the same space.
		
	\vspace{.1in}
		\begin{center}
		\begin{tabular}{|c|c|c|}
			\hline
			& Basis? & Simple factorizations into tripods? \\ 
			\hline 
			$\{0,1\}$-valued, standard & Expected & \checkmark \\ 
			\hline 
			$\{0,1\}$-valued canonical & Expected & \checkmark \\ 
			\hline 
			$\bar{n}$-valued, standard & \checkmark & No \\ 
			\hline 
			$\bar{n}$-valued, canonical & \checkmark & No\\ 
			\hline 
		\end{tabular} 
	\end{center}

	\vspace{.1in}	
	\item Proving the closed formula for the general straightening relations for both characteristic and graduated functions of blades is beyond the scope of the present work we defer the exposition to future work.  Toric posets may be a relevant for this investigation (and more generally), see \cite{ToricOrders}.  
	
	However, modulo both characteristic functions of non-pointed and higher codimension cones the solution is manageable, particularly for the top-degree component.  In fact in this case it turns out that the straightening relations are combinatorially identical to the \textit{Kleiss-Kuijf} relations for the Parke-Taylor factors, see \cite{KK}, and have already been formulated graph-theoretically in Section 3.2 of \cite{Nonplanar}, which we can see by way of the isomorphism
	$$\Gamma_{i_1,\ldots, i_n}\mapsto  PT(i_1,\ldots, i_n),$$
	where 
	$$PT(i_1,\ldots, i_n) = \frac{1}{(x_{i_1}-x_{i_2})(x_{i_2} - x_{i_3})\cdots (x_{i_n}-x_{i_1})},$$
	$x_1,\ldots, x_n$ being complex variables.  Seeing that this is a homomorphism (in fact it is an isomorphism) is not difficult, but it partially involves structures studied in \cite{DeltaAlgebra2019} and we leave the proof to future work.  See also Example \ref{example: U(1) decoupling} for an analog of the $U(1)$-decoupling identity, for a functional representation of graduated functions of blades.  For related straightening relations, see the so-called canonicalization of pseudoinvariants in \cite{MafraSchlotterer2014}. 
	\item  As one can derive by simply counting multiplicities obtained by summing the characteristic functions of the blade $((1,2,3))$, see Figure \ref{fig:blade3coordinates0}, and its mirror image $((1,3,2))$, in dimension $\le 1$, the fundamental blade relations for characteristic functions take the form
	$$\Gamma_{1,2,3} + \Gamma_{1,3,2} = 1_{12} + 1_{23} + 1_{31} -1_{1}1_{2}1_{3}.$$
	On the other hand, for graduated functions (see Figure \ref{fig:blade-inverse-image-3-coords-2}) on $V_0^3$ the analogous expression takes the form
	$$\lbrack (1,2,3)\rbrack + \lbrack (1,3,2) \rbrack = \lbrack (1,23)\rbrack+ \lbrack (12,3)\rbrack + \lbrack (13,2)\rbrack - 1_{123} + 1_{1}1_{2}1_{3}.$$

	\begin{enumerate}
		\item Modulo characteristic functions of cones of \textit{codimension} $\ge 2$ in $V_0^3$, the fundamental blade relations take the form respectively	
		$$\Gamma_{1,2,3} + \Gamma_{1,3,2} = 1_{12} + 1_{23} + 1_{31}$$
		and
		$$\lbrack (1,2,3)\rbrack + \lbrack (1,3,2) \rbrack = \lbrack (1,23)\rbrack+ \lbrack (12,3)\rbrack + \lbrack (13,2)\rbrack - 1_{123}.$$
		\item Modulo characteristic functions of non-pointed cones, the fundamental blade relations take the form respectively
		$$\Gamma_{1,2,3} + \Gamma_{1,3,2} = -1_{1}1_{2}1_{3}.$$
		and
		$$\lbrack (1,2,3)\rbrack + \lbrack (1,3,2) \rbrack = 1_{1}1_{2}1_{3}.$$
		\item Modulo both we have antisymmetry, respectively
		$$\Gamma_{1,2,3} + \Gamma_{1,3,2} =0$$
		and
		$$\lbrack (1,2,3)\rbrack + \lbrack (1,3,2) \rbrack =0.$$
		One could compare these with the antisymmetry of the generator $v_{ijk}$ in the cohomology ring of Section \ref{sec: graded cohomology ring}.
	\end{enumerate}
	
\end{enumerate}

\vspace{.2in}

\section{Acknowledgements}
Many conversations with many people have contributed to this work.  We are grateful in particular to Adrian Ocneanu for many discussions during our graduate study at Penn State about permutohedral plates and blades and related topics and for encouraging us to explore and to develop our own approach.  We thank Freddy Cachazo, Darij Grinberg, Tamas Kalman, Donghyun Kim, Carlos Mafra, Sebastian Mizera, William Norledge, Oliver Schlotterer and Guoliang Wang for stimulating discussions.  We thank Nima Arkani-Hamed for discussions and for suggesting \cite{Nonplanar}, Pavel Etingof for pointing out \cite{EtingofVarchenko,Felder}, Alexander Postnikov for pointing out the connection to the Catalan matroid polytopes, and Victor Reiner for fruitful collaboration on the related paper \cite{EarlyReiner}.  We thank Massachusetts Institute of Technology for excellent working conditions while this paper was written.

\newpage

\appendix

\section{Proof by Donghyun Kim of generating function symmetry}\label{sec: Donghyun Kim proof}
Denote by $E_{k,\ell}$ the $k^\text{th}$ Eulerian number of the second kind.

\begin{proof}[Proof of Conjecture \ref{conjecture: higher codim blades generating function}, (1) and (3)]
	We show that the coefficients of the polynomial numerators of the generating functions for the diagonals of the array $T^{B}_{n,k}$ are symmetric, and they sum to the sequence $a(j) =\frac{(2j)!}{j!}$.

 We first derive a nonrecursive formula for the coefficients.
	
	The (unsigned ) Stirling number of the first kind and the Stirling number of the second kind have the following polynomial expression (See \cite{Graham Eulerian number reference}, p. 271, Equations 6.43 and 6.44),
	$$s(n,n-k)=\sum_{l=0}^{k-1}E_{k,l}\binom{n+l}{2k}$$
	$$S(n,n-k)=\sum_{l=0}^{k-1}E_{k,l}\binom{n+k-l-1}{2k}.$$
	
	Using those polynomial expressions, we have 
\begin{small}
		\begin{eqnarray*}
	&&T^{B}_{n,n-j}\\
	 &= &\sum_{k=0}^{j} S(n,n-j+k)s(n-j+k-1,n-j-1) \\
		&= &  \frac{1}{2}\left(\sum_{k=0}^{j} S(n,n-j+k)s(n-j+k-1,n-j-1) +S(n,n-k)s(n-k-1,n-j-1)\right)\\
		&=& \frac{1}{2}\left(\sum_{k=0}^{j} \left(\sum_{l=0}^{j-k-1}E_{j-k,l}\binom{n+j-k-l-1}{2(j-k)}\right)\left(\sum_{l=0}^{k-1}E_{k,l}\binom{n-j+k+l-1}{2k}\right)\right. \\
		&+&\left. \left(\sum_{l=0}^{k-1}E_{k,l}\binom{n+k-l-1}{2k}\right)\left(\sum_{l=0}^{j-k-1}E_{j-k,l}\binom{n-k+1-1}{2(j-k)}\right)\right) \\
		&=&\sum_{l_1,l_2}E_{j-k,l_1}E_{k,l_2}\left(\binom{n+j-k-l_1-1}{2(j-k)}\binom{n-j+k+l_2-1}{2k}+\binom{n+k-l_2-1}{2k}\binom{n-k-1+l_1}{2(j-k)}\right)
	\end{eqnarray*}
\end{small}	
	We will show that the coefficients of the polynomial numerators of the generating functions for 
\begin{small}
		$$\left(\binom{n+j-k-l_1-1}{2(j-k)}\binom{n-j+k+l_2-1}{2k}+\binom{n+k-l_2-1}{2k}\binom{n-k-1+l_1}{2(j-k)}\right)$$ are symmetric. 
\end{small}
	
	There exist constants $A_m$ such that  
	\begin{small}
		$$\binom{n+j-k-l_1-1}{2(j-k)}\binom{n-j+k+l_2-1}{2k}=\sum_{m} A_m \binom{n+m}{2j}.$$
	\end{small}
	This is an identity as a polynomial in $n$, so plugging in $(-n+j+1)$ for $n$, we have, 
\begin{small}
		$$\binom{-n+2j-k-l_1}{2(j-k)}\binom{-n+k+l_2}{2k}=\sum_{m} A_m \binom{-n+m+j+1}{2j}.$$
\end{small}
	
	This is equivalent to 
\begin{small}
		$$\binom{n-k+l_1-1}{2(j-k)}\binom{n+k-l_2-1}{2k}=\sum_{m} A_m \binom{n+j-2-m}{2j}.$$
\end{small}
	So we have, 
\begin{small}
		\begin{equation*} 
	\begin{split}
	&\binom{n+j-k-l_1-1}{2(j-k)}\binom{n-j+k+l_2-1}{2k}+\binom{n+k-l_2-1}{2k}\binom{n-k-1+l_1}{2(j-k)} \\&=\sum_{m} A_m \binom{n+m}{2j}+\sum_{m} A_m \binom{n+j-2-m}{2j}\\
	& =\sum_{m} (A_m+A_{j-2-m}) \binom{n+m}{2j}.
	\end{split}
	\end{equation*}
\end{small}
	
	Thus the polynomial numerators of the generating functions for
	$$\left(\binom{n+j-k-l_1-1}{2(j-k)}\binom{n-j+k+l_2-1}{2k}+\binom{n+k-l_2-1}{2k}\binom{n-k-1+l_1}{2(j-k)}\right)$$
	have the form
	$$(A_0+A_{j-2})+(A_1+A_{j-3})x+\cdots+(A_{j-3}+A_1)x^{j-3}+(A_{j-2}+A_0)x^{j-2},$$
	which has symmetric coefficients. 
	
	Note that $T^{B}_{n,n-j}$ is a degree $2j$ polynomial in $n$; let us denote by $c_{2j}$ the coefficient of $x^{2j}$. Then the sum of the coefficients of polynomial numerator of the generating function for $T^{B}_{n,n-j}$ is $(2j)! c_{2j}$.
	
	As the Eulerian numbers of the second kind sum up to $(2k-1)!!$ for each $k$, the coefficients of the leading term for $S(n,n-k)$ and $s(n,n-k)$ are both $\frac{(2k-1)!!}{(2k)!}$. So we have, 
	
	$$c_{2j}=\sum_{k=0}^{j}\frac{(2(j-k)-1)!!}{(2(j-k))!}\frac{(2k-1)!!}{(2k)!}=\frac{1}{(2j)!}\sum_{k=0}^{j}\binom{2j}{2k}(2(j-k)-1)!!(2k-1)!!.$$
	
	Now $\sum_{k=0}^{j}\binom{2j}{2k}(2(j-k)-1)!!(2k-1)!!$ counts the number of bicolored (black and white) perfect matching on $2j$ elements, as we first pick the $2k$ elements which we will match with black edges and $(2(j-k)-1)!!(2k-1)!!$ will count number of perfect matchings for each color. So we have, 
	
	$$(2j)!c_{2j}=2^{j}(2j-1)!!=\frac{(2j)!}{j!}.$$
	
\end{proof}

\section{Symmetries of a leading singularity}\label{sec:Leading singularity}

In Theorem \ref{thm: cyclic sum canonical basis} we saw that factorizations of characteristic functions of blades correspond to the triangulations of a cyclically-oriented polygon.  It is natural to ask about products which do \textit{not} correspond to triangulations, but rather to graphs embedded in higher dimensional objects; indeed, it turns out that this case is similar to something in the scattering amplitudes literature known as a \textit{leading singularity}.  Further, the examples and discussion which follows suggests a new class of identifications for non-planar on-shell diagrams beyond the well-known square move which deserves further study, see \cite{DeltaAlgebra2019} where the study was initiated.

While our computations rely on relations which hold in the cohomology ring $\mathcal{U}^n$, namely $v_{ijk} = -v_{ikj}$, $v_{ijk}^2=0$ and $v_{ijk}v_{ik\ell}+v_{ik\ell}v_{i\ell j} + v_{i\ell j} v_{ijk}=0$, it makes sense to ask whether the leading singularities deform manageably when the formal generator $v_{ijk}$ is ``replaced'' with the characteristic function $\gamma_{i,j,k}$.  The practical difficulty is that the above relations on the $v_{ijk}$'s are degenerations of the relations on the $\gamma_{i,j,k}$, so there are many more moving parts.  For example, the relation $v_{123} + v_{132}=0$ now deforms to the fundamental identity
$$\gamma_{1,2,3} + \gamma_{1,3,2} = 1_{12} + 1_{23} + 1_{31} - 1.$$
However, in the algebra $\mathcal{V}^n$ the relations are very simple, and due to the nilpotence of the generators $v_{ijk}^2=0$ one has the exponential map.

\begin{figure}[h!]
	\centering
	\includegraphics[width=0.7\linewidth]{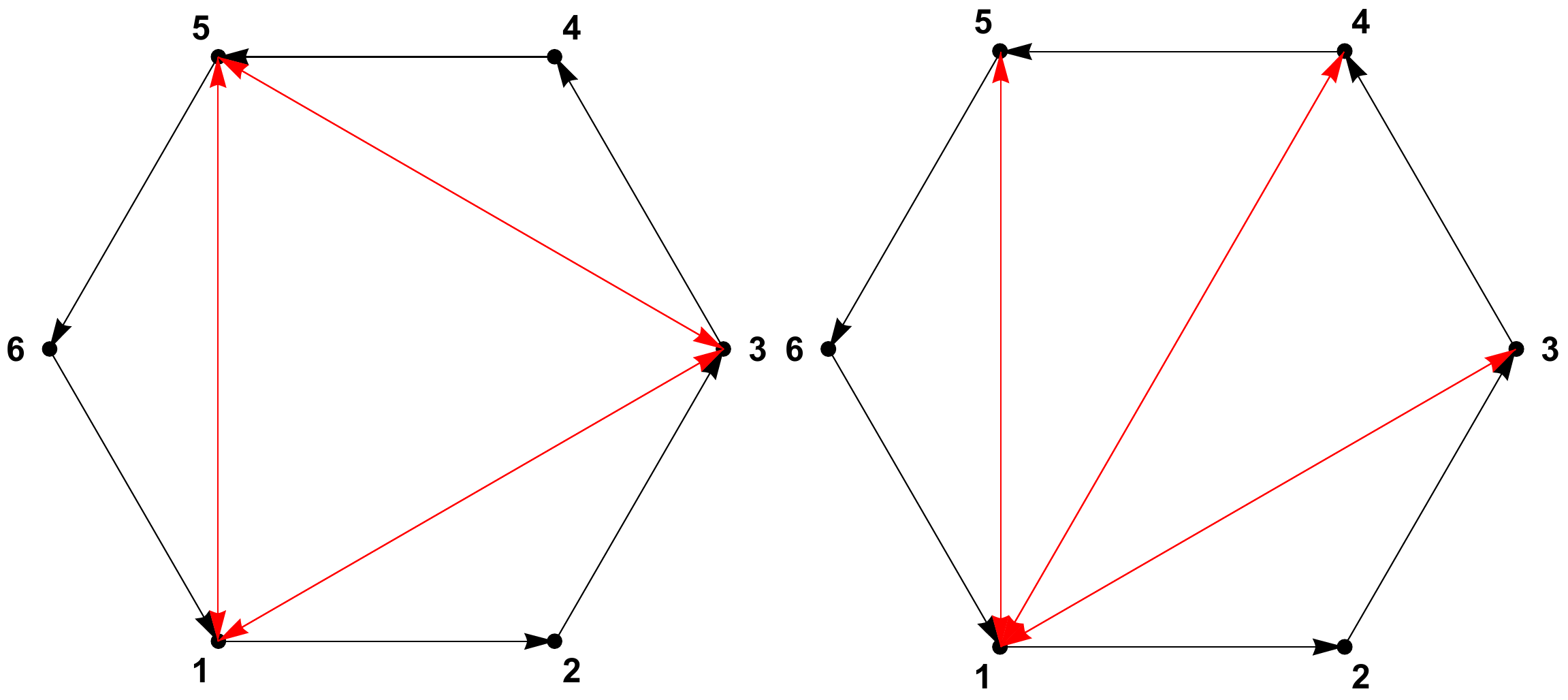}
	\caption{Triangulation change: $$\{(1,2,3),(3,4,5),(5,6,1),(1,3,5)\} \Leftrightarrow \{(1,2,3),(1,3,4),(1,4,5),(1,5,6)\}.$$
	Internal arrows overlap in opposite directions and cancel (triangulation succeeds), see Example \ref{example: 6-particle standard}.}
	\label{fig:triangulationchange6}
\end{figure}

\begin{example}\label{example: 6-particle standard}
	The simplest example of a triangulation of a polygon that is not a flag occurs for a hexagon at $n=6$, see Figure \ref{fig:triangulationchange6}.  This central triangulation corresponds to the coefficient of $t^4$ in the following expression:
	\begin{eqnarray*}
		v_{123} v_{345}v_{561}v_{135} & = & \text{coeff}_{t^{4}}\left((1+tv_{123})(1+tv_{345})(1+tv_{561})(1+tv_{135})\right)\\
		& = &\text{coeff}_{t^{4}}((1+tv_{123})(1+tu_{13})(1+tv_{345})(1+tu_{35})(1+tv_{561})(1+tu_{15}))\\
		& = & \text{coeff}_{t^{4}}\left((1+tu_{12})(1+tu_{23})(1+tu_{34})(1+tu_{45})(1+tu_{56})(1+tu_{61})\right)\\
		& = & \text{coeff}_{t^4}\left((1+tv_{123})(1+tv_{134})(1+tv_{145})(1+tv_{156})\right)\\
		& = & v_{123}v_{134}v_{145}v_{156},
	\end{eqnarray*}
	having used 
	$$(1+tv_{abc}) = (1+tu_{ab})(1+tu_{bc})(1+tu_{ca}),$$
	or, directly, the flip move $v_{135}v_{345} = v_{134}v_{145}$, as also holds for characteristic functions of blades, $\gamma_{135}\gamma_{345} = \gamma_{134}\gamma_{145}$.
	On the other hand, we have the interesting product which does \textit{not} correspond to a triangulation, but which \textit{does} have meaning as a certain \textit{leading singularity}, as seen in \cite{Nonplanar}.  In the derivation we shall make use of the identities $\exp\left(tu_{ij}\right) = 1+tu_{ij}$ and $\exp\left(tv_{ijk}\right) = 1+tv_{ijk}$, as $u_{ij}^2=0$ and $v_{ijk}^2=0$.  
	
	Then, for the set of triples $\{(1,2,3),(3,4,5),(5,6,1),(2,6,4)\}$ we have
\begin{eqnarray*}
	&&(1+v_{123}) (1+v_{345})(1+v_{561})(1+v_{264}) =  \exp\left((v_{123}+v_{345}+v_{561}+v_{264})\right)\\
	& = & \exp\left(u_{12}+u_{23}+u_{31}+u_{34}+u_{45}+u_{53}+u_{56}+u_{61}+u_{15}+u_{26}+u_{64}+u_{42}\right)\\
	& = & \exp\left(u_{12}+u_{23}+u_{34}+u_{56}+u_{61}\right)\exp\left(v_{153}+v_{264}\right).
\end{eqnarray*}
	\begin{figure}[h!]
		\centering
		\includegraphics[width=0.45\linewidth]{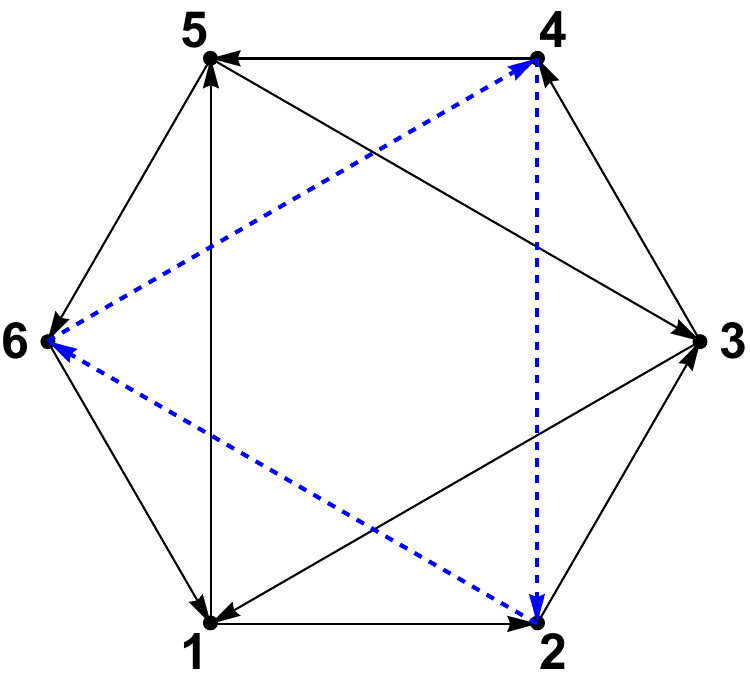}
		\caption{No internal edge cancellation occurs for the leading singularity of Example \ref{example: 6-particle standard}: $\{(1,2,3),(3,4,5),(5,6,1),(2,6,4)\}$.  For further discussion see \cite{DeltaAlgebra2019}.}
		\label{fig:hexagonorientedtriangulationnonplanar}
	\end{figure}
		Here each $u_{ij}$ corresponds to an oriented edge, as in Figure \ref{fig:hexagonorientedtriangulationnonplanar}.
\end{example}
	It is easy to check that the expansion here is invariant under rotation by the cycle $(123456)$, which would not be obvious from the factorization into triples.  This suggests that the existence of new kinds of moves, coming from higher-dimensional structures, see \cite{DeltaAlgebra2019}.

\section{Combinatorial scattering equations and balanced graphs}\label{sec: combinatorial scattering equations}

In this somewhat speculative section, let us look toward scattering amplitudes for inspiration and future work, specifically toward the one-loop worldsheet functions from \cite{NewBCJ}.  The story began in \cite{Cachazo Song Yuan}, where the \textit{scattering equations} were introduced.  These are a system of $n$ highly nonlinear equations in the $n$ complex variables $\sigma_1,\ldots, \sigma_n$:
$$\sum_{b\not=a} G_{ab}=0$$
for each $a=1,\ldots, n$, where we define $G_{ab}=\frac{s_{ab}}{\sigma_a-\sigma_b}.$  

Above the numerators $s_{ab} = s_{ba}$ for $1\le a<b\le n$ are known as the generalized Mandelstam invariants; for the present purposes, we may assume that they are complex numbers.  Remark that we regard these only as combinatorial objects.  We shall not here think about the usual linear fractional action of the gauge group $SL_2$ on the variables $\sigma_a$.

Remark that the functions $G_{ab}$ are obtained as limits of Kronecker-Eisenstein series, see \cite{Kronecker} and \cite{Zagier}.  In the context of scattering amplitudes, a good starting point would be \cite{MafraSchlotterer} and the references therein.  The same functions arise in elliptic solutions to the Classical Dynamical Yang-Baxter equation, see \cite{Felder} and \cite{EtingofVarchenko} and the references therein.  

The purely combinatorial approach which motivates this section is inspired in part by Appendix A of \cite{NewBCJ}, where the \textit{one-loop} analog of the scattering equations were in effect considered to generate an ideal in a certain ring of one-loop worldsheet functions.  That is, one considers the ring generated formally by the functions $G_{ab}$ and mods out by the ideal generated by the one-loop scattering equations, see \cite{LoopParkeTaylor}).

Our aim here is to study a combinatorial interpretation of the quotient ring from \cite{NewBCJ}: it turns out that this quotient ring is combinatorially temptingly close to (a filtered analog of) $\mathcal{V}^n$, and thus to blades.  It is interesting to note that a combinatorially similar quotient ring was studied in \cite{MoseleyProudfoot}, where it was conjectured to be isomorphic to the cohomology ring of the configuration space $X_n$ of $n$ distinct points in $SU(2)$, modulo the simultaneous action of $SU(2)$.  See Section \ref{sec: graded cohomology ring} above.

As we saw in Example \ref{example: Vn calculation n=4}, the exponential map can be used to efficiently encode the combinatorial structure of the $k$-skeleta of a blade uniquely from its 1-skeleton.  But is there a criterion to determine when a given polynomial is in the subalgebra $\mathcal{V}^n$ of $\mathcal{U}^n$?  We give a partial answer which consists of a set of linear relations on the coefficients of the argument of the exponential map $\exp:(\mathcal{U}^n)_1\rightarrow\mathcal{U}^n$, which restricts nicely as $\exp:(\mathcal{V}^n)_1\rightarrow\mathcal{V}^n$.  Clearly any monomial in the $v_{ijk}$ is the leading order coefficient of an exponential map, suggesting the possibility that the result could be extended.

\begin{prop}\label{prop:Combinatorial scattering equations}
	Let constants $m_{ij}\in\mathbb{C}$, $1\le (i\not=j)\le n$ be given; define $\alpha_{ij} = m_{ij}-m_{ji}$.  Then, the product 
	$$\prod_{1\le (i\not=j)\le n}(1+u_{ij})^{m_{ij}}\in\mathcal{U}^n$$
	is in the subalgebra $\mathcal{V}^n$ if and only if the constants $\alpha_{ij}$ satisfy what we call the \emph{combinatorial scattering equations},
	\begin{eqnarray*}
		\alpha_{12} + \alpha_{13} + \cdots + \alpha_{1n} & = & 0\\
		\alpha_{21} +\alpha_{23} + \cdots + \alpha_{2n} & = & 0\\
		\alpha_{31} +\alpha_{32} + \cdots + \alpha_{3n} & = & 0\\
		& \vdots &\\
		\alpha_{n1} + \alpha_{n2} + \cdots + \alpha_{n\ n-1} &  = &0. 
	\end{eqnarray*}
\end{prop}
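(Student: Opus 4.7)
The plan is to reduce the product to an exponential, then use graded-subalgebra considerations together with a classical fact about balanced 1-chains on the complete graph.

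\textbf{Step 1: Simplify the product.} Because $u_{ij}^2 = 0$ in $\mathcal{U}^n$, the binomial series collapses and $(1+u_{ij})^{m_{ij}} = 1 + m_{ij}u_{ij}$ even for complex $m_{ij}$. Since $\mathcal{U}^n$ is commutative and $u_{ji} = -u_{ij}$, I would pair the factors for each unordered pair:
$$(1+u_{ij})^{m_{ij}}(1+u_{ji})^{m_{ji}} = (1+m_{ij}u_{ij})(1-m_{ji}u_{ij}) = 1 + \alpha_{ij}u_{ij},$$
using $u_{ij}^2=0$ once more. Hence
$$P := \prod_{i\ne j}(1+u_{ij})^{m_{ij}} = \prod_{i<j}(1 + \alpha_{ij}u_{ij}).$$

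\textbf{Step 2: Pass to the exponential.} As the $u_{ij}$ pairwise commute and each squares to zero, $\log(1+\alpha_{ij}u_{ij}) = \alpha_{ij}u_{ij}$ (the log series also collapses) and so
$$P = \exp(w), \qquad w = \sum_{i<j}\alpha_{ij}u_{ij} \in (\mathcal{U}^n)_1.$$
The series is in fact a finite sum since $\mathcal{U}^n$ is nilpotent in positive degree.

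\textbf{Step 3: Graded-subalgebra argument.} The generators $v_{ijk}$ are homogeneous of degree $1$, so $\mathcal{V}^n$ is a graded subalgebra of $\mathcal{U}^n$. If $w \in (\mathcal{V}^n)_1$, then $\exp(w)\in\mathcal{V}^n$ because each $w^k/k!$ lies in the subalgebra generated by $w$. Conversely, if $P = \exp(w)\in\mathcal{V}^n$, its degree-$1$ component, which is exactly $w$, must lie in $(\mathcal{V}^n)_1$. Thus the problem reduces to characterizing when $w \in \operatorname{span}\{v_{ijk}: i,j,k\text{ distinct}\}$.

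\textbf{Step 4: Graph-homological characterization.} View $(\mathcal{U}^n)_1$ as the space of $1$-chains on the complete graph $K_n$, with $u_{ij}$ identified with the oriented edge $i\to j$ (consistent with $u_{ji}=-u_{ij}$), and $v_{ijk}$ with the oriented triangle $i\to j\to k\to i$. The boundary map $\partial\colon C_1(K_n)\to C_0(K_n)$ sends $u_{ij}\mapsto [j]-[i]$; triangles $v_{ijk}$ are visibly in $\ker\partial$. Conversely, it is standard (e.g.\ from the fact that the 2-skeleton of the $(n-1)$-simplex is simply connected) that the triangle boundaries span all of $\ker\partial$, giving
$$(\mathcal{V}^n)_1 = \ker\partial.$$
A direct computation, using the antisymmetric extension $\alpha_{ji}=-\alpha_{ij}$, yields
$$\partial w = -\sum_{i=1}^n\Bigl(\sum_{j\ne i}\alpha_{ij}\Bigr)[i],$$
so $w\in(\mathcal{V}^n)_1$ if and only if $\sum_{j\ne i}\alpha_{ij}=0$ for each $i=1,\ldots,n$. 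Combined with Steps 1--3, this is exactly the combinatorial scattering equations.

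\textbf{Main obstacle.} Steps 1--3 are essentially bookkeeping. The content of the proof lies in Step 4, specifically the assertion $(\mathcal{V}^n)_1 = \ker\partial$. The $\subseteq$ inclusion is immediate; the $\supseteq$ inclusion is a standard dimension count ($\dim\ker\partial = \binom{n}{2}-(n-1) = \binom{n-1}{2}$, and one can exhibit $\binom{n-1}{2}$ explicitly independent triangles, e.g.\ $\{v_{1,i,j}: 2\le i<j\le n\}$), but one must be careful in presenting it cleanly. The ``Jacobi''-type relation $v_{ijk}-v_{ij\ell}+v_{ik\ell}-v_{jk\ell}=0$ explains the kernel of the spanning map and could be used to make the dimension count entirely concrete.
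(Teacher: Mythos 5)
Your proof is correct, and it reaches the conclusion by a genuinely different route from the paper. After the shared reduction to $P=\exp(w)$ with $w=\sum_{i<j}\alpha_{ij}u_{ij}$, the paper proceeds representation-theoretically: it writes down the $\symm_n$-decomposition $(\mathcal{U}^n)_1\simeq V_{(n-1,1)}\oplus V_{(n-2,1,1)}$ and uses the explicit projection formula $u_{ij}=\frac{1}{n}(z_i-z_j)+\frac{1}{n}\sum_{k\neq i,j}v_{ijk}$ (with $z_i=\sum_{j\neq i}u_{ij}$) to read off that $w$ lands in the $v$-summand exactly when each $\sum_{k\neq i}\alpha_{ik}$ vanishes. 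You instead identify $(\mathcal{U}^n)_1$ with $C_1(K_n)$, observe $(\mathcal{V}^n)_1\subseteq\ker\partial$, and close the gap either via acyclicity of the $2$-skeleton of the $(n-1)$-simplex or a dimension count; the scattering equations then appear as the vanishing of $\partial w$. The two arguments are the same fact in two languages (your $\partial$ is, up to sign and a basis change, the projection onto the paper's $V_{(n-1,1)}$ summand), but your homological version avoids the explicit averaging formula and is cleaner to verify. You also make explicit a step the paper glosses over — that $\exp(w)\in\mathcal{V}^n$ iff $w\in(\mathcal{V}^n)_1$, which needs the observation that $\mathcal{V}^n$ is a graded subalgebra because it is generated in degree $1$ — which is a worthwhile clarification.
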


\begin{proof}
	First note that $u_{ij}^2=0$ implies the identity
	\begin{eqnarray*}
		\prod_{1\le (i\not=j)\le n}(1+u_{ij})^{m_{ij}} & = & \prod_{1\le i<j\le n}\exp\left(\alpha_{ij}u_{ij}\right)\\
		& = & \exp\left(\sum_{1\le i<j\le n} \alpha_{ij} u_{ij}\right).
	\end{eqnarray*}	
	Now, the linear span of all the $u_{ij}$'s decomposes into a direct sum of two irreducible symmetric group representations: 
	$$(\mathcal{U}^n)_{(1)}\simeq V_{(n-1,1)} \oplus V_{(n-2,1,1)},$$
	with spanning sets respectively
	$$\left\{z_i: i=1,\ldots, n\right\}\ \text{ and } \left\{v_{ijk} = u_{ij} + u_{jk} + u_{ki}: 1\le i<j<k\le n\right\},$$
	and bases
	$$\left\{z_i: i=1,\ldots, n-1\right\}\ \text{ and } \left\{v_{1jk} = u_{1i} + u_{jk} + u_{k1}: 2\le j<k\le n\right\},$$
	say, where we define $z_i=\sum_{j\not=i}u_{ij}.$
	
	It is easy to see that the combinatorial scattering equations express the condition on the coefficients $\alpha_{ij}$ for a linear combination
	$$\sum_{1\le i<j\le n} \alpha_{ij}u_{ij}$$
	to be in $V_{(n-2,1,1)}$.  Indeed, in light of the decomposition
	$$u_{ij} = \frac{1}{n}\left(z_i-z_j\right) + \frac{1}{n}\left(\sum_{k\not=i,j}v_{i,j,k}\right)\in V_{(n,1,1)}\oplus V_{(n-2,1,1)},$$
	we have
	\begin{eqnarray*}
		\sum_{1\le i<j\le n} \alpha_{ij}u_{ij} & = & \frac{1}{n}\sum_{1\le i<j\le n}\alpha_{ij} \left(z_i-z_j + \sum_{k\not= i,j} v_{ijk}\right)\\
		& = & \frac{1}{n} \sum_{i=1}^n\left(\sum_{k\not=i} \alpha_{i,k}\right)z_i +  \frac{1}{n}\sum_{1\le i<j\le n}\alpha_{ij} \left(\sum_{k\not= i,j} v_{ijk}\right),
	\end{eqnarray*}
	where we have used $\alpha_{ij} = -\alpha_{ji}$.  Evidently, this is in $\mathcal{V}^n$ if and only if all coefficients in the first term,
	$$\frac{1}{n} \left(\sum_{k\not=i} \alpha_{ik}\right),$$
	vanish for each $i$.  This set coincides with exactly the combinatorial scattering equations for the $\alpha_{ij}$.
\end{proof}

Given any element 
$$C\in \mathcal{L}_{\mathcal{U}^n}=\{-1,0,1\}^{\binom{n}{2}},$$
we have
$$\exp\left(\sum_{1\le i<j\le n}C_{ij}u_{ij}\right)\in\mathcal{U}^n.$$
In particular, any canonical basis element for $\mathcal{U}^n$, (or $\mathcal{V}^n$), is obtained as the coefficient of the highest power of $t$ of $\exp\left(t\sum_{(i,j)\in C}u_{ij}\right)\in\mathcal{U}^n$
for some $C\in\{-1,0,1\}^{\binom{n}{2}}$.  It seems tempting to interpret the combinatorial scattering equations as describing the restriction (of $\exp$) to an $\binom{n-1}{2}$-dimensional subspace of $(\mathcal{U}^n)_1$.  In particular, for integer-valued weights, we obtain inside the subspace an $\binom{n-1}{2}$-dimensional sublattice $\mathcal{L}_{\mathcal{V}^n}\subset \mathcal{L}_{\mathcal{U}^n}$.  Thus, of particular interest for scattering amplitudes are elements $C\in \mathcal{L}_{\mathcal{V}^n}$ such that $\sum_{(i,j)\in C}u_{ij} = \sum_{a=1}^{n-2} v_{r_as_at_a}$ for some list of triples $\{(r_1,s_1,t_1),\ldots, (r_{n-2},s_{n-2},t_{n-2})\}$.  

There appears to be some graph-theoretic machinery at hand.  We now prove that to each balanced weighted graph (in the sense of \cite{LamPostnikov}) there exists an element of the algebra $\mathcal{V}^n$.

Let $\mathcal{G}^n$ be an unoriented graph on $n$ vertices $1,\ldots, n$.  Let us choose edge orientations $(i_1\rightarrow j_1),\ldots, (i_\ell\rightarrow j_\ell)$ such that $i_a<j_a$ for all $a=1,\ldots, \ell$.

\begin{defn}
	A graph $\mathcal{G}^n$, with each edge $i\rightarrow j$ equipped with a flow $m_{ij}$ (which could be in $\mathbb{C}$) from $i$ to $j$, for $1\le i\not =j\le n$, is said to be  \textit{balanced} provided that the net flux at every vertex is zero.  That is, for each $a\in\{1,\ldots, n\}$, the total flow entering $a$ is the same as the total flow leaving $a$:
	$$\sum_{b:\ b\not=a} m_{ab} = \sum_{b:\ b\not=a} m_{ba},$$
	or
	$$\sum_{b:\ b\not=a} (m_{ab}-m_{ba}) = 0.$$
\end{defn}
We recover immediately at a graph theoretic form of the combinatorial scattering equations.

\begin{cor}
	Let $\mathcal{G}^n$ be a graph on vertices $\{1,\ldots, n\}$, with flow $m_{ij}$ on the edge $i\rightarrow j$ for all distinct $i,j\in\{1,\ldots, n\}$.
	
	If $\mathcal{G}$ is balanced then the product
	$$\prod_{1\le (i\not=j)\le n}(1+u_{ij})^{m_{ij}}$$
	is in the subalgebra $\mathcal{V}^n$.
\end{cor}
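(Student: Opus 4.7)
The plan is to apply Proposition \ref{prop:Combinatorial scattering equations} directly, recognizing that the balancedness condition on $\mathcal{G}^n$ is a graph-theoretic repackaging of the combinatorial scattering equations on the alternating differences $\alpha_{ij} := m_{ij}-m_{ji}$. First I would adopt the convention that $m_{ij}=0$ whenever $(i,j)$ is not an edge of $\mathcal{G}^n$; this makes the product $\prod_{1\le i\neq j\le n}(1+u_{ij})^{m_{ij}}$ well-defined in $\mathcal{U}^n$ (using $(1+u_{ij})^{m_{ij}}=1+m_{ij}u_{ij}$, which is legitimate for any $m_{ij}\in\mathbb{C}$ since $u_{ij}^2=0$), and leaves the balance equations unchanged.

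Next I would show that the two systems of equations coincide vertex-by-vertex. For each $a\in\{1,\ldots,n\}$, the balanced condition reads
$$\sum_{b\neq a} m_{ab} \;=\; \sum_{b\neq a} m_{ba},\qquad \text{equivalently}\qquad \sum_{b\neq a}(m_{ab}-m_{ba}) \;=\; \sum_{b\neq a}\alpha_{ab}\;=\;0,$$
which is exactly the $a$th combinatorial scattering equation appearing in Proposition \ref{prop:Combinatorial scattering equations}. Running this over $a=1,\ldots,n$ identifies the $n$ balance equations with the $n$ scattering equations, and the cited Proposition then yields $\prod_{1\le i\neq j\le n}(1+u_{ij})^{m_{ij}}\in \mathcal{V}^n$, completing the proof.

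There is essentially no substantive obstacle here: the corollary is a direct reformulation of Proposition \ref{prop:Combinatorial scattering equations} once one identifies the antisymmetric part of the weight matrix $(m_{ij})$ with a divergence-free flow on the complete graph, supported on the edge set of $\mathcal{G}^n$. The only point worth flagging as a sanity check (not as a step of the proof) is a dimension count: summing all the combinatorial scattering equations yields $\sum_{i\neq j}\alpha_{ij}=0$, which holds automatically from $\alpha_{ji}=-\alpha_{ij}$, so the system cuts out a subspace of codimension $n-1$ in the antisymmetric part of $(\mathcal{U}^n)_1$, matching the dimension $\binom{n-1}{2}$ of the span of $\{v_{1jk}\}_{2\le j<k\le n}$ inside $\mathcal{V}^n$.
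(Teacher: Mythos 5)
Your proof is correct and takes essentially the same route as the paper: identify $\alpha_{ij}=m_{ij}-m_{ji}$, observe that the balance condition at each vertex is verbatim the $a$th combinatorial scattering equation, and invoke Proposition \ref{prop:Combinatorial scattering equations}. The extra remarks (the convention $m_{ij}=0$ off the edge set, the identity $(1+u_{ij})^{m_{ij}}=1+m_{ij}u_{ij}$ from nilpotence, and the dimension sanity check) are harmless elaborations of the same argument.
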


\begin{proof}Suppose that $\mathcal{G}^n$ is balanced.  Then for the product
	$$\prod_{1\le (i\not=j)\le n}(1+u_{ij})^{m_{ij}}=\prod_{1\le i<j\le n}(1+(m_{ij}-m_{ji}) u_{ij}) = \exp\left(\sum_{1\le i<j\le n} (m_{ij}-m_{ji})u_{ij}\right),$$
	at each vertex $a=1,\ldots, n$ we have the total flux
	$$\sum_{b:\ b\not=a} (m_{ab}-m_{ba}) = \sum_{b:\ b\not=a} \alpha_{ab}=0.$$
	These are the combinatorial scattering equations with coefficients $\alpha_{ab} = m_{ab} - m_{ba}$.
\end{proof}

Of course, one can easily see that the same will be true for the graph of any leading singularity.

\begin{example}
	Assigning the weight $m_{ij}=+1$ to each directed edge $(i,j)$, then the graphs in Figures \ref{fig:triangulationchange6} and \ref{fig:hexagonorientedtriangulationnonplanar} are all balanced!
\end{example}
\section{Connecting with the $\Delta$-algebra}
Fix an integer $n\ge 3$.

Denote by $W^{n+n}$ the ($2n$-dimensional) the complex vector space which has basis the (anticommuting) Grassmann variables 
$$\theta_1,\ldots, \theta_n,\chi_1,\ldots, \chi_n.$$
In this section we show that $\mathcal{V}^n$ is isomorphic to the $\Delta$-algebra from \cite{DeltaAlgebra2019}, which is an $n$-parameter subalgebra of the Grassmann algebra $\mathcal{G}^n$ generated by the variables $\theta_1,\ldots, \theta_n,\chi_1,\ldots,\chi_n$.

Define 
$$W_0^{n+n} = \left\{\sum_{i=1}^n (\alpha_i\theta_i) + \sum_{i=1}^n (\beta_i\chi_i)\in W^{n+n}: \sum_{i=1}^n \alpha_i =0,\text{ and } \sum_{i=1}^n\beta_i =0\right\}.$$

For $$(x_1,\ldots, x_n)\in \{x\in\mathbb{C}^n: x_i\not=x_j\text{ for } i\not=j \},$$
define 
$$u_{ij} = \frac{\theta_{ij}\chi_{ij}}{x_{ij}} = \frac{(\theta_{i}-\theta_j)(\chi_i-\chi_j)}{x_i-x_j}.$$
Let $\mathfrak{U}^n \subset\mathcal{G}^n$ be the subalgebra generated by the elements $u_{ij}$ for $1\le i<j\le n$.
\begin{rem}\label{rem: top form}
	There is a unique (up to a scalar multiple) antisymmetric tensor of degree $2n-2$ on $W_0^n$, given by
	$$\prod_{\ell=1}^{n-1}(\theta_{\ell n}\chi_{\ell n}) = \prod_{\ell=1}^{n-1}(\theta_{\ell} - \theta_n)(\chi_{\ell}-\chi_n) = \prod_{\ell=1}^{n-1}(\theta_{\ell} - \theta_{\ell+1})(\chi_{\ell}-\chi_{\ell+1}),$$
	where $\theta_{ij} = \theta_i-\theta_j$ and $\chi_{ij} = \chi_i-\chi_j$.
	
	Note that while the \textit{factorization} is however not unique, the product $\prod_{\ell=1}^{n-1}(\theta_{\ell n}\chi_{\ell n})$ is \textit{symmetric} under simultaneous label permutations, that it is invariant with respect to the diagonal group embedding  $\symm_n\hookrightarrow \symm_n\times\symm_n$.
\end{rem}
\begin{cor}
	The algebra $\mathfrak{U}^n$ is a (faithful) representation of $\mathcal{U}^n$.
\end{cor}

\begin{proof}
	It is a now standard result, see the work of Orlik-Terao \cite{OrlikTerao}, that the (square-free part of the) algebra of reciprocals of linear forms $\frac{1}{x_i-x_j}$ has basis the nbc basis, as well as the \textit{canonical} from \cite{EarlyReiner}, see also Proposition \ref{prop:canonical Basis algebra} above.  It remains only to note that the relations from Definition \ref{defn: cohomology ring SU2} for $\mathcal{U}^n$ also hold for $\mathfrak{U}^n$.  Indeed, clearly
	$$u_{ij}^2 = \frac{(\theta_{ij}\chi_{ij})(\theta_{ij}\chi_{ij})}{x_{ij}^2}=0$$
	due to antisymmetry.  Commutativity of the $u_{ij}$'s follows since each $u_{ij}$ has even Grassmann degree. Moreover, by Remark \ref{rem: top form} we have
	$$u_{ij}u_{jk} + u_{jk} u_{ki} + u_{ki}u_{ij} = \left(\theta_{ij}\chi_{ij}\theta_{jk}\chi_{jk}\right)\left(\frac{1}{x_{ij}}\frac{1}{x_{jk}} + \frac{1}{x_{jk}}\frac{1}{x_{ki}}+\frac{1}{x_{ki}}\frac{1}{x_{ij}}\right)=0,$$
	where the last equality is a well-known (and easily verified) rational function identity.
	
\end{proof}

\end{document}